\documentclass[11pt,letter]{article}

\usepackage[margin=1in]{geometry}

\usepackage{dsfont, amssymb,amsmath,amscd,latexsym, amsthm, amsxtra,amsfonts}

\usepackage[dvipsnames,svgnames]{xcolor}

\usepackage{lineno}


\usepackage[active]{srcltx}

\usepackage{tikz}
\usepackage{natbib}
\usepackage{bbm}
\usepackage{enumerate}
\usepackage{mathrsfs}

\usepackage{comment}         
\usepackage{cases}
\usepackage{circuitikz}

\usepackage{subcaption} 

\usepackage{verbatim}
\usepackage{graphicx}
\usepackage{epstopdf}
\usepackage{bm}
\usetikzlibrary{automata,arrows,positioning,calc}
\usepackage[pdfstartview=FitH, bookmarksnumbered=true,bookmarksopen=true, colorlinks=true, pdfborder=001, citecolor=blue, linkcolor=blue,urlcolor=blue]{hyperref}
\numberwithin{equation}{section}
\usepackage{graphics}
\usepackage{booktabs}
\usepackage{algorithm,algorithmic}

\usepackage{mathtools}
\mathtoolsset{showonlyrefs=true}

\allowdisplaybreaks

\newtheorem{theorem}{Theorem}[section]
\newtheorem{corollary}[theorem]{Corollary}
\newtheorem{proposition}[theorem]{Proposition}
\newtheorem{lemma}[theorem]{Lemma}

\theoremstyle{definition}
\newtheorem{definition}[theorem]{Definition}

\theoremstyle{definition}
\newtheorem{example}{Example}
\theoremstyle{definition}
\newtheorem{remark}{Remark}
\theoremstyle{definition}
\newtheorem{assumption}{Assumption}

\usepackage{amsmath}
\DeclareMathOperator*{\argmax}{arg\,max}

\newcommand{\md}{\mathrm{d}}

\newcommand{\mR}{\mathbb{R}}

\newcommand{\mN}{\mathbb{N}}

\newcommand{\mE}{\mathbb{E}}

\newcommand{\mP}{\mathbb{P}}

\newcommand{\mT}{\mathbb{T}}

\newcommand{\ind}{\mathbbm{1}}

\renewcommand{\epsilon}{\varepsilon}

\newcommand{\M}{\mathcal{M}}

\newcommand{\F}{\mathcal{F}}
\newcommand{\E}{\mathcal{E}}

\newcommand{\Hi}{\mathcal{H}}

\newcommand{\Ti}{\mathcal{T}}

\newcommand{\B}{\mathcal{B}}

\newcommand{\cX}{\mathcal{X}}
\newcommand{\cY}{\mathcal{Y}}
\newcommand{\cZ}{\mathcal{Z}}

\newcommand{\cW}{\mathcal{W}}
\newcommand{\cL}{\mathcal{L}}

\newcommand{\cJ}{\mathcal{J}}

\newcommand{\cP}{\mathcal{P}}
\newcommand{\Epsilon}{\mathcal{E}}

\newcommand{\barS}{\bar{S}}

\newcommand{\bF}{\mathbf{F}}

\newcommand{\bp}{\mathbf{p}}

\newcommand{\bmu}{\bm{\mu}}

\newcommand{\bphi}{\bm{\phi}}

\newcommand{\sL}{\mathscr{L}}

\newcommand{\sS}{\mathcal{S}}

\newcommand{\weakstar}{\overset{\ast}{\rightharpoonup}}

\begin{document}

\title{Time-inconsistent mean-field stopping problems: A regularized equilibrium approach}

\author{Xiang Yu \thanks{Email:\ xiang.yu@polyu.edu.hk, Department of Applied Mathematics, The Hong Kong Polytechnic University, Kowloon, Hong Kong}
\and  
Fengyi Yuan\thanks{Email:\ fengyiy@umich.edu, Department of Mathematics, University of Michigan, Ann Arbor, MI, US}	}
\date{\vspace{-0.3in}}

\maketitle

\begin{abstract}
    This paper studies mean-field Markov decision processes (MDPs) with centralized stopping under non-exponential discount functions. The problem differs fundamentally from most existing studies on mean-field optimal control/stopping due to its time inconsistency by nature. We look for the subgame perfect relaxed equilibria, namely the randomized stopping policies that satisfy the time-consistent planning with future selves from the perspective of the social planner. On the other hand, unlike many previous studies on time-inconsistent stopping where decreasing impatience plays a key role, we are interested in the general discount function without imposing any conditions. As a result, the study on the relaxed equilibrium becomes necessary as the pure-strategy equilibrium may not exist in general. We formulate relaxed equilibria as fixed points of a complicated operator, whose existence is challenging by a direct method. To overcome the obstacles, we first introduce the auxiliary problem under the entropy regularization on the randomized policy and the discount function and establish the existence of the regularized equilibria as fixed points to an auxiliary operator via the Schauder fixed point theorem. Next, we show that the regularized equilibrium converges as the regularization parameter $\lambda$ tends to $0$ and the limit corresponds to a fixed point to the original operator, and hence is a relaxed equilibrium. We also establish some connections between the mean-field MDP and the N-agent MDP when $N$ is sufficiently large in our time-inconsistent setting. \\
    \ \\
    \textbf{Keywords}: Mean-field Markov decision process with stopping, time inconsistency, non-exponential discount, relaxed equilibrium, regularized equilibrium, approximated equilibrium

\end{abstract}

\section{Introduction}\label{intro}

{   Dynamic decision problems are ubiquitous in wide applications across finance, economics, and operations research. In some large and complex systems, the decision maker may confront a large volume of mutually interacting individual data that influences the total rewards, but he cannot control these individual state processes. Instead, he makes decisions on an aggregate level. For example, ETF options are important tools for practitioners to hedge systematic risk, e.g., the downward risk of an index. The exercise of American ETF options is a stopping problem that fits the previous type of decision making: whether to stop the whole system and take the rewards or to wait for larger rewards in the future. Prices of ETFs are of course determined by prices of all constituent stocks (which may have intricate interactions), but the decision maker (the option holder in this particular circumstance) can not influence or trade individual stocks. For more details of this example, see Subsection \ref{exm:ETF:option}. 

Two important features of these problems can be pinned down as the interactions in the large system and the decision-making on an aggregate level, which motivate us to consider a mean-field control framework and utilize the mean-field solution to approximate the decision making in the original large but finite system. In this paper, by employing the Markov Decision Process (MDP) with entropy regularization, we propose a fixed point method to investigate the aforementioned mean-field problems; see Sections \ref{MF-MDP} and \ref{exisresults}. Our approach does not rely on dynamic programming principle and is naturally applicable to some time-inconsistent decision making problems under non-exponential discount functions. We also establish connections among the mean-field problem, McKean-Vlasov type limit problem, and the original finite-agent problem; see Section \ref{finiteconvergencesection}.

}

\subsection{  Literature Review}

Motivated by the large population of agents with cooperative interactions, mean-field control problems have attracted a lot of attention during the last decade. The goal of this type of control problem is to determine the centralized optimal strategy for the social planner, where the controlled McKean-Vlasov system and the reward depend not only on the state and action but also on the distribution of the state. It has been shown that this centralized optimal control can often serve as the asymptotic approximation for the stochastic system with a large number of agents under the {  social optimality; see, e.g., \cite{Lacker2017}, \cite{Motte2022} and \cite{CDJS2023}}. Successful applications of mean-field control can be found in power management, finance, ride-sharing, and epidemic control, among others.

The mainstream of existing studies on mean-field control problems in both discrete-time and continuous-time frameworks rely crucially on the dynamic programming principle (DPP) or time consistency. It is well-known that on the strength of DPP, the global time optimization problem can be decomposed into a series of local time recursive optimization problems, which enables the characterization of the value function as the viscosity solution to the associated Bellman equation. In fact, unlike the counterpart of single agent's control problems, the controlled McKean-Vlasov system exhibits {   non-linear dependency on population distribution that renders the desired DPP more difficult to verify}. Recently, {  this} time-consistency issue is resolved in \cite{Pham17} in the general continuous time framework with strict control by enlarging the state space to the Wasserstein space of probability measures such that the flow property on the law of the state process holds. Later, \cite{DPT} considers the strong and weak formulation of the mean-field control in both Markovian and non-Markovian settings and establishes the DPP by interpreting controls as probability measures on an appropriate canonical space. In the discrete-time framework, \cite{Motte2022} and \cite{Bauerle2023} also investigate the mean-field MDP with general controls via the studies of the Bellman equation operator under the umbrella of DPP. See \cite{carmona2018probabilistic} for the extensive review of existing studies on this topic. 
 
Recently, \cite{N23} investigates the mean-field optimal stopping of the continuous-time McKean-Vlasov systems without common noise. By considering the weak formulation in terms of the joint marginal law of the stopped underlying process and the survival process, \cite{N23} establish the DPP and derive the dynamic programming equation as an obstacle problem on the Wasserstein space. In contrast, we focus on the discrete-time Markov decision process (MDP) under the mean-field interaction in the presence of common noise. More importantly, unlike the aforementioned studies on mean-field optimal control/stopping, we consider the mean-field stopping problem under the non-exponential discount, rendering the problem time inconsistent by nature. The inherent time inconsistency implies that the optimal strategy we find today will not be optimal at future dates, and hence the conventional methods relying on DPP are not applicable in our setting.

Since the early studies in \cite{Huang2018} and \cite{Huang2019}, the so-called {\it iteration approach} to tackle time-inconsistent stopping problems has been seen great success in various contexts in both discrete and continuous time framework; see, e.g., \cite{HHZ20}, \cite{BZZ21}, \cite{HY2021}, \cite{Huang2022}, \cite{DZY23}, just to name a few. It is worth noting that, this series of studies only focuses on {\it pure stopping policies} as hitting times, namely the stopping policies can be identically described by stopping regions, and the so-called time-consistent {\it mild equilibria} can be obtained through iterations of operators defined on stopping regions. Randomized stopping policies or mixed stopping strategies, on the other hand, have been considered recently in different topics related to stopping decisions, e.g., time-inconsistent stopping in continuous time (\cite{Christensen2020}, \cite{BSK22}), the mean-standard deviation stopping in discrete time (\cite{Bayraktar2019}) and the discrete-time Dynkin games (\cite{Christensen2023}). To the best of our knowledge, our work appears as the first study to incorporate randomized stopping policies in discrete-time mean-field MDP under the non-exponential discount.

The main reason that \cite{Huang2019} and some subsequent studies do not need to consider randomized stopping policies is that the pure-strategy equilibrium policies (equivalently, equilibrium stopping regions) always exist in their contexts under the key assumption of the decreasing impatience on the discount functions\footnote{also known as {\it present bias} in behavioral economics.}, and they can further provide optimality among all equilibria either in general models or in some concrete examples. Some previous theoretical and empirical studies focus on specific discount functions, such as the hyperbolic and quasi-hyperbolic discount, which inherently exhibit decreasing impatience; see, e.g., \cite{Loewenstein1992}, \cite{Laibson1997} and \cite{Rabin2015}. However, there is also more emerging research acknowledging the inadequacy of the decreasing impatience in describing the complex decision behavior of individuals or on the aggregation level. For example, \cite{AS12} and \cite{ABGHW16} observe the {\it increasing impatience} from the aggregate level via their experimental studies. On the other hand, \cite{ABRW10} and \cite{BGR16} identify individual heterogeneity, i.e., they find the co-existence of decreasing impatience and increasing impatience among different subjects. For more discussions on limitations of decreasing impatience, we refer readers to \cite{Rohde2018} and references therein. There are also theoretical papers studying general forms of discount functions that can accommodate both decreasing and increasing impatience; see \cite{BRW09}, \cite{Greco2023} and references therein. The aforementioned studies suggest removing the restriction of decreasing impatience to better match the observed diversity in real-life decision-making. In response, we consider the time-inconsistent mean-field stopping problem under the most general discount function without imposing additional assumptions, where we can still establish the existence of the relaxed equilibria as randomized stopping policies or mixed stopping strategies. Our result can thus be regarded as a version of {\it Nash Theorem} because we can show that the mixed-strategy equilibrium generically exists without any structural assumptions in an intra-personal game framework that achieves time-consistent planning. We also note that for some time inconsistent MDP for a single agent's control problem, some results on the existence of equilibrium control can be found, however, subjecting to other structural assumptions such as the finite state space (see \cite{HZ2021} and \cite{Bayraktar2023}) or the finite time horizon (see \cite{BH23}).

\subsection{  Our Contributions}
Contrary to existing studies on mean-field control/stopping that predominantly rely on DPP and the resulting dynamic programming equation, we are the first to consider the time-inconsistent mean-field MDP with stopping under the non-exponential discount. Our methodology differs fundamentally from these papers. Due to our general discount function, we instead employ the time-consistent planning proposed in \cite{S55}. The goal is to find a subgame perfect equilibrium as a randomized stopping policy (see Definition \ref{originalequilibrium}) for the social planner in the model with mean-field interaction such that no future selves of the social planner have the incentive to deviate. Moreover, we work with the general mean-field model to incorporate the common noise that affects the entire system. Under the game-theoretic thinking, the key mathematical challenge is to show the existence of the fixed point to the complicated operator \eqref{purefixedpoint} or \eqref{mixedequilibriumchar}. 


Contrary to some time-inconsistent stopping problems under the critical assumption of decreasing impatience, we embrace more general discount functions in the present paper that allow both decreasing and increasing impatience documented in the literature. To ensure the existence of the equilibrium in this general setting, we have to work with the randomized stopping policies or the so-called mixed-strategy stopping policies satisfying the time-consistent planning with future selves, which will be called relaxed equilibria in our setting (see Definition \ref{originalequilibrium}). However, establishing the existence of such relaxed equilibria is a challenging task in general, which corresponds to solving a technical fixed point problem. Inspired by \cite{Bayraktar2023} and some recent studies on exploratory RL problems under entropy regularization, we first work with the regularized equilibrium (see Definition \ref{eqdef}) in the presence of entropy. In particular, we utilize the specific entropy regularization on both the randomized policy and the discount function (see \eqref{valueregular}, \eqref{axvalueregular} and \eqref{discountreg}), and verify the equivalence between the regularized equilibrium in the form of a Gibbs measure and the fixed point of the auxiliary operator (see \eqref{equifix} and Proposition \ref{eqchar}) that possesses the more explicit structure. By some novel technical arguments, we are able to show that the deduced operator enjoys certain properties such that the existence of regularized equilibria can be obtained with the help of Schauder's fixed point theorem (see Theorem \ref{fixedpoint} and Corollary \ref{extregulareq}).   

{   In many recent studies on reinforcement learning (RL) for mean-field control, both DPP and entropy regularization play essential roles in designing some successful model-free iterative learning algorithms such as the Q-learning; see, e.g., \cite{Car19}, \cite{GGWX23} and \cite{weiyu2023}. Through an example of option exercise (see Subsection \ref{exm:ETF:option}), we show that even in the time-inconsistent setting, the entropy regularization still helps the design of model-free RL algorithms. In fact, it is at the core of our algorithm to learn the randomized policy satisfying the time-consistent constraint. Without the entropy regularization, we still need to consider the randomized policy as the relaxed equilibrium due to the general discount function. However, the expressions of the unregularized relaxed equilibrium policies in indifferent regions become obscure, which cannot be implemented in the policy iterations. For example, in \eqref{mixedequilibriumchar}, we have no information of $\phi^*$ in the region such that $f_{\phi^*}=r$. With the help of entropy regularization, however, we can utilize the explicit fixed point characterization of the regularized equilibrium in \eqref{equifix} to devise the policy iteration rule and further employ the neural TD algorithm for the policy evaluation.
}

To further pass from the regularized equilibrium to the relaxed equilibrium without entropy, some key steps are to show the convergence of the regularized equilibrium as $\lambda\rightarrow 0$ and verify that the limit indeed satisfies the fixed point to the operator in \eqref{mixedequilibriumchar} (see numerical illustrations in Figures \ref{solutiondemovalue} and \ref{solutiondemostrategy}).
We encounter several new challenges caused by the uncountable state space of probability measures such that some estimations obtained for the regularized equilibrium may explode as $\lambda$ vanishes. In response, we propose some novel and tailor-made arguments as follows: Firstly, we start with a direct weak-$*$ convergence and upgrade it to uniform convergence by considering the one-step transition. Secondly, by taking advantage of the special structure of the stopping problem, we show that the regularized value function converges to the unregularized one (see \eqref{uconvergencev}) using the equivalent formulation of the randomized stopping time (see Lemma \ref{formulationclassical}), and the fixed point condition can be established by properly choosing some test functions and leveraging on the weak-$*$ convergence and the uniform convergence after one-step transition. Moreover, the example in Section \ref{Illexample} shows the necessity to consider the relaxed equilibria as mixed-strategy stopping policies because the pure-strategy equilibrium in \cite{Huang2019} may not exist under the general discount in the lack of the decreasing impatience.

In Section \ref{finiteconvergencesection}, we also discuss important connections between the N-agent MDP with the centralized stopping and the mean-field MDP with stopping (see Figure \ref{MDPrelations}). It is shown that the regularized equilibrium not only will converge to the relaxed equilibrium in Definition \ref{originalequilibrium}, but also serves as a good approximated equilibrium in the N-agent MDP when $N$ is sufficiently large (see Definition \ref{Nepsilondef}, Remark \ref{rmk:finiteconvergence} and Theorem \ref{epsiloneq-N}). Admittedly, there are extensive studies on the approximation of the solution to the multi-agent control (or multi-agent game) by the mean-field solution in the mean-field control (or mean-field game), we stress that most conclusions still rely on the time-consistency or DPP. By contrast, our work appears as the first result in the time-inconsistent setting that establishes the approximation of the time-consistent equilibrium for the multi-agent centralized stopping by the regularized equilibrium in the mean-field model under entropy regularization. Our approximation result not only exemplifies the theoretical advantage of the regularized equilibrium approach but also can significantly reduce the dimension in the practical implementations of the equilibrium policy in the model with finite agents.

The remainder of this paper is organized as follows. Section \ref{MF-MDP} first introduces the problem formulation of the time-inconsistent mean-field MDP with stopping under non-exponential discount together with the definitions of the relaxed equilibrium and the regularized equilibrium under the entropy regularization. Section \ref{exisresults} presents the main results of this paper, namely the existence of the regularized equilibria and the convergence of the regularized equilibrium towards a relaxed equilibrium as the regularization parameter $\lambda\rightarrow 0$. {  Section \ref{finiteconvergencesection} further discusses the connections between our mean-field MDP with stopping and the N-agent MDP problem and establishes an approximated equilibrium to the N-agent MDP using the regularized equilibrium in the mean-field model. In Section \ref{sec:exm}, we provide two examples to illustrate financial applications of our theoretical results. Section \ref{sec:extension} discusses the extension to the asynchronous stopping of a proportion of the population. Finally, proofs of all main results are collected in Section \ref{proofs}. }

\section{Problem Formulation}\label{MF-MDP}

\subsection{Notations}
Let $\mT=\{0,1,2,\cdots\}$ be a discrete-time set of decision epochs, and let $S$ denote the state space, which is assumed to be a compact metric space equipped with the Borel sigma field $\sS$. To accommodate the feature of mean-field stopping for the large population, we introduce $\cP(S)$ as the space of probability measures on $S$, and denote the enlarged state space $\barS:= \cP(S)\bigcup \{\triangle\}$. Here, each $\mu\in \barS$ stands for the distribution state of the population, and $\triangle\in \barS$ is used to indicate the stopped state. We denote by $\cW_1$ the standard 1-Wasserstein metric on $\cP(S)$. Note that $(\cP(S),\cW_1)$ is a compact metric space, we can therefore choose a large enough constant $C$ depending only on $S$ such that $\cW_1$ can be extended to $\barS$ by defining $\cW_1(\mu,\triangle)=\cW_1(\triangle,\mu)=C$ for any $\mu\in \cP(S)$. We will also consider the 1-Wasserstein metric on $\cP(\barS)$ and denote it by $\overline{\cW}_1$.

To describe the decision-making of stopping time, we consider the simple action space $A=\{0,1\}$ (homogeneous for all states $x\in S$), where the action $a=1$ means that the {\it social planner} stops the whole system while $a=0$ indicates that the social planner chooses to continue. Due to the mean-{  field} interaction, it is natural for us to consider the randomized policy of stopping time for the social planner. In particular, let us introduce a sequence of measurable mappings $\bphi=\{\phi_k\}_{k\in \mT}$, where $\phi_k:\barS \to \cP(A)=[0,1]$, in which $\phi_k(\mu)\in [0,1]$ depicts the probability of stopping for the social planner when the population state is at $\mu\in\barS$. Let $\F$ denote the set of all policies. We also consider the {\it stationary} policies $\F_S:=\{\bphi\in \F:\phi_k=\phi_0,\forall k\in \mT \}$.

The common noise has been well documented in wide applications of multi-agent and mean-field systems, which may have significant impact on the group decision making. To incorporate the common noise into our setting, let us consider a Polish space $\cZ$, and any $z\in \cZ$ can be seen as the state of the public information that affects all individual dynamics. Let us denote $Z^0$ as a random variable valued in $\cZ$ and let $\cL(\cZ)$ be its distribution. Moreover, for $q\in [0,1]$, let $\xi^q$ be a Bernoulli random variable such that $\xi^q=1$ with probability $q$. It is assumed throughout the paper that these two random variables are independent, and at each step of the transition, we choose independent copies of them.

For any two probability spaces $(\cX,\F_\cX,P)$, $(\cY,\F_\cY)$ and a measurable mapping $F:(\cX,\F_\cX)\to (\cY,\F_\cY)$, we denote by $F_\# P$ the distribution of $F$ under $P$. Specifically, $F_\# P$ is a probability measure on $(\cY,\F_\cY)$, defined by $F_\# P(B)=P(x\in \cX:F(x)\in B)$, $\forall B\in \F_\cY$. We also use the notation $P_F$ to denote the distribution of $F$ under $P$.

\subsection{Mean-field Markov Decision Process with Stopping}
We consider the discrete-time mean-field Markov decision process where the social planner makes the centralized decision of stopping on behalf of the population, see also \cite{Motte2022} and \cite{Bauerle2023} for the more general mean-field MDP in the context of control problems. For a given policy $\bphi$ and the population distribution $\mu_k$ at time step $k$, the transition rule from $k$ to $k+1$ is given by
\begin{equation}\label{transition}
	\mu_{k+1}=T(\mu_k,\xi^{\phi_k(\mu_k)},Z^0).
\end{equation}
Here, $T:\barS\times A\times \cZ\to \barS$ is the transition operator of population distributions, $\xi^{\phi_k(\mu_k)}$ is {  a Bernoulli random variable with the success probability $\phi_k(\mu_k)$, representing the randomness from randomized (mixed) strategies, and $Z^0$ stands for the common noise.} 

We recall that $\triangle$ is added to the state space $\barS$ to indicate the stopped state. We will consider $T_0(\cdot,z):\cP(S)\to \cP(S)$, and $T_0(\triangle,z)\equiv \triangle$. 
Because we only focus on stopping time problems, $T$ can be rewritten as 
\begin{equation}\label{T0def}
    T(\mu,a,z)=\left\{
    \begin{aligned}
        &T_0(\mu,z), & a=0,\\
        &\triangle, & a=1.
    \end{aligned}
    \right.
\end{equation}
It is worth noting that, $T_0$ can be induced from the transition probability of a MDP with the state space $S$. Indeed, if the original MDP has the transition kernel $Q^{z}(x,\md x')$, we can define $T_0$ by
\[
T_0(\mu,z)(\md x')=\int_S Q^{z}(x,\md x')\mu(\md x).
\]
In Subsection \ref{relationlimit}, we also show that $T_0$ can be induced from a McKean-Vlasov type MDP, in which $T_0$ is not linear in $\mu$. 

Next, we consider a reward function $r:\barS\to \mR$. Let us set $r(\triangle)=0$ as $\triangle$ is the stopped state that produces no reward. We also consider a general discount function $\delta:\mT\to [0,1]$ satisfying $\delta(0)=1$, and $\delta(k+1)\leq \delta(k)$ for any $k\in \mT$. Other than that, we do not impose additional assumptions on the discount function such as the decreasing impatience in \cite{Huang2019}.

Finally, let us consider the value function under the prescribed policy $\bphi\in \F$, which is defined by 
\begin{equation}\label{Jdef}
J^{\bphi}(\mu):=\sum_{k=0}^{\infty}\delta(k)\mE^{\mu,\bphi}r(\mu_k)\phi_k(\mu_k),
\end{equation}
where $\mE^{\mu,\bphi}:= \mE^{\mP^{\mu,\bphi}}$ is the expectation operator under $\mP^{\mu,\bphi}$, which is a probability measure on $\Omega:= \barS^{\infty}$ induced by the transition rule \eqref{transition} such that $\mP^{\mu,\bphi}(\mu_0=\mu)=1$. Here, $\bmu=\{\mu_k\}_{k\in \mT}$ is the canonical process. The existence of the unique probability measure $\mP^{\mu,\bphi}$ is guaranteed by Ionescu-Tulcea Theorem.

To better address the path-dependency caused by the randomized stopping time, we embed the randomness from the randomized policy into the definition of $\mP^{\mu,\phi}$, thus our definition of the value function seems different from the classical randomized stopping time in the discrete-time framework in the existing literature; see, e.g., \cite{Solan2012}, \cite{Bayraktar2019}, \cite{Be19}, \cite{Christensen2023}; see also the continuous-time counterpart in \cite{Christensen2020}, \cite{Dong2022}, and et al. However, the next result shows that our formulation is equivalent to theirs to some extent.
\begin{lemma}\label{formulationclassical}
   Let $\tilde{\mP}^{\mu}$ be the probability measure induced by the transition rule $\mu_{k+1}=T_0(\mu_k,Z^0)$ and the initial condition $\mu_0=\mu$, and let $\tilde{\mE}^{\mu}$ denote its expectation. Then for any $\bphi\in \F$, $\mu\in \barS$ and $k\in \mT$, it holds that
    \[
    \mE^{\mu,\bphi}r(\mu_k)\phi_k(\mu_k)=\tilde{\mE}^\mu r(\mu_k)\phi_k(\mu_k)\prod_{j=0}^{k-1}(1-\phi_j(\mu_j)).
    \]
    From this point onwards, it is assumed by convention that $\prod_{k=0}^{-1}\equiv 1$.
\end{lemma}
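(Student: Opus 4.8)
The plan is to exploit the absorbing structure of the stopped state $\triangle$ together with $r(\triangle)=0$, and to realize both $\mP^{\mu,\bphi}$ and $\tilde\mP^\mu$ on a single probability space so that the survival probability factorizes. First I would build a common driving space carrying two independent families of i.i.d.\ randomness: the common noises $\{Z^0_j\}_{j\in\mT}$ (each distributed as $Z^0$) and auxiliary uniforms $\{U_j\}_{j\in\mT}$ on $[0,1]$, independent of the noises. On this space I define the never-stopped process by $\tilde\mu_0=\mu$, $\tilde\mu_{j+1}=T_0(\tilde\mu_j,Z^0_j)$, whose law is exactly $\tilde\mP^\mu$, and the policy-driven process $\bmu$ by using the same noises to continue and the thresholds $\xi_j:=\ind_{\{U_j\le\phi_j(\mu_j)\}}$ to decide stopping, so that $\bmu$ has law $\mP^{\mu,\bphi}$ by construction.

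Next I would reduce the left-hand side to the survival event. Since $r(\triangle)=0$ and $\triangle$ is absorbing, the summand $r(\mu_k)\phi_k(\mu_k)$ vanishes unless $\mu_k\ne\triangle$, and $\mu_k\ne\triangle$ holds exactly on $\Sigma_k:=\{\xi_0=\cdots=\xi_{k-1}=0\}$. A short induction on $j$ shows that on $\Sigma_j$ the two processes coincide, $\mu_j=\tilde\mu_j$, because no stopping has occurred and they are driven by the same common noise. Consequently $\mE^{\mu,\bphi}[r(\mu_k)\phi_k(\mu_k)]=\mE[r(\tilde\mu_k)\phi_k(\tilde\mu_k)\ind_{\Sigma_k}]$ on the coupling space.

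The key step is the factorization of $\ind_{\Sigma_k}$. Using $\mu_j=\tilde\mu_j$ on $\Sigma_j$ I would rewrite $\ind_{\Sigma_{j+1}}=\ind_{\{U_j>\phi_j(\tilde\mu_j)\}}\ind_{\Sigma_j}$, and iterating gives $\ind_{\Sigma_k}=\prod_{j=0}^{k-1}\ind_{\{U_j>\phi_j(\tilde\mu_j)\}}$. Setting $\cG:=\sigma(Z^0_j:j\in\mT)$, the whole trajectory $\{\tilde\mu_j\}$ is $\cG$-measurable while the $U_j$ are i.i.d.\ and independent of $\cG$, so conditioning yields $\mE[\ind_{\Sigma_k}\mid\cG]=\prod_{j=0}^{k-1}(1-\phi_j(\tilde\mu_j))$. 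Since $r(\tilde\mu_k)\phi_k(\tilde\mu_k)$ is $\cG$-measurable, the tower property then gives $\mE[r(\tilde\mu_k)\phi_k(\tilde\mu_k)\ind_{\Sigma_k}]=\tilde\mE^\mu[r(\mu_k)\phi_k(\mu_k)\prod_{j=0}^{k-1}(1-\phi_j(\mu_j))]$, which is the claim; the boundary case $\mu=\triangle$ is trivial as both sides vanish.

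I expect the main obstacle to be the path-dependence of the stopping decisions: the success probability $\phi_j(\mu_j)$ of the $j$-th coin depends on the current state, which itself is determined by the earlier coins, so the $\xi_j$ are not unconditionally independent. The coupling resolves this precisely because, on the survival event, the state equals the (given $\cG$, deterministic) never-stopped trajectory $\tilde\mu_j$; conditioning on $\cG$ then decouples the uniforms and turns the survival probability into the advertised product. An equivalent route, which I would keep in reserve, is to avoid the explicit coupling and argue at the level of transition kernels: for any $g$ with $g(\triangle)=0$, the one-step operator of $\mP^{\mu,\bphi}$ acts as $g\mapsto(1-\phi_j)\,\tilde P g$ on $\cP(S)$, where $\tilde P$ is the $T_0$-kernel, and a backward induction over the $k$ steps reproduces $\prod_{j=0}^{k-1}(1-\phi_j)$ directly.
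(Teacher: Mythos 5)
Your proof is correct, and it takes a genuinely different route from the paper's. The paper argues by induction on $k$ entirely at the level of the canonical measures: it applies the induction hypothesis to the shifted policy $\tilde{\bphi}$ with $\tilde{\phi}_k=\phi_{k+1}$ started from $\mu_1$, then integrates out the first transition, using $r(\triangle)=0$ and absorption to see that the stopped branch contributes nothing, which produces the extra factor $(1-\phi_0(\mu))$ and closes the induction. You instead build an explicit coupling of $\mP^{\mu,\bphi}$ and $\tilde{\mP}^{\mu}$ on a common space (shared common noises $\{Z^0_j\}$, auxiliary uniforms $\{U_j\}$), show the two trajectories agree on the survival event $\Sigma_k$, factor $\ind_{\Sigma_k}=\prod_{j=0}^{k-1}\ind_{\{U_j>\phi_j(\tilde\mu_j)\}}$, and condition on $\cG=\sigma(Z^0_j:j\in\mT)$ to turn the survival indicator into the product $\prod_{j=0}^{k-1}(1-\phi_j(\tilde\mu_j))$ via the freezing lemma. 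Your approach buys a one-shot, non-inductive identity (the only induction is the elementary coincidence of the two processes on $\Sigma_j$) and makes the randomized-stopping-time interpretation completely transparent, essentially proving Remark \ref{stoppingremark} in passing; it also extends immediately to path-dependent functionals of $(\mu_0,\dots,\mu_k)$. The price is that you must verify the coupled process really has law $\mP^{\mu,\bphi}$, which rests on the uniqueness part of Ionescu-Tulcea — a point you assert "by construction" and which deserves one explicit sentence (fresh $(U_k,Z^0_k)$ are independent of $\sigma(\mu_0,\dots,\mu_k)$, so the conditional one-step law matches the prescribed kernel). The paper's route avoids any auxiliary construction but pays with the shift-of-strategy bookkeeping; your "reserve" kernel argument at the end is in fact essentially the paper's proof.
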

\begin{proof}
    Our argument is based on mathematical induction. First, note that $k=0$ is obvious by the initial condition. Let us assume that the conclusion holds for some $k\geq 1$. Then for any $\mu\in \barS$ and $\bphi\in \F$, we consider the strategy $\tilde{\bphi}$ given by $\tilde{\phi}_k=\phi_{k+1}$. By applying the induction hypothesis to $\mu_1$ and $\tilde{\bphi}$, we get
    \[
    \mE^{\mu_1,\tilde{\bphi}}r(\mu_k)\tilde{\phi}_k(\mu_k)=\tilde{\mE}^{\mu_1}r(\mu_k)\tilde{\phi}_k(\mu_k)\prod_{j=0}^{k-1}(1-\tilde{\phi}_j(\mu_j)).
    \]
    Taking $\mE^{\mu,\bphi}$ on both sides and using the transition rule \eqref{transition}, we deduce that the left hand side is equal to $\mE^{\mu,\bphi}r(\mu_{k+1})\tilde{\phi}_k(\mu_{k+1})=\mE^{\mu,\bphi}r(\mu_{k+1})\phi_{k+1}(\mu_{k+1})$. The right-hand side, on the other hand, can be computed by
    \begin{align*}
        \mE^{\mu,\bphi}\tilde{\mE}^{\mu_1}r(\mu_k)\tilde{\phi}_k(\mu_k)\prod_{j=0}^{k-1}(1-\tilde{\phi}_j(\mu_j))&=\mE^0\tilde{\mE}^{T(\mu,\xi^{\phi_0(\mu)},Z^0)}r(\mu_k)\tilde{\phi}_k(\mu_k)\prod_{j=0}^{k-1}(1-\tilde{\phi}_j(\mu_j))\\
        &=(1-\phi_0(\mu))\mE^0\tilde{\mE}^{T_0(\mu,Z^0)}r(\mu_k)\tilde{\phi}_k(\mu_k)\prod_{j=0}^{k-1}(1-\tilde{\phi}_j(\mu_j))\\
        &=(1-\phi_0(\mu))\tilde{\mE}^{\mu}\tilde{\mE}^{\mu_1}r(\mu_k)\tilde{\phi}_k(\mu_k)\prod_{j=0}^{k-1}(1-\tilde{\phi}_j(\mu_j))\\
        &=(1-\phi_0(\mu))\tilde{\mE}^\mu r(\mu_{k+1})\tilde{\phi}_k(\mu_{k+1})\prod_{j=0}^{k-1}(1-\tilde{\phi}_j(\mu_{j+1}))\\
        &=\tilde{\mE}^\mu r(\mu_{k+1})\phi_{k+1}(\mu_{k+1})\prod_{j=0}^{k}(1-\phi_j(\mu_{j})).
    \end{align*}
    Hence, the conclusion holds for $k+1$, which completes the proof.
\end{proof}
\begin{remark}\label{stoppingremark}
	It is noted that $\phi\in \F_S$ can indeed be interpreted as a {\it randomized} stopping policy. In fact, by Lemma \ref{formulationclassical}, \eqref{Jdef} can be expressed as (we write $\phi_k:=\phi_k(\mu_k)$):
	\begin{align*}
	J^{\bphi}(\mu)&=\sum_{k=0}^{\infty}\delta(k)\tilde{\mE}^{\mu}r(\mu_k)\phi_k\prod_{k'=0}^{k-1}(1-\phi_{k'})\\
	&=\tilde{\mE}^{\mu}\mE^{\tau}\delta(\tau)r(\mu_\tau),
	\end{align*}
where $\mP^{\tau}$ is a (conditional) probability measure on $\mT$ such that $\mP^{\tau}(\tau=k)=\phi_k\prod_{k'=0}^{k-1}(1-\phi_{k'})$. This is in line with some classical definitions of the randomized stopping time in the literature.
\end{remark}

\subsection{Relaxed Equilibria and Regularized Equilibria}
We first define a relaxed equilibrium policy, which is at the core of our study. For technical convenience, we only consider stationary equilibria. 
\begin{definition}\label{originalequilibrium}
	$\phi^*\in \F_S$ is said to be a \textit{relaxed equilibrium} if,
	\begin{equation}\label{eqcond}
		J^{\psi\oplus_1\phi^*}(\mu)\leq J^{\phi^*}(\mu),\forall \mu\in \barS, \psi\in [0,1].
	\end{equation}
Here, $\tilde{\bphi}:=\psi\oplus_1\phi^*\in \F$ is given by $\tilde{\bphi}_0(\mu)=\psi$ and $\tilde{\bphi}_k(\mu)=\phi^*(\mu)$, $k\geq 1$, $\mu\in \barS$.
\end{definition}

{  
\begin{remark}
We adopt the notion of ``relaxed" equilibrium to indicate the nature of the randomized stopping, analogue to the ``relaxed policy" that has been commonly used in the context of reinforcement learning. In contrast to the pure strategy equilibrium in some existing studies of time-inconsistent stopping problems (e.g. \cite{Huang2019}), the weaker definition of the relaxed equilibrium usually guarantees its existence when the conventional pure strategy equilibrium may not exist, see Remark \ref{rmk:nonexit-pure}. We also note that this ``relaxed equilibrium" is the same as and exchangeable to the ``mixed strategy equilibrium" in the language of game theory. 
\end{remark}
}

To study the existence of relaxed equilibria, let us consider the {\it auxiliary value function} (see also \cite{Bayraktar2023} for MDP with the relaxed control) defined by 
\begin{equation}\label{auxdef}
\tilde{J}^{\phi^*}(\mu)=\sum_{k=0}^{\infty}\delta(1+k)\mE^{\mu,\phi^*} r(\mu_k)\phi^*(\mu_k).
\end{equation}
Apparently, we have
\begin{align*}
J^{\psi\oplus_1\phi^*}(\mu)=&r(\mu)\psi+\mE^0\tilde{J}^{\phi^*}(T(\mu,\xi^{\psi},Z^0))\\
=&r(\mu)\psi+(1-\psi)\mE^0\tilde{J}^{\phi^*}(T_0(\mu,Z^0)).
\end{align*}
Therefore, \eqref{eqcond} is equivalent to solving the fixed point problem:
\begin{equation}\label{purefixedpoint}
	\phi^*(\mu)\in \argmax_{\psi\in [0,1]}\left\{   r(\mu)\psi+(1-\psi)\mE^0\tilde{J}^{\phi^*}(T_0(\mu,Z^0))\right\},
\end{equation}
which is also equivalent to find the fixed point of the problem: 
\begin{equation}\label{mixedequilibriumchar}
    \phi^*(\mu)=\left\{
    \begin{aligned}
    &1, &r(\mu)>f_{\phi^*}(\mu),\\
    &0, &r(\mu)<f_{\phi^*}(\mu),
    \end{aligned}
    \right.
\end{equation}
where we denote $f_{\phi^*}(\mu):=\mE^0\tilde{J}^{\phi^*}(T_0(\mu,Z^0))$. 

To tackle this challenging fixed point problem, we resort to the method of regularization under entropy, which will provide a more explicit structure of the auxiliary fixed point problem with the regularization parameter $\lambda>0$. For $\phi\in [0,1]$, let us consider the Shannon entropy $\E(\phi):= -\phi\log \phi-(1-\phi)\log (1-\phi)$. In our framework, let us consider the following regularized expected payoff that
\begin{align}
&J_\lambda^{\bphi}(\mu):= \sum_{k=0}^{\infty}\delta_\lambda(k)\mE^{\mu,\bphi}\left[r(\mu_k)\phi_k(\mu_k)+\lambda\E(\phi_k(\mu_k))\right],\label{valueregular}\\
&\tilde{J}_\lambda^{\bphi}(\mu):= \sum_{k=0}^\infty\delta_\lambda(k+1)\mE^{\mu,\bphi}\left[r(\mu_k)\phi_k(\mu_k)+\lambda\E(\phi_k(\mu_k))\right].\label{axvalueregular}
\end{align}
To facilitate some technical arguments in the present paper, especially in the absence of the decreasing impatience condition, we propose to work with the special regularized discount  
\begin{align}\label{discountreg}
\delta_\lambda(k):= \delta(k)\left(\frac{1}{1+\lambda} \right)^{k^2}.
\end{align} 
That is, in addition to the usual entropy regularization, we enforce another level of regularization on the discount such that the discount function decays sufficiently fast. We finally let $\lambda \to 0$ and do not impose any conditions on $\delta$. In particular, we need neither the decreasing impatience property, which is required for the existence of pure strategy equilibria (c.f. \cite{Huang2019}), nor the summable assumption in the control problem (c.f. \cite{Bayraktar2023}).
\begin{definition}\label{eqdef}
$\phi^*\in \F_{S}$ is said to be a {\it regularized equilibrium} with the regularization parameter $\lambda>0$ if
\begin{equation}\label{eqcondmixregular}
	J_\lambda^{\psi\oplus_1\phi^*}(\mu)\leq J_\lambda^{\phi^*}(\mu),\forall \mu\in \barS,\psi\in [0,1].
\end{equation}
\end{definition}

\subsection{Standing Assumptions}
To establish the existence of regularized equilibria and relaxed equilibria, we need to mandate the following assumptions throughout the paper:
\begin{assumption}\label{as1}
 The reward function $r$ is Lipschitz continuous on $\barS$, i.e., there exists a constant $L_2>0$ such that
    \[
        |r(\mu)-r(\mu')|\leq L_2\cW_1(\mu,\mu'),\forall \mu,\mu'\in \barS.
    \]
\end{assumption}
\begin{remark}
    As it is assumed that $S$ is compact, Assumption \ref{as1} implies that $r$ is uniformly bounded, i.e., there exists a constant $L_1>0$ such that $\sup_{\mu\in \barS}|r(\mu)|\leq L_1$.
\end{remark}
\begin{assumption}\label{Tassumption}
    There exists a constant $L_3>0$ such that for $\mu,\mu'\in \barS$, we have
    \begin{equation}\label{TLipbarS}
\bar{d}_{\rm TV}(T_0(\mu,\cdot)_\# \cL(Z), T_0(\mu',\cdot)_\# \cL(Z))\leq L_3\cW_1(\mu,\mu'),
    \end{equation}
    where $\bar{d}_{\rm TV}$ is the total variation distance on $\cP(\barS)$.
\end{assumption}
\begin{assumption}\label{Tassumption2}
    There exists a constant $L_4>0$ such that for any $\mu,\mu'\in \barS$ and $z\in \cZ$, we have
    \begin{equation}\label{TlipS}
     \cW_1(T_0(\mu,z),T_0(\mu',z))\leq L_4\cW_1(\mu,\mu').   
    \end{equation}
\end{assumption}

Assumption \ref{Tassumption2} is the conventional Lipschitz condition that has been widely used in the control theory, and it is naturally induced by similar conditions in the corresponding problem on $S$; see Proposition \ref{assumptionrelation}. As for Assumption \ref{Tassumption}, intuitively, \eqref{TLipbarS} asserts that given two distinctive initial distributions that are close to each other, the resulting one-step transited distribution $\mu$ and $\mu'$ (both are random due to the common noise) should also be close in the sense of total variation. We comment that this is impossible without common noise because $\bar{d}_{\rm TV}(\delta_{\mu_1},\delta_{\mu_2})\equiv 1$ so long as $\mu_1\neq \mu_2$. However, Assumption \ref{Tassumption} is quite natural when $S$ is finite and the common noise is continuous, as shown in the next example. 
In Subsection \ref{relationlimit}, we will also show how this example can be constructed from the corresponding problem on $S$; see Example \ref{exm1construct}.
\begin{example}\label{exm1}
    We take $S=\{1,2\}$, so that $\cP(S)=[0,1]$. Let $\cZ=[0,1]$, and $Z\sim U$, the uniform distribution on $[0,1]$. We define the transition rule as follows: $T_0(\mu,z)=(1-\epsilon)T_1(\mu)+\epsilon z$, where $\epsilon>0$. Here, $T_1$ is a deterministic transition rule, which can be induced from the representative agent problem on $S$; see Example \ref{exm1construct}. We suppose that $T_1$ is Lipschitz. Now, for $x_0:= T_1(\mu)$ and $x_0':= T_1(\mu')$, we can easily conclude that $T_0(\mu,Z)$ has density function $f(\nu)=\frac{1}{\epsilon}\ind_{[(1-\epsilon]x_0,(1-\epsilon)x_0+\epsilon]}(\nu)$ and $T_0(\mu',Z)$ has density function $f'(\nu)=\frac{1}{\epsilon}\ind_{[(1-\epsilon)x_0',(1-\epsilon)x_0'+\epsilon]}(\nu)$. Without loss of generality, we assume $x_0\leq x_0'\leq x_0+\frac{\epsilon}{1-\epsilon}$, so that 
    \begin{align*}
    \bar{d}_{\rm TV}(T_0(\mu,\cdot)_\# U,T_0(\mu',\cdot)_\# U)&=\frac{1}{2}\int_0^1 |f(\nu)-f'(\nu)|\md \nu  \\
    &=\frac{1-\epsilon}{\epsilon}|x_0-x_0'|\\
    &\leq C\cW_1(\mu,\mu').
    \end{align*}
    Similar arguments can be applied to a more general setting with $|S|=d>2$.
\end{example}
 
To prove the existence of relaxed equilibria in Definition \ref{originalequilibrium}, we also need the following enhancement of Assumption \ref{Tassumption}. For simplicity, let us define $T^\mu:= T_0(\mu,\cdot)_\#\cL(Z) $ for any $\mu\in \cP(S)$, where $\cL(Z)$ is the law of the common noise.
 \begin{assumption}\label{dominate}
     There exists a common reference probability measure on $\cP(S)$, denoted by $T^0$, such that for any $\mu\in \cP(S)$, $T^\mu$ is absolutely continuous with respect to $T^0$. Moreover, if we denote by $f^0(\mu,\cdot)$ the density function of $T^\mu$, we also assume that for any $\mu,\mu'\in \cP(S)$,
     \begin{equation}\label{densityLip}
         \int_{\barS} |f^0(\mu,\nu)-f^0(\mu',\nu)|T^0(\md \nu)\leq L_4\cW_1(\mu,\mu').
     \end{equation}
 \end{assumption}
Clearly, \eqref{densityLip} implies \eqref{TLipbarS}. We stress that Example \ref{exm1} (with any finite $S$) actually satisfies Assumption \ref{dominate} by taking $T^0$ to be the uniform distribution on the probability simplex $\{(p_1,p_2,\cdots,p_d)\in \mR^d| p_i\geq 0, \sum_{i=1}^d p_i=1\}$. In Subsection \ref{relationlimit}, we will provide a concrete example with general $S$ (possibly infinite); see Example \ref{dominateexm}. In this regard, Assumption \ref{dominate} is in fact not restrictive. We also remark that, in the general theory of MDP, a similar assumption on the state space (which is often assumed to be merely a metric space) has been proposed; See, for example, \cite{jaskiewicz2021markov}.

\section{Main Results}\label{exisresults}
We first investigate the existence of the regularized equilibrium with a fixed regularization parameter $\lambda>0$, for which we need some additional notations as preparations. Let $C(\barS)$ be the set of continuous functions $V:\barS\to \mR$. Because $S$ is compact, $\barS$ is a compact metric space under $\cW_1$. Thus, $C(\barS)$ equipped with supreme norm is Banach. Next, let us consider an operator $\Ti^\lambda_1:C(\barS)\to \F_{S}$ given by,
\begin{equation}\label{T1def}
\Ti^\lambda_1(v)(\mu):= \frac{\exp\left( \frac{1}{\lambda}r(\mu)   \right)}{\exp\left( \frac{1}{\lambda}r(\mu)   \right)+\exp\left( \frac{1}{\lambda}\mE^0v(T_0(\mu,Z^0))   \right)}.
\end{equation}
Here, we recall that $\mE^0$ is the expectation with respect to the common noise $Z^0$.
Further, let us define another operator $\Ti^\lambda_2:\F_{S}\to C(\barS)$ by
\[
\Ti^\lambda_2(\phi)(\mu):= \tilde{J}_\lambda^\phi(\mu).
\]
We have the following characterization of the regularized equilibrium.
\begin{proposition}\label{eqchar}
Suppose Assumption \ref{as1} is in force. Then $\phi^*\in \F_{S}$ is a regularized equilibrium with regularization parameter $\lambda>0$ (per Definition \ref{eqdef}) if and only if 
\begin{align}\label{equifix}
\phi^*=\Ti^\lambda_1\circ\Ti^\lambda_2(\phi^*).
\end{align}
\end{proposition}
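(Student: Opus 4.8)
The plan is to turn the functional inequality \eqref{eqcondmixregular}, which must hold for all $\mu\in\barS$ and all $\psi\in[0,1]$ at once, into a pointwise scalar maximization in $\psi$ for each fixed $\mu$, and then to solve that maximization in closed form. The bridge is the regularized analogue of the one-step decomposition stated just before \eqref{purefixedpoint}, namely
\[
J_\lambda^{\psi\oplus_1\phi^*}(\mu)=r(\mu)\psi+\lambda\E(\psi)+(1-\psi)\,\mE^0\tilde{J}_\lambda^{\phi^*}(T_0(\mu,Z^0)).
\]
Granting this, stationarity of $\phi^*\in\F_{S}$ gives that $\phi^*(\mu)\oplus_1\phi^*$ and $\phi^*$ induce the same path law from the initial state $\mu$, so that $J_\lambda^{\phi^*}(\mu)$ equals the right-hand side at $\psi=\phi^*(\mu)$. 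Hence \eqref{eqcondmixregular} is equivalent to saying that for every $\mu$ the number $\phi^*(\mu)$ maximizes over $\psi\in[0,1]$ the scalar function
\[
g_\mu(\psi):=\psi\bigl(r(\mu)-f_\lambda(\mu)\bigr)+f_\lambda(\mu)+\lambda\E(\psi),\qquad f_\lambda(\mu):=\mE^0\tilde{J}_\lambda^{\phi^*}(T_0(\mu,Z^0)).
\]

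To establish the decomposition I would split off the $k=0$ term, which contributes exactly $\delta_\lambda(0)[r(\mu)\psi+\lambda\E(\psi)]=r(\mu)\psi+\lambda\E(\psi)$, and handle the tail $k\ge1$ by conditioning on the time-one state $\mu_1=T(\mu,\xi^{\psi},Z^0)$. With probability $\psi$ the system is absorbed at $\triangle$, which contributes nothing to the tail since the stopped state carries neither reward nor entropy (consistent with $r(\triangle)=0$); with probability $1-\psi$ one has $\mu_1=T_0(\mu,Z^0)$, after which the evolution runs under the stationary $\phi^*$. Re-indexing the chain started at $\mu_1$ and matching the shifted discount $\delta_\lambda(k+1)$ identifies the continuation, after applying $\mE^0$, with $\tilde{J}_\lambda^{\phi^*}(T_0(\mu,Z^0))$. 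The bookkeeping of survival probabilities behind this is precisely Lemma \ref{formulationclassical} applied to the reward term, together with the same survival-product representation for the entropy term and the Markov/flow property of $T_0$. Here one should note that absolute convergence of the series is not automatic, as $\delta$ need not be summable; it is the super-fast decay built into $\delta_\lambda(k)=\delta(k)(1+\lambda)^{-k^2}$, combined with the uniform bounds $|r|\le L_1$ and $0\le\E(\cdot)\le\log 2$, that makes every object above well defined.

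It then remains to solve $\max_{\psi\in[0,1]}g_\mu(\psi)$. Because $\E$ is strictly concave and $\lambda>0$, $g_\mu$ is strictly concave and has a unique maximizer; since $\E'(\psi)=\log\frac{1-\psi}{\psi}\to+\infty$ as $\psi\downarrow0$ and $\to-\infty$ as $\psi\uparrow1$, this maximizer is interior and solves the first-order condition $r(\mu)-f_\lambda(\mu)+\lambda\log\frac{1-\psi}{\psi}=0$. Solving yields the Gibbs form $\psi^*=\exp(r(\mu)/\lambda)\big/[\exp(r(\mu)/\lambda)+\exp(f_\lambda(\mu)/\lambda)]$, which upon substituting $f_\lambda(\mu)=\mE^0\Ti^\lambda_2(\phi^*)(T_0(\mu,Z^0))$ into \eqref{T1def} is exactly $\Ti^\lambda_1\circ\Ti^\lambda_2(\phi^*)(\mu)$. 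Reading the chain of equivalences in both directions gives the claimed ``if and only if''.

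I expect the main obstacle to be the decomposition step rather than the optimization: one must carefully unwind the expectation $\mE^{\mu,\psi\oplus_1\phi^*}$, in which the randomization is embedded into the path measure, while keeping track of the time-shifted discount, and Lemma \ref{formulationclassical} is exactly the tool that makes this rigorous. By contrast the concave maximization is routine, and it is here that the entropy regularization pays off, replacing the indeterminate $\argmax$ on the indifference set $\{r=f_{\phi^*}\}$ in \eqref{mixedequilibriumchar} by the explicit softmax expression $\Ti^\lambda_1$.
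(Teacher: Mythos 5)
Your proposal is correct and follows essentially the same route as the paper's own proof: the one-step decomposition $J_\lambda^{\psi\oplus_1\phi^*}(\mu)=r(\mu)\psi+\lambda\E(\psi)+(1-\psi)\mE^0\tilde{J}_\lambda^{\phi^*}(T_0(\mu,Z^0))$, followed by the pointwise strictly concave maximization in $\psi$ whose first-order condition yields the Gibbs form $\Ti^\lambda_1\circ\Ti^\lambda_2(\phi^*)$. Your treatment of the stopped state (no reward or entropy accruing at $\triangle$, so that $\tilde{J}_\lambda^{\phi^*}(\triangle)=0$) matches the convention the paper's proof uses implicitly in the same step, and your added details on re-indexing, convergence of the series under $\delta_\lambda$, and uniqueness of the maximizer only make the argument more explicit.
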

\begin{proof}
    For any $\psi\in [0,1]$, by definition of $\mP^{\mu,\phi}$, we have that
    \begin{align*}
        J_\lambda^{\psi\oplus_1\phi^*}(\mu)=&r(\mu)\psi-\lambda\psi\log \psi-\lambda(1-\psi)\log(1-\psi)\\
        &+\sum_{k=1}^\infty \delta(k) \mE^{\mu,\psi}\mE^{\mu_1,\phi^*}[r(\mu_k)\phi(\mu_k)-\lambda\E(\phi(\mu_k))]\\
        =&r(\mu)\psi-\lambda\psi\log \psi-\lambda(1-\psi)\log(1-\psi)\\
        &+\mE^0\tilde{J}_\lambda^{\phi^*}(T(\mu,\xi^{\psi},Z^0))\\
        =&r(\mu)\psi-\lambda\psi\log \psi-\lambda(1-\psi)\log(1-\psi)\\
        &+(1-\psi)\mE^0\tilde{J}_\lambda^{\phi^*}(T_0(\mu,Z^0)).
    \end{align*}
   The equilibrium condition \eqref{eqcondmixregular} is thus equivalent to 
    \[
\phi^*(\mu)\in \argmax_{\psi\in [0,1]}\{r(\mu)\psi+(1-\psi)\mE^0\Ti^\lambda_2(\phi^*)(T_0(\mu,Z^0)) -\lambda \psi\log \psi-\lambda(1-\psi)\log(1-\psi)   \}.
    \]
 Thanks to the convexity of $x\mapsto x\log x$, the maximum is attained at
    \[
\phi^*(\mu)=\frac{1}{1+\exp\left(\frac{1}{\lambda}[\mE^0\Ti^\lambda_2(\phi^*)(T_0(\mu,Z^0))-r(\mu)]    \right)},
    \]
    which is equivalent to $\phi^*=\Ti^\lambda_1\circ\Ti^\lambda_2(\phi^*)$.
\end{proof}
We are now ready to state the main results in this section. Thanks to the regularization, we are able to obtain fixed points that are Lipschitz continuous, i.e., there exists some $C>0$, such that $|\phi^*(\mu)-\phi^*(\mu')|\leq \cW_1(\mu,\mu')$. We denote by $\F_{S}^{\rm Lip}\subset \F_S$ the set of all Lipschitz strategies.
\begin{theorem}\label{fixedpoint}
    If Assumptions \ref{as1}, \ref{Tassumption} and \ref{Tassumption2} are in force, there exists a policy $\phi_\lambda\in \F^{\rm Lip}_{S}$ such that $\phi_\lambda=\Ti^\lambda_1\circ\Ti^\lambda_2(\phi_\lambda)$.
\end{theorem}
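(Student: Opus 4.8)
The plan is to establish the existence of a Lipschitz-continuous fixed point of the composite operator $\Ti^\lambda_1\circ\Ti^\lambda_2$ via the Schauder fixed point theorem, applied on a suitably chosen convex, compact subset of the Banach space $C(\barS)$. The natural candidate domain is the set $K\subset C(\barS)$ of functions that are uniformly bounded (say by $L_1$, reflecting the boundedness of $r$, scaled by the summable regularized discount $\sum_k\delta_\lambda(k)$) and Lipschitz with a fixed, explicitly computed Lipschitz constant $L$. Such a set is convex and, by the Arzel\`a–Ascoli theorem, relatively compact in $C(\barS)$ with the supremum norm; closedness of the Lipschitz-and-bounded ball is routine. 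I would work with the operator $\Ti^\lambda:=\Ti^\lambda_2\circ\Ti^\lambda_1$ mapping $C(\barS)\to C(\barS)$ (composing in the order that produces a self-map of function space), show that it maps $K$ into itself and is continuous, then Schauder yields a fixed point $v_\lambda=\Ti^\lambda(v_\lambda)$; setting $\phi_\lambda:=\Ti^\lambda_1(v_\lambda)$ and invoking Proposition \ref{eqchar} gives the desired $\phi_\lambda=\Ti^\lambda_1\circ\Ti^\lambda_2(\phi_\lambda)$, with $\phi_\lambda\in\F^{\rm Lip}_S$ because $\Ti^\lambda_1$ composed with a Lipschitz function is Lipschitz (the Gibbs/logistic map is smooth with bounded derivative for fixed $\lambda>0$).

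The key steps, in order, are as follows. First I would verify the self-map property, i.e. that $v\in K\Rightarrow \Ti^\lambda(v)\in K$. The boundedness part is immediate from $|r|\le L_1$, the entropy term being bounded by $\log 2$, and the summability of $\delta_\lambda$ guaranteed by the extra factor $(1+\lambda)^{-k^2}$ in \eqref{discountreg}. The Lipschitz part is the crux: I need to estimate $|\Ti^\lambda(v)(\mu)-\Ti^\lambda(v)(\mu')|$ in terms of $\cW_1(\mu,\mu')$. This requires controlling, for each summand of $\tilde J^{\phi}_\lambda$, how the law of the $k$-step population process under $\tilde{\mP}^\mu$ (via Lemma \ref{formulationclassical}) depends Lipschitz-continuously on the initial state, together with how the Gibbs policy $\phi=\Ti^\lambda_1(v)$ and the integrand $r\phi+\lambda\E(\phi)$ depend on $\mu$. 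Here Assumption \ref{Tassumption2} controls the $\cW_1$-propagation of the state within a common noise realization, while Assumption \ref{Tassumption} (the total-variation bound on the common-noise-averaged transition) is what lets me control the expectation over $Z^0$ without the distance blowing up; the smoothing by the common noise is exactly what Assumption \ref{Tassumption} encodes. Second I would establish continuity of $\Ti^\lambda$ on $K$ (indeed $\sup$-norm continuity suffices for Schauder), using dominated convergence on the series and continuity of the exponential/logistic maps for fixed $\lambda$. Third, I would apply Schauder and read off the equilibrium through Proposition \ref{eqchar}.

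The main obstacle I anticipate is the Lipschitz estimate in the self-map step, specifically obtaining a Lipschitz constant for $\Ti^\lambda(v)$ that is uniform over $v\in K$ and does not depend on the chosen Lipschitz bound $L$ in a way that makes the self-map inclusion fail. The danger is circularity: the Lipschitz constant of $\tilde J^\phi_\lambda$ naturally involves a factor $L_4^k$ (from iterating Assumption \ref{Tassumption2} across $k$ transitions) or a factor involving $L_3$ from Assumption \ref{Tassumption}, and if the growth in $k$ is geometric one must lean on the super-exponential decay $(1+\lambda)^{-k^2}$ of $\delta_\lambda$ to guarantee that $\sum_k\delta_\lambda(k)(\text{growth})^k<\infty$ with a value that can be absorbed into $L$. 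Concretely, I expect to decompose the per-step Lipschitz contribution into a state-propagation piece handled by $L_4$ and a common-noise-averaging piece handled by $L_3$ (and the regularity of the Gibbs map, whose derivative scales like $1/\lambda$), sum the resulting series using the $k^2$-exponent decay, and then choose $L$ large enough that the inequality closes. A secondary subtlety is ensuring the entropy term $\lambda\E(\phi(\mu_k))$ is itself Lipschitz in $\mu$, which follows because $\E$ is Lipschitz away from the endpoints and the Gibbs policy for fixed $\lambda>0$ stays bounded away from $0$ and $1$ on the compact $\barS$; this is where the strict positivity of $\lambda$ is essential and where the subsequent limit $\lambda\to0$ will genuinely require the separate convergence analysis promised elsewhere in the paper.
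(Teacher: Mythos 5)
Your proposal is correct and follows essentially the same route as the paper: the paper likewise applies Schauder to $\Ti^\lambda:=\Ti^\lambda_2\circ\Ti^\lambda_1$ on $C(\barS)$, gets uniform boundedness from the summability of $\delta_\lambda$, and closes the Lipschitz estimate by combining the total-variation smoothing of Assumption \ref{Tassumption} (which makes the output Lipschitz constant depend only on $\|v\|_\infty$, not on the input's Lipschitz constant — exactly resolving the circularity you flag) with the $\cW_1$-coupling from Assumption \ref{Tassumption2} and the super-exponential decay $(1+\lambda)^{-k^2}$. The only differences are cosmetic: the paper runs Schauder on the ball $B_R$ with relatively compact image (Arzel\`a--Ascoli applied to the image) rather than on a compact Lipschitz ball, and it makes rigorous the continuity step you compress into ``dominated convergence'' by a second coupling estimate comparing $\mP^{\mu,\phi}$ and $\mP^{\mu,\phi'}$ for nearby policies (Lemma \ref{W1estphiLemma}).
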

The proof of Theorem \ref{fixedpoint} will be postponed to Section \ref{proofs}. In view of Proposition \ref{eqchar} and Theorem \ref{fixedpoint}, the existence of a regularized equilibrium is a direct consequence. 
\begin{corollary}\label{extregulareq}
Suppose Assumptions \ref{as1}, \ref{Tassumption} and \ref{Tassumption2} are in force. Then there exist a regularized equilibrium $\phi_\lambda\in \F^{\rm Lip}_{S}$ for any regularization parameter $\lambda>0$ (see Definition \ref{eqdef}).
\end{corollary}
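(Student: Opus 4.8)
The plan is to obtain $\phi_\lambda$ as a fixed point of the composite operator $\Phi := \Ti^\lambda_1 \circ \Ti^\lambda_2$ via Schauder's fixed point theorem on a suitable subset of the Banach space $C(\barS)$. The candidate set is $K_L := \{\phi \in \F_S : 0 \le \phi \le 1, \ |\phi(\mu)-\phi(\mu')|\le L\,\cW_1(\mu,\mu') \ \forall \mu,\mu'\in\barS\}$ for a constant $L$ to be fixed below. Since $\barS$ is a compact metric space under $\cW_1$, the family $K_L$ is uniformly bounded and equicontinuous, so by the Arzel\`a--Ascoli theorem it is a compact subset of $C(\barS)$; it is clearly nonempty, convex and closed. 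The three ingredients I would establish are: (i) uniform boundedness of the auxiliary value function; (ii) that $\Phi$ maps $K_L$ into itself for an appropriate $L$; and (iii) continuity of $\Phi$ on $K_L$ in the supremum norm.

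For (i), since $|r|\le L_1$ and $0\le \E(\phi)\le \log 2$, each summand defining $\tilde J_\lambda^\phi$ is bounded by $\delta_\lambda(k+1)(L_1+\lambda\log 2)$. Because $\delta_\lambda(k)=\delta(k)(1+\lambda)^{-k^2}\le (1+\lambda)^{-k^2}$, the series converges and gives $\|\Ti^\lambda_2(\phi)\|_\infty \le M_\lambda := (L_1+\lambda\log 2)\sum_{k=0}^\infty (1+\lambda)^{-(k+1)^2}$, uniformly in $\phi$. This is precisely where the extra regularization $(1+\lambda)^{-k^2}$ on the discount pays off: it guarantees summability without any assumption on $\delta$, in particular without decreasing impatience.

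Step (ii) is the crux, and is where Assumption \ref{Tassumption} enters decisively. Writing $v:=\Ti^\lambda_2(\phi)$ and $T^\mu=T_0(\mu,\cdot)_\#\cL(Z)$, one has $\mE^0 v(T_0(\mu,Z^0))=\int_{\barS} v\,\md T^\mu$, so by \eqref{TLipbarS},
\[
\left|\mE^0 v(T_0(\mu,Z^0))-\mE^0 v(T_0(\mu',Z^0))\right|\le 2\|v\|_\infty\,\bar{d}_{\rm TV}(T^\mu,T^{\mu'})\le 2L_3 M_\lambda\,\cW_1(\mu,\mu').
\]
The decisive point is that this renders $\mu\mapsto \mE^0 v(T_0(\mu,Z^0))$ Lipschitz using \emph{only} the boundedness of $v$, rather than Lipschitz continuity of the value function, which would otherwise propagate factors of $L_4$ through infinitely many transition steps and could explode. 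Combining this with the Lipschitz continuity of $r$ (Assumption \ref{as1}) and the fact that the logistic map in \eqref{T1def} has derivative bounded by $1/(4\lambda)$ in each of its two arguments, the output $\Phi(\phi)=\Ti^\lambda_1(v)$ is Lipschitz with constant at most $L^* := \tfrac{1}{4\lambda}(L_2+2L_3 M_\lambda)$, a bound independent of $\phi$ and valued in $[0,1]$. Taking $L=L^*$ then yields $\Phi(K_{L^*})\subseteq K_{L^*}$.

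For (iii), continuity factors through the two operators. For $\Ti^\lambda_2$, I would invoke the representation of Lemma \ref{formulationclassical}, under which $\tilde J^\phi_\lambda(\mu)=\sum_k \delta_\lambda(k+1)\,\tilde{\mE}^\mu[(r(\mu_k)\phi(\mu_k)+\lambda\E(\phi(\mu_k)))\prod_{j<k}(1-\phi(\mu_j))]$ with the \emph{$\phi$-independent} measure $\tilde{\mP}^\mu$; since $r$, $\E$ and the finite products depend continuously on the values of $\phi$ and the summands are dominated by the summable sequence from (i), uniform convergence $\phi_n\to\phi$ forces $\Ti^\lambda_2(\phi_n)\to\Ti^\lambda_2(\phi)$ uniformly by dominated convergence; Assumption \ref{Tassumption2} is what makes each term, hence $\tilde J^\phi_\lambda$, continuous in $\mu$, so that $\Ti^\lambda_2$ indeed takes values in $C(\barS)$. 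For $\Ti^\lambda_1$, the displayed estimate shows $v\mapsto \mE^0 v(T_0(\cdot,Z^0))$ is continuous from $C(\barS)$ to $C(\barS)$, and composing with the continuous logistic map gives continuity of $\Ti^\lambda_1$. Thus $\Phi$ is a continuous self-map of the compact convex set $K_{L^*}$, and Schauder's theorem produces a fixed point $\phi_\lambda\in K_{L^*}\subset \F^{\rm Lip}_S$. I expect step (ii)---specifically the use of the total-variation Lipschitz estimate to circumvent the blow-up of $L_4^k$---to be the main obstacle and the technical heart of the argument.
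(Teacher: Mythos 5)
Your proposal is correct in substance, but it takes a genuinely different route from the paper's. The paper reduces the corollary to Proposition \ref{eqchar} plus Theorem \ref{fixedpoint}, and proves the latter by applying Schauder to the \emph{reverse} composition $\Ti^\lambda=\Ti^\lambda_2\circ\Ti^\lambda_1$ acting on \emph{value functions}: it shows $\Ti^\lambda$ maps the ball $B_R\subset C(\barS)$ into itself (Lemma \ref{Tibound}), that its image is uniformly Lipschitz and hence relatively compact (Lemma \ref{TLipLemma}, Corollary \ref{corocompact}), and that it is continuous (Lemmas \ref{T1continuous} and \ref{Tcontinuous}). Both the Lipschitz estimate on the image and the continuity rest on coupling-based propagation bounds $\overline{\cW}_1(\mP^{\mu,\phi}_k,\mP^{\mu',\phi}_k)\leq K_2^k\,\cW_1(\mu,\mu')$ and $\overline{\cW}_1(\mP^{\mu,\phi}_k,\mP^{\mu,\phi'}_k)\leq K_3^k\|\phi-\phi'\|_\infty$ (Lemmas \ref{W1estLemma}, \ref{W1estphiLemma}), and it is precisely to dominate the factors $K_2^k$, $K_3^k$ that the paper needs the super-exponential regularization $\delta_\lambda(k)=\delta(k)(1+\lambda)^{-k^2}$. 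You instead apply Schauder to $\Ti^\lambda_1\circ\Ti^\lambda_2$ on a compact convex set of uniformly Lipschitz \emph{policies}: compactness comes for free from Arzel\`a--Ascoli on the domain; the self-map property follows from the total-variation estimate of Assumption \ref{Tassumption} together with mere boundedness of $\tilde J^\phi_\lambda$ (the same observation as the paper's estimate \eqref{phiestLip} in Lemma \ref{phiestuniformlemma}); and continuity of $\Ti^\lambda_2$ is obtained under the policy-independent measure $\tilde{\mP}^\mu$ of Lemma \ref{formulationclassical}, so no change-of-measure estimate is ever needed. Your route therefore dispenses with Lemmas \ref{W1estLemma} and \ref{W1estphiLemma} altogether and uses the $k^2$-regularization only for plain summability of $\sum_k\delta_\lambda(k)$; what the paper's heavier bookkeeping buys is a bound $R$ on value functions that is uniform in $\lambda\leq 1$ (via the killed representation of the reward sum), which is reused in the $\lambda\to 0$ analysis of Theorem \ref{existoriginal}, whereas your constants $M_\lambda$ and $L^*$ blow up as $\lambda\to 0$ --- harmless for the corollary at fixed $\lambda$, but not sufficient downstream.

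Two small repairs. First, your representation of $\tilde J^\phi_\lambda$ under $\tilde{\mP}^\mu$ is not exact for the entropy term: Lemma \ref{formulationclassical} applies to functionals vanishing at the absorbing state $\triangle$, and $\E(\phi(\triangle))$ need not vanish, so the correct formula carries the additional terms $\lambda\E(\phi(\triangle))\bigl(1-\tilde{\mE}^\mu\prod_{j=0}^{k-1}(1-\phi(\mu_j))\bigr)$; these are continuous in $\phi$ (sup norm) and in $\mu$ by the same dominated-convergence argument, so your continuity proof survives unchanged, but the formula as written is a slip. Second, to land exactly on the statement of the corollary you should close the argument by invoking Proposition \ref{eqchar}, which identifies your fixed point of $\Ti^\lambda_1\circ\Ti^\lambda_2$ with a regularized equilibrium in the sense of Definition \ref{eqdef}.
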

One advantage of the regularized equilibrium is that it naturally gives an approximated equilibrium of the original problem. In Section \ref{finiteconvergencesection}, it is shown that the regularized equilibrium indeed provides an approximated equilibrium to the $N$-agent problem.
\begin{theorem}\label{epsiloneq-MF}
  Suppose Assumptions \ref{as1}, \ref{Tassumption} and \ref{Tassumption2} hold. For any $\epsilon>0$, $\phi_\lambda$ is an $\epsilon$-equilibrium of the original problem (per Definition \ref{originalequilibrium}), i.e., for every $\mu\in \barS$,
	\begin{equation}\label{epsilon-eqcond}
		J^{\psi\oplus_1\phi_\lambda}(\mu)\leq J^{\phi_\lambda}(\mu)+\epsilon,\forall\psi\in [0,1].
	\end{equation}
  provided that $\lambda$ is sufficiently small.
\end{theorem}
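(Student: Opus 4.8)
The plan is to exploit the regularized equilibrium inequality \eqref{eqcondmixregular} satisfied by $\phi_\lambda$ and transfer it to the unregularized payoff, controlling the discrepancy between $J$ and $J_\lambda$ only along the two strategies that actually appear, namely $\phi_\lambda$ and $\psi\oplus_1\phi_\lambda$. First I would reuse the one-step decompositions from the proof of Proposition \ref{eqchar}. Writing $f_{\phi_\lambda}(\mu):=\mE^0\tilde J^{\phi_\lambda}(T_0(\mu,Z^0))$ and $f^\lambda_{\phi_\lambda}(\mu):=\mE^0\tilde J_\lambda^{\phi_\lambda}(T_0(\mu,Z^0))$, stationarity of $\phi_\lambda$ gives
\[
J^{\psi\oplus_1\phi_\lambda}(\mu)-J^{\phi_\lambda}(\mu)=(\psi-\phi_\lambda(\mu))\big(r(\mu)-f_{\phi_\lambda}(\mu)\big),
\]
\[
J_\lambda^{\psi\oplus_1\phi_\lambda}(\mu)-J_\lambda^{\phi_\lambda}(\mu)=(\psi-\phi_\lambda(\mu))\big(r(\mu)-f^\lambda_{\phi_\lambda}(\mu)\big)+\lambda\big(\E(\psi)-\E(\phi_\lambda(\mu))\big),
\]
and the second line is $\le 0$ by \eqref{eqcondmixregular}. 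Subtracting, and using $|\psi-\phi_\lambda(\mu)|\le 1$, $|\E(\psi)-\E(\phi_\lambda(\mu))|\le\log 2$, I obtain
\[
J^{\psi\oplus_1\phi_\lambda}(\mu)-J^{\phi_\lambda}(\mu)\le \lambda\log 2+\sup_{\nu\in\barS}\big|\tilde J_\lambda^{\phi_\lambda}(\nu)-\tilde J^{\phi_\lambda}(\nu)\big|.
\]
Hence the theorem reduces to the uniform convergence $\sup_\nu|\tilde J_\lambda^{\phi_\lambda}(\nu)-\tilde J^{\phi_\lambda}(\nu)|\to 0$ as $\lambda\to0$, i.e. precisely the statement recorded in \eqref{uconvergencev}.

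To prove this convergence I would apply Lemma \ref{formulationclassical} to rewrite both auxiliary values as weighted sums over the stopping law. With $p_k(\nu):=\tilde\mE^\nu\big[\phi_\lambda(\mu_k)\prod_{j=0}^{k-1}(1-\phi_\lambda(\mu_j))\big]$, which is a sub-probability mass, $\sum_{k\ge0}p_k(\nu)\le1$ (this is the ``special structure of the stopping problem''), the difference splits as
\begin{align*}
\tilde J_\lambda^{\phi_\lambda}(\nu)-\tilde J^{\phi_\lambda}(\nu)
&=\lambda\sum_{k\ge0}\delta_\lambda(1+k)\,\mE^{\nu,\phi_\lambda}\E(\phi_\lambda(\mu_k))\\
&\quad-\sum_{k\ge0}\big(\delta(1+k)-\delta_\lambda(1+k)\big)\tilde\mE^\nu\big[r(\mu_k)\phi_\lambda(\mu_k)\textstyle\prod_{j<k}(1-\phi_\lambda(\mu_j))\big].
\end{align*}
For the entropy part, $0\le\E\le\log 2$ and $\delta_\lambda(1+k)\le(1+\lambda)^{-(1+k)^2}$ give a bound $\lambda\log 2\sum_{k\ge0}(1+\lambda)^{-k^2}$, and the elementary estimate $\sum_{k\ge0}(1+\lambda)^{-k^2}\le C\lambda^{-1/2}$ makes this $O(\sqrt\lambda)$, vanishing uniformly in $\nu$; this is where the extra discount regularization $(1+\lambda)^{-k^2}$ in \eqref{discountreg} pays off.

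The core estimate is the discount part, which I would bound by $L_1\sum_{k\ge0}\big(\delta(1+k)-\delta_\lambda(1+k)\big)p_k(\nu)$ since $0\le\delta-\delta_\lambda$ and $|r|\le L_1$. I would then split the sum at a truncation level $K$. On the head $k<K$, $\delta(1+k)-\delta_\lambda(1+k)=\delta(1+k)\big(1-(1+\lambda)^{-(1+k)^2}\big)\le 1-(1+\lambda)^{-K^2}$, which tends to $0$ as $\lambda\to0$ for fixed $K$; combined with $\sum_k p_k(\nu)\le1$ the head contribution is at most $1-(1+\lambda)^{-K^2}$. On the tail $k\ge K$, I would use $\delta(1+k)-\delta_\lambda(1+k)\le\delta(1+k)$ and again $\sum_k p_k(\nu)\le 1$ to bound the tail by $\sup_{k\ge K}\delta(1+k)$. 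Choosing $K$ large first and then $\lambda$ small yields a bound uniform in $\nu$.

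The main obstacle is exactly this tail: for fixed $\lambda$ the factor $\delta(1+k)-\delta_\lambda(1+k)$ increases to $\delta(1+k)$ as $k\to\infty$ rather than to $0$, so shrinking $\lambda$ alone cannot tame it, and the sub-probability bound $\sum_k p_k\le1$ is what prevents the tail from blowing up while $\sup_{k\ge K}\delta(1+k)$ is forced small. Making $\sup_{k\ge K}\delta(1+k)\to0$ requires the decay $\delta(k)\to0$ of the discount (which holds for the hyperbolic and quasi-hyperbolic families of interest and is far weaker than summability); I would flag that without such decay a ``dithering'' policy, which stops only at times $K(\lambda)\to\infty$, can keep $\tilde J_\lambda^{\phi_\lambda}$ and $\tilde J^{\phi_\lambda}$ separated by $\sup_k\delta(k)$, so this hypothesis is genuinely used. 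A point I would double-check is that $K$ and the threshold on $\lambda$ are chosen independently of $\nu$ and of $\phi_\lambda$; this is automatic, since both $\sum_k p_k\le1$ and the discount estimates are uniform in the starting distribution and in the policy, so the Lipschitz regularity of $\phi_\lambda$ from Theorem \ref{fixedpoint} is not even needed here.
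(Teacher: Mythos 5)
Your strategy is essentially the paper's own: start from the regularized equilibrium inequality \eqref{eqcondmixregular}, transfer it to the unregularized payoff, and control the two error terms (entropy and discount replacement) using Lemma \ref{formulationclassical}, the sub-probability bound $\sum_k\phi_k\prod_{j<k}(1-\phi_j)\le 1$, and the estimate behind Proposition \ref{lambdavanishing}. Your one-step reduction through $f_{\phi_\lambda}$ and $f^\lambda_{\phi_\lambda}$ is a tidier repackaging of the paper's direct series comparison (the paper derives $J^{\psi\oplus_1\phi_\lambda}(\mu)\le J^{\phi_\lambda}(\mu)+\delta J^\lambda+\delta\E^\lambda$ and then argues both error terms vanish), your entropy estimate is exactly the paper's, and your observations that the choices are uniform in $\mu,\psi$ and that the Lipschitz regularity of $\phi_\lambda$ is never used both match the paper.

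The divergence is in the discount term, and this is where the substantive issue lies. The paper disposes of $\delta J^\lambda=\sum_k|\delta(k)-\delta_\lambda(k)|(\cdots)$ by invoking dominated convergence together with the sub-probability bound, with no decay assumption on $\delta$; you instead do the head/tail split and correctly observe that the tail is only controlled by $\sup_{k\ge K}\delta(k)$, so your argument needs $\delta(k)\to 0$, a hypothesis the theorem does not state. Strictly as a proof of Theorem \ref{epsiloneq-MF} this is a gap --- but it is a gap you flagged, and it is the same gap that the paper's own proof glosses over: the weights $p_k(\lambda)=\tilde\mE^\mu\big[\phi_\lambda(\mu_k)\prod_{j<k}(1-\phi_\lambda(\mu_j))\big]$ depend on $\lambda$ through the policy, so they cannot serve as a dominating sequence, and the only $\lambda$-free majorant, $2L_1\delta(k)$, is summable only under the summability assumption the paper explicitly declines to impose. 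Moreover, your ``dithering'' obstruction is not hypothetical for Gibbs equilibria: Lemma \ref{phiestuniformlemma} only guarantees $\phi_\lambda\ge\big(1+e^{(R+L_1)/\lambda}\big)^{-1}$, and in an invariant region where $r\equiv -1$ the self-consistent Gibbs stopping rate is of order $e^{-c/\lambda}$ (stopping inside the $\delta_\lambda$-horizon is strictly worse than dithering and collecting entropy), so the stopping mass sits at times of order $e^{c/\lambda}\gg\lambda^{-1/2}$; there $\tilde J_\lambda^{\phi_\lambda}\approx 0$ while $\tilde J^{\phi_\lambda}\approx -\lim_k\delta(k)$, a wedge that does not close as $\lambda\to 0$ when $\lim_k\delta(k)>0$. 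Feeding such a ``purgatory'' region into a state whose reward lies strictly between these two continuation values produces (modulo a careful construction meeting Assumptions \ref{as1}--\ref{Tassumption2}) a deviation gain bounded away from zero along $\lambda\to 0$, so the decay of $\delta$ appears to be genuinely needed rather than an artifact of your method.

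In summary: under the additional hypothesis $\lim_k\delta(k)=0$ --- satisfied by every example in the paper, and far weaker than summability or decreasing impatience --- your proof is complete and correct, and it is in fact more careful than the paper's at the one step where care is required; the discrepancy with the stated theorem points to a soft spot in the paper's own argument, not to a repairable defect in yours.
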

\begin{proof}
    For simplicity, we fix $\mu\in \barS$ and $\psi\in [0,1]$ and denote $\phi'=\psi\oplus_1\phi_\lambda$ . By Corollary \ref{extregulareq} and Definition \ref{eqdef}, we have
    \[
    \sum_{k=0}^\infty \delta_\lambda(k)\mE^{\mu,\phi'}[r(\mu_k)\phi'_k(\mu_k)+\lambda \E(\phi'_k(\mu_k))]\leq \sum_{k=0}^\infty \delta_\lambda(k)\mE^{\mu,\phi_\lambda}[r(\mu_k)\phi_\lambda(\mu_k)+\lambda \E(\phi_\lambda(\mu_k))],
    \]
    implying 
    \begin{equation}\label{epsilon-eq-proof-1}
        J^{\psi\oplus_1\phi_\lambda}(\mu)\leq J^{\phi_\lambda}(\mu)+\delta J^\lambda+\delta \E^\lambda,
    \end{equation}
    where
    \begin{align*}
        &\delta J^\lambda:= \sum_{k=0}^\infty |\delta_\lambda(k)-\delta(k)|(\mE^{\mu,\phi'}|r(\mu_k)\phi'_k(\mu_k)| +\mE^{\mu,\phi_\lambda}|r(\mu_k)\phi_\lambda(\mu_k)| ),\\
        &\delta \E^\lambda:= \lambda \sum_{k=0}^\infty\delta_\lambda(k)(\mE^{\mu,\phi'}|\E(\phi'_k(\mu_k))|+\mE^{\mu,\phi_\lambda}|\E(\phi_\lambda(\mu_k))|).
    \end{align*}
    Because $r$, $\E(\phi')$ and $\E(\phi_\lambda)$ are all uniformly bounded, we use Proposition \ref{lambdavanishing} below to conclude that $\delta \E^\lambda\to 0$ as $\lambda\to 0$. On the other hand, by Lemma \ref{formulationclassical} and $\sum_k^\infty \phi_k(\mu)\prod_{j=0}^{k-1} (1-\phi_j(\mu))\leq 1$ for any $\phi\in \F$, $\mu\in \barS$, dominated convergence theorem is applicable to argue that $\delta J^\lambda\to 0$ as $\lambda\to 0$. Thus, we can choose $\lambda$ sufficiently small, such that $\delta J^\lambda+\delta \E^\lambda <\epsilon$. Note that, the choice of $\lambda$ is independent of $\psi\in [0,1]$ and $\mu\in \barS$. We thus get \eqref{epsilon-eqcond} from \eqref{epsilon-eq-proof-1}.
\end{proof}
Another main result of this paper is the next theorem, confirming the existence of the relaxed equilibria to the original time-inconsistent mean-field stopping problem without entropy regularization. The existence of equilibria of the time-inconsistent control/stopping problems is known to be challenging. {  Thanks to the regularized equilibrium approach, the existence of relaxed equilibria can be obtained without any additional assumption on the discount function. In particular, there is no need to assume special forms of discount functions such as the quasi-hyperbolic discount in a series of recent papers \cite{BJN2020}, \cite{jaskiewicz2021markov} and \cite{BRW2022}.
Furthermore, we can remove the decreasing impatience condition, which plays a key role in \cite{Huang2019} and some subsequent studies.}

\begin{theorem}\label{existoriginal}
    Let Assumptions \ref{as1}, \ref{Tassumption2} and \ref{dominate} hold. There exists a relaxed equilibrium for the original problem as in Definition \ref{originalequilibrium}.
\end{theorem}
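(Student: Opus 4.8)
The plan is to obtain the relaxed equilibrium as a weak-$\ast$ limit of the regularized equilibria $\phi_\lambda$, whose existence for every $\lambda>0$ is guaranteed by Corollary \ref{extregulareq}. The starting observation is the same computation used in Proposition \ref{eqchar}: since $J^{\psi\oplus_1\phi^*}(\mu)=r(\mu)\psi+(1-\psi)f_{\phi^*}(\mu)$ is affine in $\psi$, the equilibrium condition \eqref{eqcond} holds for all $\psi\in[0,1]$ if and only if the pointwise dichotomy \eqref{mixedequilibriumchar} holds, namely $\phi^*(\mu)=1$ on $\{r>f_{\phi^*}\}$, $\phi^*(\mu)=0$ on $\{r<f_{\phi^*}\}$, and $\phi^*(\mu)$ arbitrary on the indifference set $\{r=f_{\phi^*}\}$. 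Hence it suffices to produce a measurable $\phi^*\in\F_S$ satisfying \eqref{mixedequilibriumchar}, where $f_{\phi^*}(\mu)=\mE^0\tilde J^{\phi^*}(T_0(\mu,Z^0))$ is built from the \emph{unregularized} auxiliary value \eqref{auxdef}.

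\textbf{Compactness and the one-step smoothing upgrade.} Under Assumption \ref{dominate} every $T^\mu$ is absolutely continuous with respect to the reference probability $T^0$, so each $\phi_\lambda$ can be regarded as a point of the unit ball of $L^\infty(\cP(S),T^0)$ (its value at the stopped state $\triangle$ is immaterial as $r(\triangle)=0$). By sequential Banach--Alaoglu I extract $\lambda_n\downarrow 0$ with $\phi_{\lambda_n}\weakstar\phi^*$, $0\le\phi^*\le 1$. Because the regularized equilibria need not be equi-Lipschitz as $\lambda\to 0$, no stronger compactness is directly available; the key device is that a single transition restores it. Indeed, for any uniformly bounded family $h_n$ the maps $\mu\mapsto\int_{\barS} h_n(\nu)f^0(\mu,\nu)T^0(\md\nu)$ are equi-Lipschitz, since \eqref{densityLip} gives $\bigl|\int h_n(\nu)[f^0(\mu,\nu)-f^0(\mu',\nu)]T^0(\md\nu)\bigr|\le\|h_n\|_\infty L_4\cW_1(\mu,\mu')$; combined with the pointwise-in-$\mu$ convergence furnished by weak-$\ast$ convergence against the fixed test density $f^0(\mu,\cdot)\in L^1(T^0)$, the Arzel\`a--Ascoli theorem upgrades weak-$\ast$ convergence to \emph{uniform} convergence after one step. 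This is the mechanism anticipated in the introduction.

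\textbf{Convergence of the one-step value.} The heart of the proof is to show $f_{\phi_{\lambda_n},\lambda_n}\to f_{\phi^*}$ uniformly, where $f_{\phi,\lambda}(\mu)=\mE^0\tilde J_\lambda^{\phi}(T_0(\mu,Z^0))$ is the object entering the Gibbs form of the regularized equilibrium. First I would remove the regularization: exactly as in the proof of Theorem \ref{epsiloneq-MF}, the entropy contribution and the gap between $\delta_\lambda$ and $\delta$ are uniformly bounded and vanish as $\lambda\to 0$ by Proposition \ref{lambdavanishing}, leaving the genuinely unregularized one-step values. For these I exploit the stopping structure. Writing $g_n^{(p)}(\mu):=\mE^0\tilde J^{\phi_{\lambda_n},(p)}(T_0(\mu,Z^0))$ for the shifted-discount auxiliary values (so $g_n^{(1)}$ is the target), Lemma \ref{formulationclassical} yields the recursion
\begin{equation*}
g_n^{(p)}(\mu)=\delta(p)\int_{\barS} r(\nu)\phi_{\lambda_n}(\nu)f^0(\mu,\nu)T^0(\md\nu)+\int_{\barS}(1-\phi_{\lambda_n}(\nu))\,g_n^{(p+1)}(\nu)\,f^0(\mu,\nu)T^0(\md\nu).
\end{equation*}
The first integral converges uniformly by the upgrade above, and once $g_n^{(p+1)}$ converges uniformly the product $(1-\phi_{\lambda_n})g_n^{(p+1)}$ converges weak-$\ast$; I intend to pass to the limit level by level and identify the limit with the recursion satisfied by $\mE^0\tilde J^{\phi^*,(p)}(T_0(\cdot,Z^0))$, giving $g_n^{(1)}\to f_{\phi^*}$ uniformly. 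The point I expect to be the main obstacle is that $\delta$ is not assumed summable, so the tail of this recursion cannot be controlled by a crude geometric bound; the resolution is the stopping-mass identity of Remark \ref{stoppingremark}, $\sum_k\phi(\mu_k)\prod_{j<k}(1-\phi(\mu_j))\le 1$, which keeps the total contribution bounded uniformly in $n$ and lets a dominated-convergence argument close the limit along the recursion.

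\textbf{Verification of the fixed point.} By Proposition \ref{eqchar} the regularized equilibrium is $\phi_{\lambda_n}(\mu)=\bigl(1+\exp(\tfrac{1}{\lambda_n}(f_{\phi_{\lambda_n},\lambda_n}(\mu)-r(\mu)))\bigr)^{-1}$. On the open set $\{r>f_{\phi^*}\}$ the uniform convergence $f_{\phi_{\lambda_n},\lambda_n}\to f_{\phi^*}$ drives the exponent to $-\infty$, so $\phi_{\lambda_n}\to 1$ pointwise there, and likewise $\phi_{\lambda_n}\to 0$ on $\{r<f_{\phi^*}\}$; bounded convergence then identifies the weak-$\ast$ limit, yielding $\phi^*=1$ $T^0$-a.e.\ on $\{r>f_{\phi^*}\}$ and $\phi^*=0$ $T^0$-a.e.\ on $\{r<f_{\phi^*}\}$. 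Finally I would promote this a.e.\ statement to the pointwise condition \eqref{mixedequilibriumchar}: since the laws of $\mu_k$, $k\ge 1$, are absolutely continuous with respect to $T^0$, modifying $\phi^*$ on a $T^0$-null set alters $\tilde J^{\phi^*}$, and hence $f_{\phi^*}$, only on a $T^0$-null set; I may therefore replace $\phi^*$ by the representative equal to $1$ on $\{r>f_{\phi^*}\}$, to $0$ on $\{r<f_{\phi^*}\}$, and arbitrary on the indifference set, without changing $f_{\phi^*}$ or the sets $\{r\gtrless f_{\phi^*}\}$. This representative satisfies \eqref{mixedequilibriumchar} pointwise and is thus a relaxed equilibrium by the equivalence recorded in the first paragraph.
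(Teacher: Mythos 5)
Your overall architecture coincides with the paper's: weak-$*$ compactness in $L^\infty(\barS,T^0)$, the one-step Arzel\`a--Ascoli upgrade to uniform convergence via \eqref{densityLip}, the Gibbs-form argument forcing the dichotomy $T^0$-a.e., and the final null-set redefinition using the fact that $f_{\phi^*}$ depends on $\phi^*$ only through its $T^0$-equivalence class (the paper's Remark \ref{equivalencephi}). The place where you genuinely diverge is the identification step $f_{\phi_{\lambda_n},\lambda_n}\to f_{\phi^*}$, and there your argument has a real gap: the induction on the discount-shift level $p$ is circular. Your recursion expresses $g_n^{(p)}$ through $g_n^{(p+1)}$, so ``passing to the limit level by level'' presupposes convergence at level $p+1$ before level $p$ can be treated; there is no base case from which this induction can start, and no fixed-point closure is available either, since once $\lambda\to0$ the coefficient multiplying $\|g_n^{(p+1)}-g^{(p+1)}\|_\infty$ in the recursion (namely $\sup_\mu\int(1-\phi_{\lambda_n})f^0(\mu,\cdot)\,\md T^0\le 1$) is not uniformly below $1$. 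The stopping-mass bound you invoke controls the interchange of the $n$-limit with the sum over $k$ — it plays the same role as the dominated-convergence step in the paper's Lemma \ref{eqcondv} — but it cannot repair the induction, because what is actually missing is the per-term convergence of the finite-time functionals $\tilde{\mE}^\mu\bigl[r(\mu_k)\phi_{\lambda_n}(\mu_k)\prod_{j=1}^{k-1}(1-\phi_{\lambda_n}(\mu_j))\bigr]$, which are multilinear in $\phi_{\lambda_n}$. This is the genuinely nontrivial point: weak-$*$ convergence is a linear notion and does not commute with products (an oscillating $\{0,1\}$-valued sequence with $\phi_n\weakstar \tfrac12$ satisfies $\phi_n^2=\phi_n\weakstar\tfrac12\neq\tfrac14$), so convergence of such products must be earned from the structure of the problem rather than asserted.

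This is exactly what the paper supplies in Lemma \ref{Jconvergence}: a well-founded induction on the cardinality of the set $J$ of product factors, in which Assumption \ref{dominate} is used once per factor — condition on the largest time index, integrate that single $\phi_{\lambda_n}$ against the density $f^0$, and feed the resulting bounded function back as the test function $g$ in the induction hypothesis. Your proposal implicitly needs this lemma but never proves it. The repair is close to what you wrote: replace the induction on $p$ by an induction on the truncation depth $K$ of the unrolled recursion (terminal value $0$), which does have a base case; your own mechanism (a product of a bounded weak-$*$ convergent sequence with a uniformly convergent sequence converges weak-$*$, after which the one-step upgrade restores uniformity) then proves convergence of every finite truncation, and is essentially a repackaging of Lemma \ref{Jconvergence}. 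The passage $K\to\infty$, jointly with $n\to\infty$, is then carried out by the same dominated-convergence/stopping-mass argument the paper uses. As written, however, the proposal is missing the key idea that makes the identification step work.
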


{  
The proof of Theorem \ref{existoriginal} is presented in Subsection \ref{proof:existoriginal}.

\begin{remark}
	Problems in Sections \ref{MF-MDP} and \ref{exisresults} (regularized or not) resemble classical single-agent stopping problems with time-inconsistency. Indeed, our main results (and their proofs in Section \ref{proofs}) do not explicitly rely on the space $\bar S$ and its distance $\cW_1$. They can be easily extended to more general metric spaces. Nevertheless, because we focus on mean-field problems, the state space is inherently continuous. In this regard, our results seem to be new even if our problem is interpreted as a single-agent problem. Indeed, the existence of equilibria in time-inconsistent problems is usually based on structural assumptions such as finite state space or decreasing impatience of discount functions, while in this paper we can obtain its existence without imposing the conventional assumptions thanks to the entropy regularization. Moreover, the entropy regularized equilibrium also allows us to devise some explicit policy iteration learning algorithm to find the time-consistent equilibrium stopping policies in a mean-field model, which is also new to the literature; see Subsection \ref{exm:ETF:option}. 
	
	\end{remark}
}

\section{Connections to the $N$-agent Problem}\label{finiteconvergencesection}
The goal of this section is to discuss some connections between the mean-field MDP in \eqref{transition} and \eqref{Jdef}, denoted by ({\bf MF-MDP}) and the multi-agent MDP ({\bf N-MDP}). To this end, we first introduce a limiting Mckean-Vlasov type MDP ({\bf Limit-MDP}). We provide the propagation of chaos results and show the convergence of value function under a Lipschitz strategy, both with explicit convergence rate. We then show that the transition rule $T_0$ can be naturally induced by the transition rule (denoted by $T^r_0$) of ({\bf Limit-MDP}), and Assumptions \ref{as1}-\ref{dominate} can be implied by similar assumptions on $T^r_0$ and the common noise. Finally, we show that the regularized equilibrium given in Corollary \ref{extregulareq} can also serve as an $\epsilon$-equilibrium of ({\bf N-MDP}), provided that $N$ is sufficiently large and $\lambda$ is sufficiently small. The relationship among these MDP models is summarized in Figure \ref{MDPrelations}.
\begin{figure}
\centering
\tikzset{every picture/.style={line width=0.75pt}} 
\tikzset{global scale/.style={
    scale=#1,
    every node/.append style={scale=#1}
  }
}
\begin{tikzpicture}[x=0.75pt,y=0.75pt,yscale=-1,xscale=1,global scale=0.75]

\draw   (39,18) -- (170,18) -- (170,87.8) -- (39,87.8) -- cycle ;
\draw    (100,102) -- (100.98,188.8) ;
\draw [shift={(101,190.8)}, rotate = 269.35] [color={rgb, 255:red, 0; green, 0; blue, 0 }  ][line width=0.75]    (10.93,-3.29) .. controls (6.95,-1.4) and (3.31,-0.3) .. (0,0) .. controls (3.31,0.3) and (6.95,1.4) .. (10.93,3.29)   ;
\draw   (37,193) -- (168,193) -- (168,262.8) -- (37,262.8) -- cycle ;
\draw   (436,193) -- (567,193) -- (567,262.8) -- (436,262.8) -- cycle ;
\draw    (184,224) -- (397,225.78) ;
\draw [shift={(399,225.8)}, rotate = 180.48] [color={rgb, 255:red, 0; green, 0; blue, 0 }  ][line width=0.75]    (10.93,-3.29) .. controls (6.95,-1.4) and (3.31,-0.3) .. (0,0) .. controls (3.31,0.3) and (6.95,1.4) .. (10.93,3.29)   ;
\draw    (357,129) -- (463.15,172.05) ;
\draw [shift={(465,172.8)}, rotate = 202.08] [color={rgb, 255:red, 0; green, 0; blue, 0 }  ][line width=0.75]    (10.93,-3.29) .. controls (6.95,-1.4) and (3.31,-0.3) .. (0,0) .. controls (3.31,0.3) and (6.95,1.4) .. (10.93,3.29)   ;
\draw   (288,106) .. controls (288,92.19) and (299.19,81) .. (313,81) .. controls (326.81,81) and (338,92.19) .. (338,106) .. controls (338,119.81) and (326.81,131) .. (313,131) .. controls (299.19,131) and (288,119.81) .. (288,106) -- cycle ;
\draw    (273,89.8) -- (184.84,51.6) ;
\draw [shift={(183,50.8)}, rotate = 23.43] [color={rgb, 255:red, 0; green, 0; blue, 0 }  ][line width=0.75]    (10.93,-3.29) .. controls (6.95,-1.4) and (3.31,-0.3) .. (0,0) .. controls (3.31,0.3) and (6.95,1.4) .. (10.93,3.29)   ;

\draw (83,21) node [anchor=north west][inner sep=0.75pt]   [align=left] {\textbf{N-MDP}};
\draw (49,40) node [anchor=north west][inner sep=0.75pt]   [align=left] {\eqref{transtition-N-MDP} and \eqref{JNdef}};
\draw (110,131) node [anchor=north west][inner sep=0.75pt]   [align=left] {$N\to \infty$};
\draw (66,197) node [anchor=north west][inner sep=0.75pt]   [align=left] {\textbf{Limit-MDP}};
\draw (47,215) node [anchor=north west][inner sep=0.75pt]   [align=left] {\eqref{transition-limitMDP} and \eqref{JlimitMDP}};
\draw (471,198) node [anchor=north west][inner sep=0.75pt]   [align=left] {\textbf{MF-MDP}};
\draw (446,217) node [anchor=north west][inner sep=0.75pt]   [align=left] {\eqref{transition} and \eqref{Jdef}};
\draw (256,198) node [anchor=north west][inner sep=0.75pt]   [align=left] {lifted to $\barS$};
\draw (303,97) node [anchor=north west][inner sep=0.75pt]   [align=left] {$\phi_\lambda$};
\draw (404,122) node [anchor=north west][inner sep=0.75pt]   [align=left] {regularized equilibrium};
\draw (211,43) node [anchor=north west][inner sep=0.75pt]   [align=left] {$\epsilon$-equilibrium};
\end{tikzpicture}
\caption{Relationship among different MDP models.}
\label{MDPrelations}
\end{figure}

\subsection{The Convergence of ({\bf N-MDP}) to ({\bf Limit-MDP}) }\label{convergencesubsection}
Let $[N]:= \{1,2,\cdots,N\}$ be the index set of agents in the multi-agent MDP model. In this section, we consider a fixed probability space $(\Omega,\bF,\mP)$ that supports the following randomnesses:
\begin{itemize}
    \item[(1)] The initial states of all agents $\{\xi^i\}_{i\in [N]}$.
    \item[(2)] The idiosyncratic noises of all agents $\{Z^i_k\}_{i\in [N],k\in \mT}$.
    \item[(3)] The common noise $\{Z^0_k\}_{k\in \mT}$, as defined in Section \ref{MF-MDP}.
    \item[(4)] The random device for the social planner to determine whether to stop or not $\{U_k\}_{k\in \mT}$.
\end{itemize}

It is assumed that all these randomness are independent. Suppose that $\{\xi^i\}_{i\in [N]}$ are valued in $S$ with a common distribution $\nu_0\in \cP(S)$, and $\{Z^i_k\}_{i\in [N],k\in \mT}$ are valued in $\cZ$ with a common distribution $\cL(\cZ)'$. Moreover, the random devices $\{U_k\}_{k\in \mT}$ are all uniform random variables on $[0,1]$. For any $\phi\in \F_S$, it induces a strategy in the multi-agent problem, given by $\phi_N(\vec{x})=\phi(\frac{1}{N}\sum_{j\in [N]}\delta_{x_j})$, where $\vec{x}=(x_1,\cdots,x_N)$. 

With the above notations, let us consider the transition rule in the multi-agent problem:
\begin{equation}\label{transtition-N-MDP}
    X^{i,N,\phi}_{k+1}=T^r\left(X^{i,N,\phi}_k,\frac{1}{N}\sum_{j\in [N]}\delta_{X_k^{i,N,\phi}},\ind_{\{U_{k+1}\leq \phi_N(\vec{X}^{N,\phi}_k)\}},Z^i_{k+1},Z^0_{k+1}\right), X^{i,N,\phi}_0=\xi^i.
\end{equation}
It is assumed that $T^r$ satisfies
\begin{equation}\label{Tr0def}
    T^r(x,\mu,a,z',z)=\left\{
    \begin{aligned}
        &T^r_0(x,\mu,z',z), & a=0,\\
        &\triangle_S, & a=1,
    \end{aligned}
    \right.
\end{equation}
with a function $T^r_0:S\times \barS\times \{0,1\}\times \cZ\times \cZ\to S$. Here, as in the mean-field counterpart, $\triangle_S$ is a state added to $S$ to label the stopped agents. Therefore, we also assume $T^r_0(\triangle_S,\mu,a,z',z)\equiv\triangle_S$. In this multi-agent model, the total reward of the (induced) strategy $\phi_N$ is defined by 
\begin{equation}\label{JNdef}
    J^{i,N,\phi}(\xi^i)=\sum_{k=0}^\infty \delta(k)\mE \left[f\left(X^{i,N,\phi}_k,\frac{1}{N}\sum_{j\in [N]}\delta_{X^{i,N,\phi}_k}\right)\phi_N(\vec{X}^{N,\phi}_k)\right],
\end{equation}
for some reward function $f:S\times \barS\to \mR$. Letting $N\to \infty$ in \eqref{transtition-N-MDP} and \eqref{JNdef}, we heuristically derive the limiting problem, i.e., ({\bf Limit-MDP}), with transition rule and value function given by  
\begin{align}
    & X^{i,\phi}_{k+1}=T^r\left(X^{i,\phi}_k,\mP^0_{X^{i,\phi}_k},\ind_{\{U_{k+1}\leq \phi(\mP^0_{X^{i,\phi}_k})\}},Z^i_{k+1},Z^0_{k+1}\right), X^{i,\phi}_0=\xi^i, \label{transition-limitMDP}\\
     &J^{i,\phi}(\xi^i)=\sum_{k=0}^\infty \delta(k)\mE \left[f\left(X^{i,\phi}_k,\mP^0_{X^{i,\phi}_k}\right)\phi(\mP^0_{X^{i,\phi}_k})\right]\label{JlimitMDP}.
\end{align}
To establish some convergence results, we impose the following Lipschitz assumptions on $T^r_0$ and $f$; {   see similar assumptions (($\rm \bf HF_{lip}$) and ($\rm \bf Hf_{lip}$)) in \cite{Motte2022}}.
\begin{assumption}\label{Tr-f-Lip}
    There exist constants $L_5,L_6>0$ such that for any $x,x'\in S$, $\mu,\mu'\in \barS$, $z,z'\in \cZ$ we have
    \begin{align*}
    &\int_\cZ d(T^r_0(x,\mu,z',z),T^r_0(x',\mu',z',z))\cL(\cZ)'(\md z')\leq L_5(d(x,x')+\cW_1(\mu,\mu')),\\
\text{and}\ \ \  &|f(x,\mu)-f(x',\mu')|\leq L_6 (d(x,x')+\cW_1(\mu,\mu')).
    \end{align*}
\end{assumption}
\begin{remark}
    Because $S$ is assumed to be compact (thus so is $\barS$), Assumption \ref{Tr-f-Lip} implies that $f$ is uniformly bounded by a constant $L_7>0$.
\end{remark}
Before presenting the convergence result, we need the following lemma on non-asymptotic estimation of empirical measures, which is taken from \cite{Motte2022}. We denote by diam$(S)$ the diameter of the compact metric space $S$.
\begin{lemma}\label{MNrate}
    Define
   \begin{equation}\label{MNdef}
       M_N:= E_{\vec{Y}\sim \nu^{\otimes N}}\cW_1\left(\nu_{N,\vec{Y}},\nu\right),
   \end{equation}
   where $\vec{Y}=(Y_1,Y_2,\cdots,Y_N)$ is a vector of i.i.d. random variables with distribution $\nu$, $E_{\vec{Y}\sim \nu^{\otimes N}}$ is the corresponding expectation operator, and $\nu_{N,\vec{Y}}:= \frac{1}{N}\sum_{j\in [N]}\delta_{Y_j}$ is the empirical measure. Then, we have the following results:
   \begin{itemize}
       \item[(1).] If $S\subset \mR^d$ for some $d\geq 1$, then: $M_N=O(N^{-1/2})$ for $d=1$, $M_N=O(N^{-1/2}\log(1+N))$ for $d=2$, and $M_N=O(N^{-1/d})$ for $d\geq 3$.
       \item[(2).] If $S$ is finite, then $M_N=O(N^{-1/2})$.
       \item[(3).] If for all $\delta>0$, the smallest number of balls with radius $\delta$ covering $S$ is smaller than $O\left(\left({\rm diam}(S)/\delta    \right)^\theta\right)$ for some $\theta>2$, then $M_N=O(N^{-1/\theta})$.
   \end{itemize}
\end{lemma}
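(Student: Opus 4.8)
The plan is to reduce the statement to the classical empirical-measure estimate in Wasserstein distance (the content cited from \cite{Motte2022}) and to carry out the short multiscale (dyadic-partition) argument that unifies all three regimes through a single covering-number bound. Since $S$ is compact, write $D:=\mathrm{diam}(S)$ and fix, for each level $n\ge 0$, a Borel partition $\{B_{n,1},\dots,B_{n,k_n}\}$ of $S$ into $k_n$ pieces of diameter at most $D2^{-n}$, where $k_n$ is controlled by the covering number of $S$ at scale $D2^{-n}$. The first step is a multiscale upper bound for $\cW_1$ itself: using Kantorovich--Rubinstein duality and approximating a $1$-Lipschitz test function by a function that is constant on each block of the level-$n$ partition (equivalently, building an explicit hierarchical coupling that transports mass only within blocks of successive scales), one obtains, for all $\mu,\nu\in\cP(S)$,
\begin{equation}
\cW_1(\mu,\nu)\le C D \sum_{n\ge 0} 2^{-n}\sum_{i=1}^{k_n}\big|\mu(B_{n,i})-\nu(B_{n,i})\big|.
\end{equation}

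The second step specializes $\mu=\nu_{N,\vec Y}$ and takes the expectation. For a fixed block $B_{n,i}$, the quantity $\nu_{N,\vec Y}(B_{n,i})$ is an average of $N$ i.i.d.\ Bernoulli$(\nu(B_{n,i}))$ variables, so by Jensen's inequality and the binomial variance,
\begin{equation}
\mE\big|\nu_{N,\vec Y}(B_{n,i})-\nu(B_{n,i})\big|\le \sqrt{\frac{\nu(B_{n,i})}{N}}.
\end{equation}
Summing over $i$ and applying Cauchy--Schwarz together with $\sum_i \nu(B_{n,i})=1$ gives $\sum_i \mE|\nu_{N,\vec Y}(B_{n,i})-\nu(B_{n,i})|\le \sqrt{k_n/N}$, and hence
\begin{equation}
M_N \le \frac{CD}{\sqrt N}\sum_{n\ge 0} 2^{-n}\sqrt{k_n}.
\end{equation}

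The third step feeds in the covering-number hypotheses through this single estimate. Under (3), $k_n=O(2^{n\theta})$, so the summand behaves like $2^{n(\theta/2-1)}$. When $\theta<2$ the series converges and $M_N=O(N^{-1/2})$; this covers $S\subset\mR$ with $d=1$, and also the finite case (2) by a direct argument since there $k_n\le|S|$ is bounded. At the borderline $\theta=2$ (e.g.\ $S\subset\mR^2$) each term is of order $N^{-1/2}$, so the partial sum diverges logarithmically: truncating at the finest scale the $N$ samples can resolve, $2^{n^\ast}\asymp N^{1/2}$, yields a factor $n^\ast\asymp\log(1+N)$ and $M_N=O(N^{-1/2}\log(1+N))$. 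For $\theta>2$ (e.g.\ $S\subset\mR^d$, $d\ge 3$) the series diverges, so one truncates at level $n^\ast$: the resolved part is dominated by its top term $N^{-1/2}2^{n^\ast(\theta/2-1)}$, while the unresolved within-block transport contributes at most $D2^{-n^\ast}$; balancing the two by $2^{n^\ast}\asymp N^{1/\theta}$ produces $M_N=O(N^{-1/\theta})$.

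I expect the main obstacle to be the first step, namely establishing the multiscale inequality with a clean, scale-summable structure (the coupling/duality bookkeeping across the nested partitions), together with the truncation optimizations in the $\theta\ge 2$ regimes, where one must quantify precisely the residual cost of transporting mass inside blocks below the resolution level. Once these are in place, all three stated rates follow by plugging the appropriate counts ($k_n=O(2^{nd})$ for bounded subsets of $\mR^d$, $k_n\le|S|$ for finite $S$, and $k_n=O(2^{n\theta})$ under (3)) into the displayed bound for $M_N$; the full argument is the classical one carried out in \cite{Motte2022}.
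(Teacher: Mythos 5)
The paper offers no proof of this lemma at all: its ``proof'' is a citation to Lemma 2.1 and Remark 2.3 of \cite{Motte2022}, which in turn rest on the known empirical-measure rates (Fournier--Guillin for subsets of $\mR^d$, Boissard--Le Gouic for the covering-number case). Your proposal reconstructs the underlying classical argument from scratch: the multiscale inequality $\cW_1(\mu,\nu)\le CD\sum_{n}2^{-n}\sum_i|\mu(B_{n,i})-\nu(B_{n,i})|$ over nested partitions, the Bernoulli-variance and Cauchy--Schwarz step giving $\sqrt{k_n/N}$ per scale, and the truncation at $2^{n^*}\asymp N^{1/2}$ (for $\theta=2$) or $2^{n^*}\asymp N^{1/\theta}$ (for $\theta>2$). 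All of these steps are correct: the per-block bound $\mE|\nu_{N,\vec Y}(B)-\nu(B)|\le\sqrt{\nu(B)/N}$, the convergence of the series for $\theta<2$, the logarithmic factor at the borderline $\theta=2$, and the balancing $N^{-1/2}2^{n^*(\theta/2-1)}\asymp 2^{-n^*}$ for $\theta>2$ all check out, and compactness of $S$ (a standing assumption of the paper) spares you any moment conditions. So your route is sound and genuinely more informative than the paper's, at the cost of length; the citation buys the paper brevity and offloads all constants and constructions to the literature.

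One step, which you flag as the ``main obstacle'' but leave unresolved, deserves to be made explicit because it is the only place the argument could genuinely fail: the multiscale inequality requires the partitions $\{B_{n,i}\}$ to be \emph{nested}, and under hypothesis (3) a global covering-number bound does not directly produce nested partitions with $k_n=O(2^{n\theta})$. The naive fix --- taking common refinements of scale-$D2^{-n}$ coverings --- multiplies block counts across scales, giving $k_n\lesssim C^{n}2^{\theta n(n+1)/2}$, which destroys the estimate. The standard repair is a tree construction: choose a $D2^{-n}$-net $\mathcal{N}_n$ of $S$ for each $n\le n^*$, assign each point of $S$ to its nearest center in $\mathcal{N}_{n^*}$, assign each center in $\mathcal{N}_{m+1}$ a parent in $\mathcal{N}_m$ within distance $D2^{-m}$, and let the level-$n$ blocks be the preimages of the level-$n$ ancestors. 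These partitions are nested by construction, have $k_n\le|\mathcal{N}_n|=O(2^{n\theta})$ blocks, and the chain of parent jumps inflates block diameters only by a constant factor ($\le 3D2^{-n}$, say); since you truncate at $n^*$ anyway, building the tree only up to level $n^*$ suffices. In cases (1) nestedness is free from dyadic cubes of $\mR^d$, and case (2) follows even more simply from $\cW_1\le \mathrm{diam}(S)\,d_{\rm TV}$ plus the same Bernoulli bound, so only case (3) needs this insertion. With that paragraph added, your proof is complete and self-contained.
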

\begin{proof}
    See Lemma 2.1 and Remark 2.3 of \cite{Motte2022}.
\end{proof}
\begin{remark}
    In \eqref{MNdef}, we use the notation $E_{\vec{Y}\sim \nu^{\otimes N}}$ instead of $\mE$ to avoid confusion with the expectation of probability measure $\mP$, which includes all randomnesses considered in this section. However, when considering the mean-field limit, randomnesses from $Z^0$ and $U$ are not aggregated. A more precise expression of $M_N$ is
     \[
    M_N := \sup_{\nu\in \barS} \int_{S^N}\cW_1\left(\frac{1}{N}\sum_{j\in [N]}\delta_{y_j},\nu   \right)\nu^{\otimes N}(\md y_1,\md y_2,\cdots,\md y_N),
    \]
    where $\nu^{\otimes N}:= \nu\times \nu\times \cdots \times \nu$ ($N$ times ) is the product measure. However, for ease of presentation, we use \eqref{MNdef}.
\end{remark}

\begin{theorem}\label{finiteconvergence}
    Under Assumption \ref{Tr-f-Lip}, for any given $\phi\in \F^{\rm Lip}_S$, $k\in \mT$, $i\in [N]$ and $\lambda>0$, we have that
    \begin{align}
    &\mE d(X^{i,N,\phi}_k,X^{i,\phi}_k)\leq C_1(k,L_5,\|\phi\|_{\rm Lip})M_N,\label{Xconvergence}\\
    &\mE |J^{i,N,\phi}_\lambda(\xi^i)-J^{i,\phi}_\lambda(\xi^i)|\leq C_2(\lambda, L_5,L_6,L_7,\|\phi\|_{\rm Lip})M_N,\label{valueconvergence}
    \end{align}
    where $J^{i,N,\phi}_\lambda$ and $J^{i,\phi}_\lambda$ are defined in the same way as in \eqref{JNdef} and \eqref{JlimitMDP} with the discount function replaced by $\delta_\lambda:= \delta(k)\left( \frac{1}{1+\lambda}   \right)^{k^2}$, and $M_N$ together with its convergence rate is given in Lemma \ref{MNrate}.  
\end{theorem}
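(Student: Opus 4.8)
The plan is to establish \eqref{Xconvergence} by a coupling-plus-induction argument, and then to deduce \eqref{valueconvergence} by telescoping the discounted rewards against the estimate \eqref{Xconvergence}. The structural fact I would exploit is that both the finite-$N$ system \eqref{transtition-N-MDP} and the limit system \eqref{transition-limitMDP} are driven by the \emph{same} realizations of the initial states $\xi^i$, the idiosyncratic noises $Z^i_k$, the common noise $Z^0_k$, and the centralized stopping devices $U_k$. Writing $\mu^N_k:=\frac{1}{N}\sum_{j\in[N]}\delta_{X^{j,N,\phi}_k}$ for the empirical measure of the $N$-system, $\bar\mu^N_k:=\frac{1}{N}\sum_{j\in[N]}\delta_{X^{j,\phi}_k}$ for the empirical measure of the coupled limit copies, and $\mu_k:=\mP^0_{X^{i,\phi}_k}$ for the conditional law appearing in \eqref{transition-limitMDP}, I set $\Delta_k:=\mE\,d(X^{i,N,\phi}_k,X^{i,\phi}_k)$ and aim for a linear recursion $\Delta_{k+1}\le A\,\Delta_k+B\,M_N$ with $\Delta_0=0$, which yields $\Delta_k\le C_1(k,L_5,\|\phi\|_{\rm Lip})M_N$ with $C_1$ growing geometrically in $k$.

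To obtain the recursion I would analyze the one-step transition by conditioning on the history $\cG_k$ up to time $k$ (including the common noise realized through step $k$) and splitting according to whether the two systems take the same stopping action at step $k+1$. Since the centralized device is $U_{k+1}\sim U[0,1]$ and $\phi_N(\vec X^{N,\phi}_k)=\phi(\mu^N_k)$, the two actions disagree exactly when $U_{k+1}$ falls between $\phi(\mu^N_k)$ and $\phi(\mu_k)$, an event of $\cG_k$-conditional probability $|\phi(\mu^N_k)-\phi(\mu_k)|\le\|\phi\|_{\rm Lip}\cW_1(\mu^N_k,\mu_k)$; on this event $d(X^{i,N,\phi}_{k+1},X^{i,\phi}_{k+1})$ is bounded by a constant $C_S$ controlling distances in the compact state space $S\cup\{\triangle_S\}$. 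On the complementary event the actions agree: if both stop, the distance is $0$ since both copies sit at $\triangle_S$; if both continue, I integrate the idiosyncratic noise $Z^i_{k+1}$ and invoke the Lipschitz bound of Assumption \ref{Tr-f-Lip} to control the distance by $L_5\big(d(X^{i,N,\phi}_k,X^{i,\phi}_k)+\cW_1(\mu^N_k,\mu_k)\big)$. It remains to estimate $\mE\cW_1(\mu^N_k,\mu_k)$, which I split via the triangle inequality into $\cW_1(\mu^N_k,\bar\mu^N_k)+\cW_1(\bar\mu^N_k,\mu_k)$: the first piece is at most $\frac{1}{N}\sum_j d(X^{j,N,\phi}_k,X^{j,\phi}_k)$ and has expectation $\Delta_k$ by exchangeability of the agents, while for the second piece I use that, conditionally on the common noise, the copies $\{X^{j,\phi}_k\}_{j\in[N]}$ are i.i.d. with law $\mu_k$, so that Lemma \ref{MNrate} applied conditionally gives $\mE\cW_1(\bar\mu^N_k,\mu_k)\le M_N$. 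Collecting terms produces the desired recursion with $A$ and $B$ depending only on $L_5$, $C_S$ and $\|\phi\|_{\rm Lip}$.

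For \eqref{valueconvergence}, I would bound the discounted sum termwise. Using $\phi_N(\vec X)=\phi(\mu^N)$, adding and subtracting $f(X^{i,\phi}_k,\mu_k)\phi(\mu^N_k)$, and applying $|\phi|\le1$, $|f|\le L_7$ together with the Lipschitz bounds on $f$ and $\phi$, each reward difference at step $k$ is dominated by $C\big(\Delta_k+\mE\cW_1(\mu^N_k,\mu_k)\big)\le C'(L_6,L_7,\|\phi\|_{\rm Lip})\,C_1(k,L_5,\|\phi\|_{\rm Lip})\,M_N$. Summing against $\delta_\lambda(k)=\delta(k)\big(\tfrac{1}{1+\lambda}\big)^{k^2}$ and using that the super-exponential factor $(1+\lambda)^{-k^2}$ dominates the geometric growth of $C_1(k,\cdot)$ makes the series convergent, which furnishes the finite constant $C_2(\lambda,L_5,L_6,L_7,\|\phi\|_{\rm Lip})$; the $\lambda$-dependence enters precisely through this summability.

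The main obstacle I anticipate is the interplay between the common noise and the stopping discontinuity. The propagation-of-chaos estimate for $\cW_1(\bar\mu^N_k,\mu_k)$ is only available \emph{after} conditioning on the common noise, where the limit copies regain their conditional i.i.d. structure and Lemma \ref{MNrate} becomes applicable with the uniform (in $\nu$) rate $M_N$; getting this conditioning right, and confirming that the same $\mu_k$ governs every agent, is the delicate point. The second difficulty is that the centralized stopping is a discontinuous functional of the state, so a mismatch of stopping actions can make two otherwise-close copies jump a full diameter apart; this is absorbed only because the device $U_{k+1}$ is uniform, which turns the mismatch probability into the Lipschitz gap $\|\phi\|_{\rm Lip}\cW_1(\mu^N_k,\mu_k)$ and keeps the recursion linear. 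Finally, the geometric-in-$k$ blow-up of $C_1(k,\cdot)$ would preclude a finite value bound under the bare discount $\delta$; it is the extra regularization factor $(1+\lambda)^{-k^2}$ that rescues summability, which is exactly why the constant in \eqref{valueconvergence} must depend on $\lambda$.
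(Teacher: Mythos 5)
Your proposal is correct and takes essentially the same approach as the paper's proof: the same coupling of the two systems through the shared noises and stopping device, the same induction yielding the linear recursion $\Delta_{k+1}\le A\,\Delta_k+B\,M_N$ via the action-agreement/mismatch split (with the mismatch probability controlled by $\|\phi\|_{\rm Lip}\cW_1$ and the diameter of $S$), and the same triangle-inequality decomposition of $\mE\,\cW_1(\mu^N_k,\mu_k)$ into an empirical-coupling term bounded by $\Delta_k$ plus a conditionally-i.i.d. term bounded by $M_N$ via Lemma \ref{MNrate}. The value estimate \eqref{valueconvergence} is likewise obtained exactly as in the paper, by termwise Lipschitz bounds summed against $\delta_\lambda$, with the factor $(1+\lambda)^{-k^2}$ absorbing the geometric growth of $C_1(k)$.
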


\begin{proof}[Proof of Theorem \ref{finiteconvergence}]
    We first prove \eqref{Xconvergence} by the mathematical induction. The statement clearly holds for $k=0$ because $X^{i,N,\phi}=X^{i,\phi}=\xi^i$. Suppose now \eqref{Xconvergence} holds for $k$, we can estimate $\mE d(X^{i,N,\phi}_{k+1},X^{i,\phi}_{k+1})$ by
    \begin{align}
    \mE d(X^{i,N,\phi}_{k+1},X^{i,\phi}_{k+1})\leq & \mE d(X^{i,N,\phi}_{k+1},X^{i,\phi}_{k+1})\ind_{\{U_{k+1}> \phi_N(\vec{X}_k^{N,\phi})\vee \phi(\mP^0_{X^{i,N,\phi}_k})\}} \nonumber\\
    &+\mE d(X^{i,N,\phi}_{k+1},X^{i,\phi}_{k+1})\ind_{\{U_{k+1}\in [\phi_N(\vec{X}_k^{N,\phi}), \phi(\mP^0_{X^{i,N,\phi}_k})]\}} \nonumber\\
    \leq & \mE d\left(T^r_0\left(X^{i,N,\phi}_k,\nu_{N,\vec{X}^{N,\phi}_k},Z^i_{k+1},Z^0_{k+1}\right),T^r_0\left(X^{i,\phi}_k,\mP^0_{X^{i,\phi}_k},Z^i_{k+1},Z^0_{k+1}\right)\right)\label{dXest1}\\
    &+ 2{\rm diam}(S)\mE | \phi_N(\vec{X}_k^{N,\phi})-\phi(\mP^0_{X^{i,N,\phi}_k})    |\nonumber\\
    \leq & L_5\mE\left( d\left(X^{i,N,\phi}_k, X^{i,\phi}_k \right)+\cW_1(\nu_{N,\vec{X}^{N,\phi}_k},  \mP^0_{X^i_k} ) \right)\nonumber\\
    &+2{\rm diam}(S)\|\phi\|_{\rm Lip}\mE\cW_1(\nu_{N,\vec{X}^{N,\phi}_k},  \mP^0_{X^{i,\phi}_k}).\nonumber
    \end{align}
Recall the notation $\nu_{N,\vec{y}}:= \frac{1}{N}\sum_{j\in [N]}\delta_{y_j}$ for any $N$-dimensional vector $\vec{y}$. We claim that, for two vectors $\vec{y}, \vec{y'}$,  
\begin{equation}\label{empiricalest}
    \cW_1(\nu_{N,\vec{y}},\nu_{N,\vec{y'}})\leq \frac{1}{N}\sum_{j\in [N]}d(y_j,y'_j).
\end{equation}
Indeed, let {\bf j} be a uniform random variable on $[N]$, then $(y_{\bf j}, y'_{\bf j})$ is a coupling of $\nu_{N,\vec{y}},\nu_{N,\vec{y'}}$. It follows from the definition of Wasserstein metric that 
\[
 \cW_1(\nu_{N,\vec{y}},\nu_{N,\vec{y'}})\leq E_{\bf j}|y_{\bf j}-y'_{\bf j}|\leq \frac{1}{N}\sum_{j\in [N]}d(y_j,y'_j),
\]
which gives \eqref{empiricalest}. 

Now, thanks to \eqref{empiricalest}, we further obtain that
    \begin{equation}\label{dXest-W1} 
    \begin{aligned}
      \mE\cW_1(\nu_{N,\vec{X}^{N,\phi}_k},  \mP^0_{X^{i,N}_k})&\leq   \mE\cW_1(\nu_{N,\vec{X}^{N,\phi}_k}, \nu_{N,\vec{X}^{\phi}_k})+ \mE\cW_1(\nu_{N,\vec{X}^{\phi}_k}, \mP^0_{X^{i,\phi}_k})\\
      &\leq \mE d(X^{i,N,\phi}_k,X^{i,\phi}_k)+M_N,
    \end{aligned}
    \end{equation}
where we have used the law of iterated expectation that
\[
\mE\cW_1(\nu_{N,\vec{X}^{\phi}_k}, \mP^0_{X^{i,\phi}_k})=\mE  E_{\vec{X}\sim (\mP^0_{X^{i,\phi}_k})^{\otimes N} } \cW_1(\nu_{N,\vec{X}}, \mP^0_{X^{i,\phi}_k})\leq M_N.
\]
Plugging \eqref{dXest-W1} back into \eqref{dXest1} and using the induction hypothesis of step $k$, we get that
\begin{align*}
\mE d(X^{i,N,\phi}_{k+1},X^{i,\phi}_{k+1})\leq & C'_1 \mE d(X^{i,N,\phi}_k,X^{i,\phi}_k) + C_2'M_N \\
\leq & (C'_1 C_1(k)+C'_2)M_N,
\end{align*}
where $C'_1:=2L_5+2{\rm diam}(S)\|\phi\|_{\rm Lip} $ and $C'_2:=L_5+2{\rm diam}(S)\|\phi\|_{\rm Lip}$. Taking $C_1(k+1):=C'_1 C_1(k)+C'_2$, we complete the proof of \eqref{Xconvergence}. Moreover, we can take the explicit form of $C_1(k)=\frac{(C_1')^kC_2'}{C_1'-1}$.

We next show \eqref{valueconvergence}. By \eqref{JNdef}, \eqref{JlimitMDP}, \eqref{dXest-W1}, Assumption \ref{Tr-f-Lip} and direct computations, one can derive that
\begin{align*}
|J^{i,N,\phi}_\lambda(\xi^i)-J^{i,\phi}_\lambda(\xi^i)|\leq &\sum_{k=0}^\infty \delta_\lambda(k)\mE \left|   f(X^{i,N,\phi}_k,\nu_{N,\vec{X}^{N,\phi}_k})\phi(\nu_{N,\vec{X}^{N,\phi}_k})- f(X^{i,\phi}_k,\mP^0_{X^{i,\phi}_k})\phi(\mP^0_{X^{i,\phi}_k})  \right|\\
\leq & L_6\sum_{k=0}^\infty \delta_\lambda(k)\left[\mE d(X^{i,N,\phi}_k,  X^{i,\phi})+\mE \cW_1(\nu_{N,\vec{X}^{N,\phi}_k},\mP^0_{X^{i,\phi}_k})\right]\\
&+L_7 \|\phi\|_{\rm Lip} \sum_{k=0}^\infty \delta_\lambda(k)\cW_1(\nu_{N,\vec{X}^{N,\phi}_k},\mP^0_{X^{i,\phi}_k})\\
\leq & C'_3 \sum_{k=0}^\infty \delta_\lambda(k)((C'_1)^k M_N +M_N)\\
\leq & \left(C'_4 + C_3' \sum_{k=0}^\infty \delta_\lambda(k)(C'_1)^k      \right)M_N,
\end{align*}
where $C_4',C_3'$ are constants depending on $L_5$, $L_6$, $L_7$ and $\|\phi\|_{\rm Lip}$. Clearly, $\sum_{k=0}^\infty \delta_\lambda(k)(C'_1)^k<\infty$ for $\lambda>0$. Therefore, choosing $C_2=C'_4 + C_3' \sum_{k=0}^\infty \delta_\lambda(k)(C'_1)^k  $ yields to the desired \eqref{valueconvergence}.
\end{proof}

\begin{remark}\label{rmk:finiteconvergence}
    {  The proof of Theorem \ref{finiteconvergence} is inspired by \cite{Motte2022}. Nevertheless, our proof addresses some distinct technical challenges that only appear in our framework.} We establish convergence results for a given {\it Lipschitz} strategy and we also need to consider a {\it discount regularization} $\delta_\lambda$ (see also Section \ref{proofs} of this paper). There are two main reasons. Firstly, our mean-field MDP is time-inconsistent, and the closed-loop control is more natural and convenient to define the sub-game time-consistent relaxed equilibria. A key difficulty of the closed-loop formulation is that controls vary with state variables, which requires us to consider the convergence of controls as $N\to \infty$. In contrast, in the convergence analysis of \cite{Motte2022} (see Theorem 2.1 therein), the control is fixed as $\alpha^{i,\pi}$ when $N\to\infty$. Secondly, a reason to consider the regularized discount  $\delta_\lambda$ is that our problem is an infinite horizon stopping problem under non-exponential discount. The argument used in the proof of Theorem 2.1 in \cite{Motte2022} relies heavily on the structure of exponential discount and is not applicable to general $\delta$. Although some arguments may hold valid in our case if we further assume that $\delta$ decays sufficiently fast, we choose not to impose such restrictive structural conditions on $\delta$. In subsection \ref{epsilon-eq-finite}, it is shown that Theorem \ref{finiteconvergence} is sufficient in finding an $\epsilon$-equilibria for the multi-agent problem. We emphasize that with positive $\lambda$, the convergence rate of $J_\lambda$ is improved to $M_N$ compared to $M_N^\gamma$ with a constant $\gamma\leq 1$. As a price to pay, the constant $C_2$ may explode when $\lambda\to 0$.
\end{remark}

\subsection{The Relationship between ({\bf Limit-MDP}) and ({\bf MF-MDP})}\label{relationlimit}
In this subsection, we establish the connection between ({\bf Limit-MDP}) in subsection \ref{convergencesubsection} and ({\bf MF-MDP}) in the main body of this paper. We consider the function $T_0:\barS\times \cZ\to \barS$ defined by 
\begin{equation}\label{relationT0}
T_0(\mu,z):= T^r_0(\cdot,\mu,\cdot,z)_\# (\mu \times \cL(\cZ)'),
\end{equation}
where $T^r_0$ is from \eqref{Tr0def}. We recall that $\cL(\cZ)'$ is the distribution of idiosyncratic noises. If we assume $\triangle:= \delta_{\triangle_S}$, and define a function $T:\barS\times \{0,1\}\times \cZ\to \barS $ as in \eqref{T0def}, it is clear that
\[
T(\mu,a,z)=T^r(\cdot,\mu,a,\cdot,z)_\#(\mu\times \cL(\cZ)').
\]
Consider $\mP^0:= \mP(\cdot| \{U_k\}_{k\in \mT}, \{Z^0_k\}_{k\in \mT})$ as a probability measure on $\Omega$. From the first equation in \eqref{transition-limitMDP}, we derive
\begin{align*}
\mP^0_{X^{i,\phi}_{k+1}}&=(X^{i,\phi}_{k+1})_\#\mP^0 \\
&=\left(T^r\left(X^{i,\phi}_k,\mP^0_{X^{i,\phi}_k},\ind_{\{U_{k+1}\leq \phi(\mP^0_{X^{i,\phi}_k})\}},Z^i_{k+1},Z^0_{k+1}\right)\right)_\# \mP^0\\
&=T^r_\# \circ \left(X^{i,\phi}_k,\mP^0_{X^{i,\phi}_k},\ind_{\{U_{k+1}\leq \phi(\mP^0_{X^{i,\phi}_k})\}},Z^i_{k+1},Z^0_{k+1}\right)_\# \mP^0\\
&=\left(T^r\left(\cdot,\mP^0_{X^{i,\phi}_k},\ind_{\{U_{k+1}\leq \phi(\mP^0_{X^{i,\phi}_k})\}},\cdot,Z^0_{k+1}\right)\right)_\#(\mP^0_{X^{i,\phi}_k}\times \cL(\cZ)')\\
&=T(\mP^0_{X^{i,\phi}_k},\ind_{\{U_{k+1}\leq \phi(\mP^0_{X^i_k})\}},Z^0_{k+1}),
\end{align*}
where we have used facts that $U_{k+1}$ and $Z^0_{k+1}$ are deterministic under $\mP^0_{X^{i,\phi}_k}$, and $Z^i$ is independent from $Z^0$. We denote $\mu_k:= \mP^0_{X^{i,\phi}_k}$, it then holds that $\mu_0=\nu_0$ and 
\[
\mu_{k+1}=T(\mu_k,\ind_{\{U_{k+1}\leq \phi(\mu_k)\}},Z^0_{k+1}).
\]
Note that given $\mu_k$, $\ind_{\{U_{k+1}\leq \phi(\mu_k)\}}$ has the same distribution as $\xi^{\phi(\mu_k)}$, which is a Bernoulli random variable with success probability $\phi(\mu_k)$. Choosing $r(\mu):= \int_S f(x,\mu)\mu(\md x)$, and by \eqref{transition}, \eqref{JlimitMDP} and the definition of $\mP^{\mu,\phi}$, we conclude that
\begin{equation}\label{valuerelation}
    J^{i,\phi}(\xi^i)=\sum_{k=0}^\infty \delta(k)\mE^{\nu_0,\phi}r(\mu_k)\phi(\mu_k)=J^{\phi}(\nu_0).
\end{equation}
In other words, ({\bf Limit-MDP}) naturally connects to ({\bf MF-MDP}) by lifting the state space to $\barS$. In fact, this is our motivation to consider the mean-field formulation at the beginning. We now discuss how assumptions on $T^r_0$ induce assumptions on $T_0$.
\begin{proposition}\label{assumptionrelation}
    Assumption \ref{Tr-f-Lip} implies Assumptions \ref{as1} and \ref{Tassumption2}.
\end{proposition}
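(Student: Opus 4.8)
The plan is to establish the two implications separately, in each case reducing the desired Lipschitz estimate on the mean-field objects $T_0$ and $r$ to the postulated Lipschitz bounds on the representative-agent data $T^r_0$ and $f$, via the defining identity \eqref{relationT0}, namely $T_0(\mu,z)=T^r_0(\cdot,\mu,\cdot,z)_\#(\mu\times\cL(\cZ)')$, and the definition $r(\mu)=\int_S f(x,\mu)\,\mu(\md x)$. Throughout I would first work on $\cP(S)$ and then dispose of the stopped state $\triangle$ as an easy boundary case at the end.

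For Assumption \ref{Tassumption2}, I would fix $z\in\cZ$ and $\mu,\mu'\in\cP(S)$ and bound $\cW_1(T_0(\mu,z),T_0(\mu',z))$ by exhibiting an explicit coupling. The key step is to let $(X,X')$ be an optimal coupling of $(\mu,\mu')$, so that $\mE\,d(X,X')=\cW_1(\mu,\mu')$, and to draw a single idiosyncratic noise $Z'\sim\cL(\cZ)'$ independent of $(X,X')$, \emph{shared} by both copies. Then $(T^r_0(X,\mu,Z',z),\,T^r_0(X',\mu',Z',z))$ is a coupling of $T_0(\mu,z)$ and $T_0(\mu',z)$, whence
\[
\cW_1(T_0(\mu,z),T_0(\mu',z))\leq \mE\,d\big(T^r_0(X,\mu,Z',z),T^r_0(X',\mu',Z',z)\big).
\]
Conditioning on $(X,X')$ and integrating out the independent $Z'$ turns the inner expectation into precisely the $z'$-averaged quantity controlled by Assumption \ref{Tr-f-Lip}, yielding the conditional bound $L_5(d(X,X')+\cW_1(\mu,\mu'))$; taking expectations gives $\cW_1(T_0(\mu,z),T_0(\mu',z))\leq 2L_5\,\cW_1(\mu,\mu')$, so $L_4=2L_5$ works on $\cP(S)$.

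For Assumption \ref{as1}, I would write $r(\mu)-r(\mu')=\int_S[f(x,\mu)-f(x,\mu')]\,\mu(\md x)+\int_S f(x,\mu')\,(\mu-\mu')(\md x)$. The first integral is bounded by $L_6\cW_1(\mu,\mu')$ using the Lipschitz bound of $f$ in its measure argument (with $x$ held fixed) together with $\mu$ being a probability measure; the second is bounded by $L_6\cW_1(\mu,\mu')$ by Kantorovich--Rubinstein duality, since $x\mapsto f(x,\mu')$ is $L_6$-Lipschitz (the measure arguments now coincide). Hence $r$ is $2L_6$-Lipschitz on $\cP(S)$, and the boundedness $|r|\leq L_7$ follows at once from $|f|\leq L_7$.

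Finally, I would close the gap at $\triangle$. Since $T_0(\triangle,z)\equiv\triangle$ and $\cW_1(\cdot,\triangle)\equiv C$ on $\cP(S)$, for $\mu\in\cP(S)$ one has $\cW_1(T_0(\mu,z),\triangle)=C=\cW_1(\mu,\triangle)$, and likewise $|r(\mu)-r(\triangle)|=|r(\mu)|\leq L_7=(L_7/C)\,\cW_1(\mu,\triangle)$; enlarging the constants to $L_4=\max(2L_5,1)$ and $L_2=\max(2L_6,L_7/C)$ absorbs these cases. The only genuinely delicate point is the coupling in the first step: the hypothesis on $T^r_0$ controls the distance only \emph{after} averaging over the idiosyncratic noise $z'$, so one must couple the two copies through a common $Z'$ rather than independent draws in order to invoke it; with that choice the remainder is routine bookkeeping.
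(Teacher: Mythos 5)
Your proof is correct. For the reward function $r$, your add--subtract decomposition combined with Kantorovich--Rubinstein duality is essentially identical to the paper's argument and yields the same constant $2L_6$. For the transition map $T_0$, however, you take a genuinely different route: you build an explicit coupling --- an optimal coupling $(X,X')$ of $(\mu,\mu')$ together with a single \emph{shared} idiosyncratic noise $Z'\sim\cL(\cZ)'$ --- and push it through $T^r_0$, whereas the paper works on the dual side, testing $T_0(\mu,z)$ and $T_0(\mu',z)$ against $1$-Lipschitz functions and splitting into a change-of-measure term and a change-of-$\mu$-argument term. Both give $L_4=2L_5$. Your coupling has a concrete advantage: Assumption \ref{Tr-f-Lip} controls distances only \emph{after} averaging over the idiosyncratic noise, and conditioning on $(X,X')$ and integrating out the common $Z'$ invokes exactly that averaged bound (the point you correctly flag as the delicate one). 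The paper's dual argument, as literally written, pushes the supremum over test functions inside the $z'$-integral, which would require a pointwise-in-$z'$ Lipschitz property of $T^r_0$ that is not assumed; it is repaired by averaging first, so that $x\mapsto\int_\cZ f(T^r_0(x,\mu,z',z))\,\cL(\cZ)'(\md z')$ is $L_5$-Lipschitz and duality applies, but your coupling sidesteps this issue entirely. Finally, you treat the stopped state $\triangle$ explicitly, which the paper leaves implicit; this costs nothing since $T_0(\cdot,z)$ fixes $\triangle$ and $\cW_1(\cdot,\triangle)\equiv C$ on $\cP(S)$, and it makes the statement genuinely hold on all of $\barS$ as claimed.
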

\begin{proof}
    We first prove the Lipschitz continuity of $r$. Under Assumption \ref{Tr-f-Lip}, for any $x,x'\in S$ and $\mu,\mu'\in \barS$, it holds that $|f(x,\mu)-f(x,\mu')|\leq L_6\cW_1(\mu,\mu')$ and $|f(x,\mu)-f(x',\mu)|\leq L_6d(x,x')$. Consequently,
    \begin{align*}
        |r(\mu)-r(\mu')|&\leq \left| \int_S f(x,\mu)\mu(\md x)-\int_S f(x,\mu)\mu'(\md x)\right|+\int_S |f(x,\mu)-f(x,\mu')|\mu'(\md x) \\
        &\leq 2L_6\cW_1(\mu,\mu').
    \end{align*}
    Let us denote by ${\rm Lip}_1$ the set of 1-Lipschitz functions on $S$. To show the Lipschitz continuity of $T_0$, we first note that, for any $f\in {\rm Lip}_1$, $x,x'\in S$, $\mu,\mu'\in \barS$ and $z,z'\in \cZ$,
   \begin{align*}
    &|f(T^r_0(x,\mu,z',z))-f(T^r_0(x',\mu,z',z))|\leq L_5d(x,x'),\\
\text{and}\ \   &|f(T^r_0(x,\mu,z',z))-f(T^r_0(x,\mu',z',z))|\leq L_5\cW_1(\mu,\mu').
   \end{align*}
   By definitions of the Wasserstein metric and of the push-forward operator, we have that
   \begin{align*}
    \cW_1(T_0(\mu,z),T_0(\mu',z))=&\sup_{f\in {\rm Lip}_1}\left|\int_S f(x)T^r_0(\cdot,\mu,\cdot,z)_\#(\mu\times \cL(\cZ)')(\md x)\right. \\
    &\left.-\int_S f(x)T^r_0(\cdot,\mu',\cdot,z)_\#(\mu'\times \cL(\cZ)')(\md x)                 \right|\\
    =&\sup_{f\in {\rm Lip}_1}\left| \int_{S\times \cZ} f(T^r_0(x,\mu,z',z))\mu(\md x)\cL(\cZ)(\md z')\right.\\
    &\left.-\int_{S\times \cZ} f(T^r_0(x,\mu',z',z))\mu'(\md x)\cL(\cZ)(\md z')\right|\\
    \leq &\int_\cZ \sup_{f\in {\rm Lip}_1}\left|\int_S f(T^r_0(x,\mu,z',z))\mu(\md x) -\int_S f(T^r_0(x,\mu,z',z))\mu'(\md x)           \right|\\
    &+\int_{S\times \cZ}|f(T^r_0(x,\mu,z',z))-f(T^r_0(x,\mu',z',z))| \mu'(\md x)\cL(\cZ)'(\md z')\\
    \leq & 2L_5\cW_1(\mu,\mu').
   \end{align*}
   The conclusion readily follows.
\end{proof}
We now show how Example \ref{exm1} can be constructed from ({\bf Limit-MDP}), hence Assumptions \ref{Tassumption} and \ref{dominate} can be verified. 
\begin{example}\label{exm1construct}
    Suppose $|S|=d\in  \mN$, $d\geq 2$. We assume that the common noise takes values in $\cZ:=\barS$, and the idiosyncratic noises take values in $[0,1]\times \{0,1\}$. We write $z'$ as $z'=(z'',l)\in [0,1]\times \{0,1\}$. It is also assumed that the idiosyncratic noises have the same distribution $\cL(\cZ)'= U_{[0,1]}\times \B(\epsilon)$ for $\epsilon>0$. Here, $U_{[0,1]}$ is the uniform distribution on $[0,1]$ and $\B(\epsilon)$ is the Bernoulli distribution with the success probability $\epsilon$. The slight modification does not change all results and proofs in this section because the space of idiosyncratic noises can actually be any measurable space instead of $\cZ$. Furthermore, we take a measurable map $R^r:[0,1]\times \barS\to S $, such that for any $z\in \barS$, $R^r(\cdot,z)_\# U_{[0,1]}=z$. Such an $R^r$ exists by Lemma 5.29 in \cite{carmona2018probabilistic}. Finally, we take a function $T^r_1:S\times \barS\times [0.1]\to S$ and consider that
    \[
T^r_0(x,\mu,z'',l,z):=\left\{
\begin{aligned}
   & T^r_1(x,\mu,z''), &l=0,\\
   & R^r(z'',z), &l=1.
\end{aligned}
\right.
    \]
 It is not hard to show in this example that $T_0(\mu,z)=(1-\epsilon)T_1(\mu)+\epsilon z$ with $T_1(\mu)=T^r_1(\cdot,\mu,\cdot)_\# (\mu\times U_{[0,1]})$. Thus the main result in Example \ref{exm1} follows.   
\end{example}
For an uncountable space $S$, it is very challenging to provide some general sufficient conditions such that Assumption \ref{dominate} holds, which will be left as our future research. The main obstacle lies in the fact that there is no canonical measure on the space $\barS=\cP(S)$ possessing certain invariance properties. This is the case even for $S=[0,1]$. Here, we only provide a concrete example when Assumption \ref{dominate} holds.
\begin{example}\label{dominateexm}
    This example is inspired by \cite{jaskiewicz2021markov}; see Example 4.8 therein. We consider a mean-field counterpart of it. Suppose that $S$ is a general compact metric space and let us fix two measures $\mu_1,\mu_2\in \cP(S)$. We work with $\cZ=[0,1]$ and $\cL(\cZ)=U_{[0,1]}$. For a function $\alpha_0:\barS\to [0,1]$, let us consider $T_0(\mu,z)=\alpha_\epsilon(\mu,z)\mu_1+(1-\alpha_\epsilon(\mu,z))\mu_2$, where $\alpha_\epsilon(\mu,z)=(1-\epsilon)\alpha_0(\mu)+\epsilon z$. This model can be constructed from ({\bf Limit-MDP}) in a similar way as in Example \ref{exm1construct} and we hence omit the details. Moreover, it shall be clear from the construction that we can take $\alpha_0(\mu)=\int_S \alpha(x,\mu)\mu(\md x)$ for some $\alpha$, and Assumption \ref{Tr-f-Lip} is satisfied if $\alpha$ is Lipschitz in $(x,\mu)$. 
    
    Now, for each test function $f\in C(\barS)$, we have
    \begin{align*}
        \int_{\barS} f(\nu) T_0(\mu,\cdot)_\# U_{[0,1]}(\md \nu)&=\int_0^1 f(T_0(\mu,z))\md z\\
        &=\int_0^1 f([(1-\epsilon)\alpha_0(\mu)+\epsilon z]\mu_1+[1-(1-\epsilon)\alpha_0(\mu)-\epsilon z]\mu_2 )\md z\\
        &=\int_0^1 f(z'\mu_1+(1-z')\mu_2)\frac{1}{\epsilon}\ind_{[(1-\epsilon)\alpha_0(\mu),(1-\epsilon)\alpha_0(\mu)+\epsilon]}(z')\md z'.
    \end{align*}
Let us introduce $L(z'):= z'\mu_1+(1-z')\mu_2$, $\sL_0:= L_\#U_{[0,1]}$, $I_\epsilon(\mu):= [(1-\epsilon)\alpha_0(\mu),(1-\epsilon)\alpha_0(\mu)+\epsilon]$, $B_\epsilon(\mu):= \{L(z'):z'\in I_\epsilon(\mu)   \}$. With these notations, we have
\begin{align*}
\int_{\barS} f(\nu)T_0(\mu,\cdot)_\#U_{[0.1]}(\md \nu)&=\int_0^1 f(L(z'))\frac{1}{\epsilon}\ind_{I_\epsilon(\mu)}(z')\md z'\\
&=\int_{\barS} f(\nu) \frac{1}{\epsilon}\ind_{B_\epsilon(\mu)}(\nu)\sL_0(\md \nu).
\end{align*}
In other words, for any $\mu\in \barS$, $T_0(\mu,\cdot)_\# U_{[0,1]}$ has the density function $f^0(\mu,\cdot)= \frac{1}{\epsilon}\ind_{B_\epsilon(\mu)}$ with respect to $\sL_0$, a fixed probability measure on $\barS$. Moreover, it holds that
\begin{align*}
    \int_{\barS} |f^0(\mu,\nu)-f^0(\mu',\nu)| \sL_0(\md \nu)&=\frac{1}{\epsilon}\int_{\barS}|\ind_{B_\epsilon(\mu)}(\nu)-\ind_{B_\epsilon(\mu')}(\nu)|\sL_0(\md \nu)\\
    &=\frac{1}{\epsilon}\int_0^1 |\ind_{I_\epsilon(\mu)}-\ind_{I_\epsilon(\mu')}|\md z\\
    &\leq \frac{2(1-\epsilon)}{\epsilon}|\alpha_0(\mu)-\alpha_0(\mu')|.
\end{align*}
Consequently, Assumption \ref{dominate} is satisfied if $\alpha_0$ is Lipschitz, which is clear from the Lipschitz continuity of $\alpha$ in ({\bf Limit-MDP}). This example can be readily generalized to other models where $T_0(\mu,z)$ can be parameterized by a finite-dimensional parameter space.
\end{example}

\subsection{Approximated Equilibrium of ({\bf N-MDP})}\label{epsilon-eq-finite}
In this section, we combine results from Section \ref{exisresults} and Subsection \ref{convergencesubsection} to obtain an approximated equilibrium for the multi-agent MDP problem ({\bf N-MDP}). We first define the approximated equilibrium.

\begin{definition}\label{Nepsilondef}
    For fixed $\epsilon>0$ and $N\in \mN$, $\phi^*\in \F_S$ is said to be an $\epsilon$-equilibrium of ({\bf N-MDP}) if for any $\psi\in [0,1]$, $\nu_0\in \barS$ and i.i.d. random variables $\{\xi^i\}_{i\in [N]}$ with the distribution $\nu_0$, we have
    \[
    \frac{1}{N}\sum_{i\in [N]} J^{i,N,\psi\oplus_1\phi^*}(\xi^i)\leq \frac{1}{N}\sum_{i\in [N]} J^{i,N,\phi^*}(\xi^i)+\epsilon.
    \]
    Here, we recall that $\psi\oplus_1\phi^* \in \F$ is given by the sequence $\{\phi_k\}_{k\in \mT}$ with $\phi_0=\psi$, $\phi_k=\phi$, $k\geq 1$.
\end{definition}
\begin{remark}
    Definition \ref{Nepsilondef} has a similar interpretation as Definition \ref{originalequilibrium}. Indeed, assessed by the average awards of all agents, the social planner's incentive to deviate from the approximated equilibria can be arbitrarily small. This is in line with the classical definition of equilibria in single-agent problems. We remark that in the multi-agent problem, we only consider strategies induced by $\phi\in \F_S$, i.e., mean-field strategies. In addition to technical consideration, we think this setting is also realistic because mean-field strategies have anonymity: the social planner needs not to know the precise states of every single agent but only how they are distributed. Such information is often accessible via statistical methods. 
\end{remark}
We now present the main result of this subsection.
\begin{theorem}\label{epsiloneq-N}
   For any $\epsilon>0$, $\phi_\lambda$ given by Corollary \ref{extregulareq} is an $\epsilon$-equilibrium for ({\bf N-MDP}) with $N$ agents, provided that $\lambda$ is sufficiently small and $N$ is sufficiently large.
\end{theorem}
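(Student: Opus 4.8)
The plan is to use the Limit-MDP of Subsection \ref{convergencesubsection} as a bridge between ({\bf N-MDP}) and ({\bf MF-MDP}), combining three ingredients: the mean-field $\epsilon$-equilibrium property of $\phi_\lambda$ from Theorem \ref{epsiloneq-MF}, the identification $J^{i,\phi}(\xi^i)=J^\phi(\nu_0)$ in \eqref{valuerelation}, and the quantitative N-to-Limit convergence of Theorem \ref{finiteconvergence}. Recall $\phi_\lambda\in\F^{\rm Lip}_{S}$ by Corollary \ref{extregulareq}, so both $\phi_\lambda$ and the one-step perturbation $\psi\oplus_1\phi_\lambda$ are Lipschitz in the sense required by Theorem \ref{finiteconvergence}. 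First I would fix $\epsilon>0$, $\nu_0\in\barS$, $\psi\in[0,1]$, and establish the key auxiliary fact that, for every fixed Lipschitz policy $\phi$, $\frac{1}{N}\sum_{i\in[N]}J^{i,N,\phi}(\xi^i)\to J^\phi(\nu_0)$ as $N\to\infty$; here \eqref{valuerelation} is first extended to the non-stationary policy $\psi\oplus_1\phi_\lambda$ by repeating the derivation of Subsection \ref{relationlimit} with a time-dependent stopping rate. Granting the auxiliary convergence, the theorem follows from the chain
\[
\frac{1}{N}\sum_i J^{i,N,\psi\oplus_1\phi_\lambda}(\xi^i)\le J^{\psi\oplus_1\phi_\lambda}(\nu_0)+\frac{\epsilon}{3}\le J^{\phi_\lambda}(\nu_0)+\frac{2\epsilon}{3}\le \frac{1}{N}\sum_i J^{i,N,\phi_\lambda}(\xi^i)+\epsilon,
\]
where the middle inequality is Theorem \ref{epsiloneq-MF} applied with tolerance $\epsilon/3$ (fixing $\lambda$ small), and the two outer inequalities use the auxiliary convergence with $N$ large.

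To prove the auxiliary convergence I would \emph{not} work with the original discount directly, since the coupling error $\mE\,d(X^{i,N,\phi}_k,X^{i,\phi}_k)$ in \eqref{Xconvergence} grows geometrically in $k$ and is only tamed by the super-geometric factor $(1+\lambda)^{-k^2}$ in the regularized discount $\delta_\lambda$. Instead, for an auxiliary parameter $\lambda'>0$ I would decompose
\[
\frac{1}{N}\sum_i J^{i,N,\phi}(\xi^i)-J^\phi(\nu_0)=\mathrm{(I)}+\mathrm{(II)}+\mathrm{(III)},
\]
with $\mathrm{(I)}:=\frac{1}{N}\sum_i(J^{i,N,\phi}(\xi^i)-J^{i,N,\phi}_{\lambda'}(\xi^i))$, $\mathrm{(II)}:=\frac{1}{N}\sum_i(J^{i,N,\phi}_{\lambda'}(\xi^i)-J^{i,\phi}_{\lambda'}(\xi^i))$, and $\mathrm{(III)}:=\frac{1}{N}\sum_i(J^{i,\phi}_{\lambda'}(\xi^i)-J^\phi(\nu_0))$, where $J^{i,N,\phi}_{\lambda'}$ and $J^{i,\phi}_{\lambda'}$ are the regularized-discount values of Theorem \ref{finiteconvergence}. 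For fixed $\lambda'$, term (II) is bounded by $2C_2(\lambda')M_N\to0$ by \eqref{valueconvergence}. Term (III) is, by the analogue of \eqref{valuerelation} for the discount $\delta_{\lambda'}$, the mean-field discount gap $\sum_k(\delta_{\lambda'}(k)-\delta(k))\mE^{\nu_0,\phi}[r(\mu_k)\phi(\mu_k)]$, which tends to $0$ as $\lambda'\to0$ by the dominated-convergence argument used for $\delta J^\lambda$ in the proof of Theorem \ref{epsiloneq-MF}. The order of limits is therefore: given $\eta=\epsilon/3$, first fix $\lambda'$ small so that $\mathrm{(I)}$ and $\mathrm{(III)}$ are below $\eta/3$, then take $N$ large to push $\mathrm{(II)}$ below $\eta/3$.

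I expect the main obstacle to be term (I), the discount gap of the $N$-agent value, which must be made small \emph{uniformly in} $N$ as $\lambda'\to0$. Writing $J^{i,N,\phi}-J^{i,N,\phi}_{\lambda'}=\sum_k(\delta(k)-\delta_{\lambda'}(k))\mE[f(X^{i,N,\phi}_k)\phi_N(\vec X^{N,\phi}_k)]$ and invoking the $N$-agent analogue of the survival representation in Lemma \ref{formulationclassical} for the centralized stopping rule, this equals $\sum_k(\delta(k)-\delta_{\lambda'}(k))\,w^N_k$ with nonnegative stopping weights $w^N_k$ satisfying $\sum_k w^N_k\le L_7$. Since $\delta(k)-\delta_{\lambda'}(k)\to0$ pointwise but not uniformly in $k$, uniform smallness of (I) is \emph{not} implied by $\sum_k w^N_k\le L_7$ alone; it additionally requires that the stopping-time distributions $\{w^N_k\}_k$ have tails small uniformly in $N$. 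This is the crux: I would establish $\sup_N\sum_{k\ge K}w^N_k\to0$ as $K\to\infty$ by combining the propagation-of-chaos estimate \eqref{Xconvergence} (which yields $w^N_k\to w^\infty_k$ as $N\to\infty$ for each fixed $k$, with $\sum_k w^\infty_k\le1$) with the monotone-in-$K$ decay of the tail $\sum_{k\ge K}w^N_k$ for each fixed $N$, and then upgrading pointwise-in-$N$ tail decay to uniform decay by a Dini-type argument on the one-point compactification $\mN\cup\{\infty\}$ of the index set. Once the tail of (I) is controlled uniformly, its head $\sum_{k<K}(\delta(k)-\delta_{\lambda'}(k))w^N_k$ is small for $\lambda'$ small by finiteness of the sum, which completes the estimate for (I) and hence the proof.
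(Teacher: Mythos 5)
Your overall architecture coincides with the paper's proof: freeze $\lambda$ small so that the mean-field near-equilibrium property of Theorem \ref{epsiloneq-MF} holds, transfer it to ({\bf N-MDP}) through ({\bf Limit-MDP}) using the identification \eqref{valuerelation} (extended to $\psi\oplus_1\phi_\lambda$) and the convergence of Theorem \ref{finiteconvergence}, and treat the discount mismatch between $\delta$ and the regularized discount as separate error terms. Your terms (I) and (III) correspond to the paper's \eqref{approxJa}--\eqref{approxJphi}, and your term (II) to \eqref{approxJaNepsilon}--\eqref{approxJphiNepsilon}; your handling of (II) and (III) is correct, and the decoupling of the auxiliary parameter $\lambda'$ from the frozen $\lambda$ is legitimate (indeed cleaner bookkeeping than the paper's single-parameter version, where the policy and the discount share the same $\lambda$).

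The genuine gap is in your resolution of term (I). The Dini-type argument on $\mN\cup\{\infty\}$ is circular: Dini's theorem requires each function $g_K(N):=\sum_{k\geq K}w^N_k$ to be \emph{continuous} on the compactified index set, i.e. $\sum_{k\geq K}w^N_k\to\sum_{k\geq K}w^\infty_k$ as $N\to\infty$. This is an interchange of a limit with an infinite sum, and the ingredients you have — pointwise convergence $w^N_k\to w^\infty_k$ for each fixed $k$ (from \eqref{Xconvergence}) and $\sum_k w^\infty_k\leq 1$ — do not imply it, because mass can escape to $k=\infty$: for $w^N_k=\ind_{\{k=N\}}$ one has $w^N_k\to 0$ pointwise while $g_K(N)=1$ for every $N\geq K$. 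Verifying continuity of $g_K$ at $N=\infty$ is exactly the uniform tightness of the stopping-time distributions that you set out to prove, so the Dini step assumes its own conclusion.

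The gap can be closed with an ingredient already in the paper, and this is what makes the whole scheme work. Since $\phi_\lambda=\Ti^\lambda_1(v_\lambda)$ is a Gibbs policy with $\|v_\lambda\|_\infty\leq R$ (Lemma \ref{Tibound}), estimate \eqref{phiestuniform} of Lemma \ref{phiestuniformlemma} gives the uniform lower bound $\phi_\lambda(\mu)\geq c_\lambda:=\bigl(1+\exp\bigl(\tfrac{1}{\lambda}(R+L_1)\bigr)\bigr)^{-1}>0$ on all of $\barS$. In ({\bf N-MDP}) the centralized stopping rate is $\phi_\lambda$ evaluated at the empirical measure, hence also bounded below by $c_\lambda$, so the system survives $K$ steps with probability at most $(1-c_\lambda)^{K-1}$ (for $\psi\oplus_1\phi_\lambda$ as well, since only the first step may have rate $0$), \emph{regardless of} $N$, $\psi$ and $\nu_0$. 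Thus $\sup_N\sum_{k\geq K}w^N_k\leq L_7(1-c_\lambda)^{K-1}$, a geometric tail that is uniform in $N$. This is available precisely because $\lambda$ is already frozen at this stage of your decomposition: the tail rate $c_\lambda$ is fixed, while the discount gap $\delta(k)-\delta_{\lambda'}(k)$ involves only $\lambda'$, so splitting (I) into a head (finitely many terms, each tending to $0$ as $\lambda'\to 0$) and a tail (small for $K$ large, uniformly in $N$ and $\lambda'$) closes the estimate. Since all constants in this bound and in Theorem \ref{finiteconvergence} are independent of $\psi$ and $\nu_0$, the final choices of $\lambda'$, $K$ and $N$ are uniform over $\psi\in[0,1]$ and $\nu_0\in\barS$, as Definition \ref{Nepsilondef} requires; you should state this uniformity explicitly rather than fixing $\psi$ and $\nu_0$ at the outset.
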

\begin{proof}
    We first choose $\lambda>0$ sufficiently small such that
   \begin{align}
    & J^{\psi\oplus_1\phi_\lambda}_\lambda(\nu_0)\leq J^{\phi_\lambda}_\lambda(\nu_0)+\epsilon,\forall \psi\in [0,1], \label{epsiloneqMF}  \\
    &  \frac{1}{N}\sum_{i\in [N]} |J^{i,N,\psi\oplus_1\phi_\lambda}(\xi^i)-J^{i,N,\psi\oplus_1\phi_\lambda}_\lambda(\xi^i)| <\epsilon, \label{approxJa} \\
\text{and}\  \ \ & \frac{1}{N}\sum_{i\in [N]} |J^{i,N,\phi_\lambda}(\xi^i)-J^{i,N,\phi_\lambda}_\lambda(\xi^i)| <\epsilon. \label{approxJphi}
   \end{align}
Thanks to Assumptions \ref{as1} and \ref{Tr-f-Lip}, the choice of $\lambda$ only depends on $L_0$ and $L_7$. In particular, it is independent of $N$, $\psi$, and $\nu_0$. On the other hand, we write
{\small
\begin{align*}
J^{i,N,\psi\oplus \phi_\lambda}(\xi^i)=\mE f\left(\xi^i,\frac{1}{N}\sum_{j\in [N]} \delta_{\xi^j}\right)\psi+\sum_{k=1}^\infty \delta(k)\mE \left[f\left(X^{i,N,\phi}_k,\frac{1}{N}\sum_{j\in [N]}\delta_{X^{i,N,\phi_\lambda}_k}\right)\phi_\lambda\left( \frac{1}{N}\sum_{j\in [N]}\delta_{X^{i,N,\phi_\lambda}_k}  \right)\right].
\end{align*}}After some minor modifications in the proof of Theorem \ref{finiteconvergence}, we can obtain that there exists a constant $C_3(\lambda)>0$ (independent of $N$, $a$ and $\nu_0$) such that 
\begin{equation}\label{approxJaN}
    \frac{1}{N}\sum_{i\in [N]} | J^{i,N,\psi\oplus \phi_\lambda}_\lambda(\xi^i)-J^{i,\psi\oplus \phi_\lambda}_\lambda(\xi^i) |\leq C_3 M_N.
\end{equation}
Thanks to Lemma \ref{MNrate}, we can choose $N$ large enough such that $C_2(\lambda)M_N<\epsilon$ and $C_3(\lambda)M_N<\epsilon$. We thus deduce from \eqref{approxJaN} and \eqref{valueconvergence} that 
\begin{align}
    &  \frac{1}{N}\sum_{i\in [N]} | J^{i,N,\psi\oplus_1 \phi_\lambda}_\lambda(\xi^i)-J^{i,\psi\oplus_1 \phi_\lambda}_\lambda(\xi^i) | <\epsilon,  \label{approxJaNepsilon}  \\
 \text{and}\ \  &   \frac{1}{N}\sum_{i\in [N]} | J^{i,N,\phi_\lambda}_\lambda(\xi^i)-J^{i,\phi_\lambda}_\lambda(\xi^i) | <\epsilon. \label{approxJphiNepsilon}
\end{align}
Plugging \eqref{approxJa}, \eqref{approxJphi}, \eqref{approxJaNepsilon} and \eqref{approxJphiNepsilon} into \eqref{epsiloneqMF}, and using the relationship in \eqref{valuerelation}, we readily arrive at
\[
\frac{1}{N}\sum_{i\in [N]} J^{i,N,\psi\oplus_1\phi_\lambda}(\xi^i)\leq \frac{1}{N}\sum_{i\in [N]} J^{i,N,\phi_\lambda}(\xi^i)+5\epsilon,
\]
which completes the proof.
\end{proof}
\begin{remark}
    In some special cases, the order of $\lambda$ and $N$ to achieve an $\epsilon$-equilibrium can be obtained explicitly. For example, if $L_0':= \sum_{k=0}^\infty \delta(k)k^2<\infty$, we have
    \begin{align*}
    \sum_{k=0}^\infty \delta(k)\left(1-\left( \frac{1}{1+\lambda}   \right)^{k^2}    \right)&\leq \sum_{k=0}^\infty\delta(k)k^2 \frac{\lambda}{1+\lambda}\leq L_0' \lambda. 
    \end{align*}
    Therefore, from the proof of Theorem \ref{epsiloneq-MF}, it follows that \eqref{epsiloneqMF} holds if we choose
    \[
    \lambda<   \frac{\epsilon}{2(L_0'+e^{-1})}.
    \]
    Similarly, \eqref{approxJa} and \eqref{approxJphi} hold with $\lambda<O(\epsilon)$. With the convergence rate of $M_N$ explicitly given in Lemma \ref{MNrate}, the order of $N$ can be determined by $M_N<\frac{\epsilon}{C_2(\lambda)+C_3(\lambda)}$. By the proof of Theorem \ref{finiteconvergence}, both $C_2(\lambda)$ and $C_3(\lambda)$ tend to infinity as $\lambda\to 0$, and we have the order
    \[
    O\left(\sum_{k}\delta(k)\left( \frac{1}{1+\lambda}   \right)^{k^2}\left(1+\frac{1}{\lambda}\exp\left(\frac{1}{\lambda}\right)     \right)^k      \right),
    \]
    where we have used Lemma \ref{phiestuniformlemma} to estimate $\|\phi\|_{\rm Lip}$.
\end{remark}

\section{Examples}\label{sec:exm}
\subsection{An Example with Explicit Solution: R\&D Project Announcement}\label{Illexample}

We now give in this subsection an illustrative example, in which we can prove that the relaxed equilibrium is unique and can be explicitly characterized. If model parameters satisfy some extra conditions, we can further show that the pure strategy equilibrium does not exist. In addition, we will also numerically illustrate the convergence of regularized equilibrium to relaxed equilibrium as the parameter $\lambda\rightarrow 0$.

\begin{example}\label{nopure}
Among the large population, we assume that each representative agent only has two states $S=\{A,B\}$. At time $k\in \mT$, each agent at state A has the probability $p$ to remain at the same state and the probability $1-p$ to move to the other state $B$, while the state $B$ is the absorbing state. Moreover, it is assumed that the transition probability $p$ itself is a uniform random variable that $p\sim U_{[0,1]}$, which can be interpreted as common noise in the mean-field model. The goal of the social planner is to stop the large system in a centralized manner by achieving a higher proportion of the population that attains state B by the time of stopping. However, due to the waiting cost described by the discount factor, it is not optimal to wait forever. Instead, the social planner needs to make a strategic decision to stop under a non-exponential discount described below. In particular, let us denote by $\mu_k$ the proportion of agents in the population that still remain in the state $A$ at the time step $k$. The underlying model can be described by the following two-state Markov chain that


\begin{center}
	\begin{tikzpicture}[->, >=stealth', auto, semithick, node distance=3cm]
	\tikzstyle{every state}=[fill=white,draw=black,thick,text=black,scale=1]
	\node[state]    (A)                     {$A$};
	\node[state]    (B)[right of=A]   {$B$};
	\path
	(A) edge[loop left]			node{$p\sim U_{[0,1]}$}	(A)
	(A) edge[bend right,above]	node{$1-p$}	(B)
	(B) edge[loop right]		node{$1$}	(B);
	\end{tikzpicture}
\end{center}

 In the context of research and development (R$\&$D), our example here can be viewed as a toy mean-field model when the social planner such as the government or the company manager needs to decide the timing to announce the new product or technology on behalf of a large population of agents. Each agent at state A indicates that she is still in the developing mode while at state B means that the task is completed. The random variable $p$ depicts the randomness for the agent to complete the task, i.e., moving from state A to state B. Ideally, the social planner would hope to wait until all agents finish their task and make the announcement of the completion of the whole project. However, in real-life situations, the waiting cost might be very high due to well-observed severe competition in R$\&$D among large teams. Therefore, for the overall welfare of the large population, a strategic stopping policy by the social planner is necessary in the face of competition.  
 
 Let us consider the reward function $r(\mu)=1-\mu$. The transition rule of the mean-field MDP is given by $\mu_{k+1}=\mu_k U_{k+1}$, with $\{U_k\}_{k\geq 1}$ is a family of i.i.d. uniform random variables on $[0,1]$. In other words, we take $T_0(\mu,z)=\mu z$, $\cZ=[0,1]$, $\cL(\cZ)=U_{[0,1]}$. For some $\beta\in (0,1)$, $K\in (1,1/\beta)$, we consider the discount function defined as follows\footnote{\eqref{exmdeltadef} is a generalization of the well-known quasi-hyperbolic discount function; see \cite{Laibson1997}. In the quasi-hyperbolic discount function, it is required that $0<K<1$. In contrast, we only impose a minimal assumption, i.e., $\delta(1)=K\beta<1$, and allow $K>1$, accommodating the increasing impatience. In Remark \ref{rmkassumptionK}, we demonstrate that it is precisely this generalization that leads to the emergence of a (mixed-strategy) relaxed equilibrium. }:
\begin{equation}\label{exmdeltadef}
 \delta(k)=\left\{
    \begin{aligned}
     &1,  &k=0,\\
     &K\beta^k,&k\geq 1.
    \end{aligned}
    \right.
\end{equation}
\end{example}

We first prove that, in this example, relaxed equilibria must satisfy some specific forms.
\begin{definition}\label{thresholdtype}
    In Example \ref{nopure}, for a fixed strategy $\phi_0\in \F_S$, denote $\tilde{V}(\mu):= \mE^0\tilde{J}^{\phi_0}(T_0(\mu,Z^0)$. $\phi_0$ is said to be of "two-threshold" type if there exists $0\leq \mu_*\leq \mu^*\leq 1$ such that $\tilde{V}<r$ (hence $\phi_0=1$) on $[0,\mu_*]$, $\tilde{V}=r$ on $(\mu_*,\mu^*)$ and $\tilde{V}>r$ (hence $\phi_0=0$) on $[\mu_*,1]$. By setting the interval to be empty set if two endpoints coincide, we also embed the pure strategies (including ``always stopping" and ``always continuing" strategies) with one threshold into the type of two-threshold.
\end{definition}

\begin{proposition}\label{noclassicaleq}
In the context of Example \ref{nopure}, any relaxed equilibrium, if exists, must be of the two-threshold type.
\end{proposition}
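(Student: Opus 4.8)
The plan is to reduce everything to the sign behaviour of the single scalar function $u(\mu):=\tilde{V}(\mu)-r(\mu)$ on $[0,1]$, where $\tilde{V}(\mu)=\mE^0\tilde{J}^{\phi^*}(T_0(\mu,Z^0))$ is exactly the quantity $f_{\phi^*}$ from \eqref{mixedequilibriumchar}. Since a relaxed equilibrium obeys \eqref{mixedequilibriumchar}, we have $\phi^*=1$ on $\{u<0\}$ and $\phi^*=0$ on $\{u>0\}$, so proving that $\phi^*$ is of two-threshold type (Definition \ref{thresholdtype}) is equivalent to showing that $\{u<0\}$ is a left interval $[0,\mu_*)$ and $\{u>0\}$ is a right interval $(\mu^*,1]$ with $\mu_*\le\mu^*$. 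I will therefore establish three facts: $u(0)<0$; the set $\{u>0\}$ is upward closed; and no indifference interval $\{u=0\}$ is flanked by the stopping region on both sides.

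To make $u$ tractable I would first exploit the two special features of Example \ref{nopure}. The multiplicative transition $T_0(\mu,z)=\mu z$ with $Z\sim U_{[0,1]}$ gives the averaging identity $\tilde{V}(\mu)=\frac1\mu\int_0^\mu\tilde{J}^{\phi^*}(w)\,dw$, so $\mu\tilde{V}(\mu)=\int_0^\mu\tilde{J}^{\phi^*}$ is Lipschitz with $(\mu\tilde{V})'=\tilde{J}^{\phi^*}$ a.e.; in particular $\tilde{V}$ and $u$ are continuous. The quasi-hyperbolic discount \eqref{exmdeltadef} gives $\tilde{J}^{\phi^*}=K\beta\,W^{\phi^*}$, where the purely $\beta$-discounted value $W^{\phi^*}$ satisfies the one-step recursion $W^{\phi^*}(\mu)=r(\mu)\phi^*(\mu)+\beta(1-\phi^*(\mu))\mE^0 W^{\phi^*}(\mu Z^0)$, obtained from Lemma \ref{formulationclassical} by conditioning on the first transition. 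Substituting the equilibrium values $\phi^*=1$ and $\phi^*=0$ collapses this to the explicit forms $\tilde{J}^{\phi^*}=K\beta\,r$ on $\{u<0\}$ and $\tilde{J}^{\phi^*}=\beta\,\tilde{V}$ on $\{u>0\}$, which combined with $(\mu\tilde{V})'=\tilde{J}^{\phi^*}$ yield a closed first-order ODE on each region. A first easy consequence is that, from $0\le r\le1$, $\beta<1$, and the fact that the total stopping mass $\sum_{k}\mE^{\mu,\phi^*}\phi^*(\mu_k)\le1$, one gets $W^{\phi^*}\le1$ and hence the uniform bound $\tilde{V}\le K\beta<1$; since $r(0)=1$ this gives $u(0)<0$, so $0$ lies in the stopping region.

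Next I would prove the interval structure region by region. On a maximal subinterval $(a,b)$ of $\{u>0\}$ the continuation ODE reads $\mu\tilde{V}'=-(1-\beta)\tilde{V}$, and continuity forces $u(a)=0$ and, if $b<1$, $u(b)=0$, i.e. $\tilde{V}=1-\mu$ at those endpoints. Computing $u'(a^+)$ and $u'(b^-)$ from this ODE and imposing $u'(a^+)\ge0$, $u'(b^-)\le0$ forces $a\ge\frac{1-\beta}{2-\beta}$ and $b\le\frac{1-\beta}{2-\beta}$, which is incompatible with $a<b$ unless $b=1$; hence every maximal continuation interval reaches $1$, so $\{u>0\}=(\mu^*,1]$. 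On the stopping region the exact identity $(\mu u)'=(K\beta-1)+(2-K\beta)\mu$ holds, and the same boundary computation excludes an indifference interval with stopping on both sides: entering from stopping on the left forces its left endpoint to be $\ge\frac{1-K\beta}{2-K\beta}$, while exiting into stopping on the right forces its right endpoint to be $\le\frac{1-K\beta}{2-K\beta}$, impossible for a nondegenerate interval. Together with $u(0)<0$ (so the interval containing $0$ is stopping, not indifference) and $\{u>0\}=(\mu^*,1]$, this pins down the sign pattern of $u$ on $[0,\mu^*]$ as ``negative, then zero,'' delivering $\{u<0\}=[0,\mu_*)$ and the middle band $[\mu_*,\mu^*]$ on which $u=0$. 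Assembling the three pieces gives the two-threshold form.

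The main obstacle is exactly this ordering of the regions. Because $\tilde{V}$ is not monotone --- on the indifference band it equals $1-\mu$ and thus decreases --- one cannot argue that $u=\tilde{V}-r$ is monotone and crosses zero only once, so the naive single-crossing reasoning fails. The real work is to replace monotonicity by the local boundary analysis of the region-wise ODEs, the most delicate point being the exclusion of the ``stopping--indifference--stopping'' pattern. I also expect to spend some care justifying that the ODE identities, derived on the open regions where $\phi^*$ is pinned to $0$ or $1$, remain valid up to the region boundaries (using continuity of $\tilde{V}$ and of the right-hand sides there) so that the one-sided derivative inequalities used at the endpoints are legitimate.
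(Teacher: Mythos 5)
Your preliminary reductions are all correct: the averaging identity $\mu\tilde{V}(\mu)=\int_0^\mu \tilde{J}^{\phi^*}(w)\,\md w$ (equivalently the paper's \eqref{barViiteration}), the continuation ODE $\mu\tilde{V}'=-(1-\beta)\tilde{V}$, the stopping-region identity $(\mu u)'=(K\beta-1)+(2-K\beta)\mu$, the endpoint-derivative inequalities, and the uniform bound $\tilde{V}\leq K\beta<1$ giving $u(0)<0$ all check out. Your argument that every maximal interval of $\{u>0\}$ must reach $1$ is a valid, more local substitute for the paper's Steps 2 and 3, which instead run global contradiction arguments anchored at the identity $\tilde{V}(a)=r(a)$, $a=\frac{1-K\beta}{1-K\beta/2}$, established in its Step 1.

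The genuine gap is in the final assembly. Your exclusion of the stopping--indifference--stopping pattern only covers \emph{nondegenerate} indifference intervals: the two endpoint inequalities force the left endpoint to be $\geq m$ and the right endpoint to be $\leq m$, where $m=\frac{1-K\beta}{2-K\beta}$, so they collapse the middle band to the single point $m$ but do not rule out that point. Consequently your three facts are consistent with the pattern ``$u<0$ on $[0,m)$, $u(m)=0$, $u<0$ on $(m,c)$, $u=0$ on $[c,\mu^*]$, $u>0$ on $(\mu^*,1]$'', which is not of two-threshold type per Definition \ref{thresholdtype}; hence the step ``this pins down the sign pattern as negative, then zero'' does not follow from what you proved. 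To close the gap you must exclude isolated zeros of $u$ in the interior of the stopping region, and this is exactly the content of the paper's Step 1: an isolated zero is a Lebesgue-null set and so does not affect the integral in \eqref{barViiteration}, so in the pattern above $\phi^*=1$ a.e.\ on $[0,c)$ and therefore $\mu u(\mu)=(K\beta-1)\mu+\left(1-\frac{K\beta}{2}\right)\mu^2$ there; this strictly convex quadratic vanishes only at $0$ and at $a=\frac{1-K\beta}{1-K\beta/2}=2m$, so it is strictly negative at $m$, contradicting $u(m)=0$. (Equivalently, $\mu u$ is strictly convex on any stopping interval and so cannot attain an interior maximum value $0$ while remaining $\leq 0$.) Note this extra step also upgrades your one-sided estimate ``left threshold $\geq m$'' to the exact value $2m=a$ found in the paper; with it in place, your local-boundary proof becomes complete and is essentially a reorganized version of the paper's Steps 1--4.
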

\begin{proof}
 Denote $a:= \frac{1-K\beta}{1-K\beta/2}$ and $b:= \frac{1-\beta}{2-\beta}$. Suppose that $\phi_0$ is an equilibrium. Recall that,
 \begin{align*}
 \tilde{V}(\mu)&:= \mE^0\tilde{J}^{\phi_0}(T_0(\mu,Z^0))\\
 &=\sum_{k=1}^\infty\delta(k)\tilde{\mE}^\mu r(\mu_k)\phi_0(\mu_k)\prod_{j=1}^{k-1}(1-\phi_0(\mu_j))\\
 &=: K\bar{V}(\mu),
 \end{align*}
 where
 \begin{equation}\label{barVdef}
 \bar{V}(\mu):= \sum_{k=1}^\infty\beta^k\tilde{\mE}^\mu r(\mu_k)\phi_0(\mu_k)\prod_{j=1}^{k-1}(1-\phi_0(\mu_j)).
 \end{equation}
We note that, for any $\mu\in [0,1]$,
\begin{equation}\label{barViiteration}
\begin{aligned}
\bar{V}(\mu)&=\beta \tilde{\mE}^\mu r(\mu_1)\phi_0(\mu_1)+\sum_{k=2}^\infty\beta^k\tilde{\mE}^\mu\tilde{\mE}^{\mu_1}r(\mu_{k-1})\phi_0(\mu_{k-1})\prod_{j=0}^{k-2}(1-\phi_0(\mu_j))\\
&=\beta \mE^0 r(\mu U)\phi_0(\mu U) + \beta \sum_{k=1}^{\infty} \beta^k \mE^0 \tilde{\mE}^{\mu U}r(\mu_k)\phi_0(\mu_k)\prod_{j=0}^{k-1}(1-\phi_0(\mu_j))\\
&=\beta \mE^0 r(\mu U)\phi_0(\mu U) + \beta \mE^0 (1-\phi_0 (\mu U))\bar{V}(\mu U)\\
&=\frac{\beta}{\mu}\int_0^{\mu} [(1-\phi_0(\nu))\bar{V}(\nu)+\phi_0(\nu)r(\nu)]\md \nu.
\end{aligned}
\end{equation}
We first observe from \eqref{barViiteration} that $\bar{V}\in C(0,1)$, which will be used implicitly. We split the rest of the proof into three steps.

{\bf Step 1:} $\tilde{V}(\mu)=K\beta(1-\frac{\mu}{2})$ for $\mu\in [0,a]$. 

Noting that $\mu_k=0$ for any $k\in \mT$, $\tilde{\mP}^{0}$-a.s., we obtain from definition that $\tilde{V}(0)=\sum_{k=1}^\infty\delta(k)\phi_0(0)(1-\phi_0(0))^{k-1}$. If $\phi_0(0)=1$, $\tilde{V}(0)=\delta(1)=K\beta<1$. If $\phi_0(0)=0$, $\tilde{V}(0)=0$. If $0<\phi_0(0)<1$, $\tilde{V}(0)<\sum_{k=1}^{\infty}\phi_0(0)(1-\phi_0(0))^{k-1}=1$. In all cases, $\tilde{V}(0)<1=r(0)$. Therefore, for a sufficiently small $\mu$, we must have $\tilde{V}<r$. Because $\phi_0$ is an equilibrium, we have $\phi_0=1$ for sufficiently small $\mu$. Denote $\bar{a}:= \sup\{\bar{\mu}\in (0,1):\phi_0=1\mathrm{\ on\ } [0,\bar{\mu}]     \}$. We claim that $\bar{a}\geq a= \frac{1-K\beta}{1-K\beta/2}$. Otherwise, suppose $\bar{a}<a$. Because $\phi_0=1$ on $[0,\bar{a})$, by \eqref{barViiteration} we have for $\mu\in [0,\bar{a}]$, $\bar{V}(\mu)=\beta(1-\frac{\mu}{2})$. Consequently, $\tilde{V}(\bar{a})=K\beta(1-\frac{\bar{a}}{2})<1-\bar{a}=r(\bar{a})$. By continuity, we can choose a small $\eta>0$ such that $\tilde{V}<r$ on $[\bar{a},\bar{a}+\eta]$, which implies that $\phi_0=1$ on $[0,\bar{a}+\eta]$ by the definition of equilibrium. This is a contradiction to the definition of $\bar{a}$ and hence it holds that $\bar{a}\geq a$. As a result, $\bar{V}_0(\mu)=\beta(1-\frac{\mu}{2})$ for $\mu\in [0,a]$, and $\tilde{V}(a)=1-a$. See Figure \ref{proofdemo1} for the graphical demonstration of {\bf Step 1}.

{\bf Step 2:} $\tilde{V}(\mu)\leq r(\mu) $ for $\mu\in [0,b]$. 

If $b\leq a$, the claim follows from {\bf Step 1}. Suppose now $a<b$ and $\tilde{V}(\mu_0)>r(\mu_0)$ for some $\mu_0\in (a,b)$. By the continuity, $\tilde{V}>r$ holds in a small neighborhood of $\mu_0$. Consider $a_*:= \inf\{\bar{\mu}\in [a,\mu_0]: \tilde{V}>r \mathrm{\ on\ }[\bar{\mu},\mu_0]    \}$. Clearly, $\phi_0=0$ on $(a_*,\mu_0]$. Thus, for $\mu\in [a_*,\mu_0]$, we have
\[
\mu\bar{V}(\mu)=\mu_0\bar{V}(\mu_0)-\beta\int_\mu^{\mu_0} \bar{V}(\nu)\md \nu.
\]
 Solving this integral equation on $[a_*,\mu_0]$ gives $\bar{V}(\mu)=\bar{V}(\mu_0)\left(   \frac{\mu}{\mu_0}\right)^{\beta-1}$. Note that $\tilde{V}'(\mu)\leq \tilde{V}'(\mu_0)=\frac{\tilde{V}(\mu_0)(\beta-1)}{\mu_0}$. Because $\tilde{V}(\mu_0)>r(\mu_0)$ and $\mu_0<b$, we have $\tilde{V}'(\mu)<\left(\frac{1}{b}-1\right)(\beta-1)=-1$. Therefore, by the convexity of $\tilde{V}$, $\tilde{V}(a_*)>r(a_*)$. We thus conclude $a_*=a$, or otherwise, by the continuity, we are led to a contradiction to the definition of $a_*$. However, we now have $\tilde{V}(a)>r(a)$, contradicting to the fact that $\tilde{V}(a)=r(a)$. See Figure \ref{proofdemo2} for the graphical demonstration of {\bf Step 2}.

{\bf Step 3:} $\tilde{V}(\mu)\geq r(\mu) $ for $\mu\in (a,1)$. 

Suppose $\tilde{V}(\mu_1)<r(\mu_1)$ for $\mu_1\in (a,1)$. We consider $a_*:= \inf\{\bar{\mu}\in [a,\mu_1]: \tilde{V}>r \mathrm{\ on\ }[\bar{\mu},\mu_1]    \}$ so that $\tilde{V}(\mu)<r(\mu)$ on $(a_{**},\mu_1]$. By the equilibrium condition, $\phi_0=1$ on $(a_{**},\mu_1]$. Thus, for $\mu\in [a_{**},\mu_1]$,
\[
\mu\bar{V}(\mu)=\mu_1\bar{V}(\mu_1)+\beta \int_{\mu_1}^\mu(1-\nu)\md \nu.
\]
Consequently, for $\mu\in (a_*,\mu_1]$
\[
\tilde{V}(\mu)=K\beta+\frac{\mu_1\tilde{V}(\mu_1)}{\mu}-\frac{K\beta \mu}{2}-\mu_1\left( 1-\frac{\mu_1}{2}  \right)\frac{K\beta}{\mu},
\]
and
\begin{align*}
\tilde{V}'(\mu)&=-\frac{\mu_1\tilde{V}(\mu_1)}{\mu^2}-\frac{\beta K}{2}+\frac{K\beta\mu_1\left(1-\frac{\mu_1}{2}\right)}{\mu^2}\\
&> \frac{-\mu_1(1-\mu_1)+K\beta \mu_1 \left(1-\frac{\mu_1}{2}\right)}{\mu^2}-\frac{ K\beta}{2}\\
&=\frac{\left( 1-\frac{K\beta}{2}\right)\mu_1(\mu_1-a)  }{\mu^2}-\frac{K\beta}{2}\\
&>-\frac{K\beta}{2}>-1.
\end{align*}
Similarly to {\bf Step 2}, we can show that $a_{**}=a$ and $\tilde{V}(a_{**})<r(a_{**})$, which is a contradiction.
 
{\bf Step 4:} Conclusion. 

Define $\mu^*=\inf\{\mu \in (a,1): \tilde{V}>r  {\rm\ on\ } [\mu^*,1]\}$, and we suppose $\mu^*=1$ if the right hand side is empty. By {\bf Step 1} and {\bf Step 2}, $\mu^*\geq c:= a\vee b$. If $\mu^*=c$, then the proof is completed because $\tilde{V}<r$ on $[0,a)$, $\tilde{V}=r$ on $(a,c)$ (empty if $b\leq a$), and $\tilde{V}>r$ on $(\mu^*,1]$. If $\mu^*>c$, we claim that $\tilde{V}=r$ on $[c,\mu^*]$. Otherwise, {\bf Step 3} implies that there exists $\mu_2\in (c, \mu^*)$, such that $\tilde{V}(\mu_2)>r(\mu_2)$. Consider $c_*=\inf \{ \mu\in (c,\mu_2): \tilde{V}> r {\rm\ on\ } (\mu,\mu_2)   \}\geq c$. By the continuity of $\tilde{V}$, we have $\tilde{V}(c_*)=r(c_*)$. Using computations in {\bf Step 2} yields that $\tilde{V}'(c_*+)=\frac{(1-c_*)(\beta-1)}{c_*}\leq \left(\frac{1}{b}-1\right)(\beta-1)=-1$, and $\tilde{V}(\mu)=\tilde{V}(\mu_2)\left(\mu/\mu_2  \right)^{\beta-1}$ for $\mu\in (c_*,\mu_2)$. As a result, $\tilde{V}'(\mu_2)>-1$ and by convexity, $\tilde{V}(\mu)=\tilde{V}(\mu_2)\left(\mu/\mu_2  \right)^{\beta-1}$ for $\mu\in (c_*,1]$. This contradicts the definition of $\mu^*$ because $c_*<\mu^*$. See Figure \ref{proofdemo4} for the graphical demonstration of {\bf Step 4}. In conclusion, we have shown that $\tilde{V}=r$ on $[c,\mu^*]$ and by the definition of $\mu^*$, $\tilde{V}>r$ on $(\mu^*,1]$. Therefore, it is verified that the equilibrium $\phi_0$ must satisfy the two-thresholds type. 
\end{proof}

\begin{figure}
     \centering
     \begin{subfigure}[b]{0.3\textwidth}
         \centering
        \resizebox{1\textwidth}{!}{%
\begin{circuitikz}
\tikzstyle{every node}=[font=\normalsize]
\draw [ line width=0.5pt, -Stealth] (7.75,10) -- (15.5,10);
\draw [ line width=0.5pt, -Stealth] (7.75,10) -- (7.75,16.5);
\draw [line width=0.5pt, short] (7.75,15) .. controls (10.5,12.5) and (10.5,12.5) .. (13.25,10);
\draw [line width=0.5pt, dashed] (7.75,13.5) -- (10.75,12.25);
\draw [line width=0.2pt, dashed] (9,13) -- (9,10);
\draw [line width=0.2pt, dashed] (10.75,12.25) -- (10.75,10);
\node [font=\large] at (10.75,9.75) {$a$};
\node [font=\large] at (8.75,9.75) {$\bar{a}$};
\node [font=\large] at (13.25,9.75) {1};
\node [font=\large] at (7.5,15) {1};
\node [font=\normalsize] at (7.25,13.5) {$K\beta$};
\draw [line width=0.2pt, dashed] (9.5,12.75) -- (9.5,10);
\node [font=\large] at (9.75,9.75) {$\bar{a}+\eta$};
\node [font=\normalsize] at (10.75,15.5) {$\tilde{V}(\mu)=K\beta\left(1-\frac{1}{2}\mu\right)$};
\draw [ line width=0.2pt, -Stealth] (10.75,15.25) -- (8.5,13.25);
\node [font=\normalsize] at (7.5,9.75) {0};
\end{circuitikz}
}%
         \caption{The proof of $\tilde{V}(\mu)=K\beta\left(1-\frac{\mu}{2} \right)$ for $\mu\in [0,a]$.}
         \label{proofdemo1}
     \end{subfigure}
     \hfill
\begin{subfigure}[b]{0.3\textwidth}
         \centering
         \resizebox{1\textwidth}{!}{%
\begin{circuitikz}
\tikzstyle{every node}=[font=\large]
\draw [ line width=0.5pt, -Stealth] (7.75,10) -- (15.5,10);
\draw [ line width=0.5pt, -Stealth] (7.75,10) -- (7.75,16.5);
\draw [line width=0.5pt, short] (7.75,15) .. controls (10.5,12.5) and (10.5,12.5) .. (13.25,10);
\draw [line width=0.5pt, dashed] (7.75,13.5) -- (10.75,12.25);
\draw [line width=0.2pt, dashed] (9,13) -- (9,10);
\draw [line width=0.2pt, dashed] (10.75,14.5) -- (10.75,10);
\node [font=\large] at (10.75,9.75) {$a$};
\node [font=\large] at (8.75,9.75) {$\bar{a}$};
\node [font=\large] at (13.25,9.75) {1};
\node [font=\large] at (7.5,15) {1};
\node [font=\normalsize] at (7.25,13.5) {$K\beta$};
\draw [line width=0.2pt, dashed] (9.5,12.75) -- (9.5,10);
\node [font=\large] at (9.75,9.75) {$\bar{a}+\eta$};
\node [font=\normalsize] at (7.5,9.75) {0};
\draw [line width=0.2pt, dashed] (11.5,10) -- (11.5,12.25);
\draw [line width=0.2pt, dashed] (12,10) -- (12,11.75);
\node [font=\large] at (12,9.75) {$b$};
\node [font=\large] at (11.5,9.75) {$\mu_0$};
\draw [line width=0.5pt, short] (12,11.75) .. controls (10.75,13.25) and (10.75,13.25) .. (10.75,14.5);
\node [font=\large] at (14,14.25) {$\tilde{V}(\mu)=\tilde{V}(\mu_0)\left( \frac{\mu}{\mu_0}\right)^{\beta-1}$};
\draw [line width=0.2pt, dashed] (11,10) -- (11,13);
\node [font=\large] at (11,9.55) {$a_*$};
\end{circuitikz}
}
         \caption{The proof of $\tilde{V}\leq r$ on $(a,b)$.}
         \label{proofdemo2}
     \end{subfigure}
\hfill
  \begin{subfigure}[b]{0.3\textwidth}
         \centering
         \resizebox{1\textwidth}{!}{%
\begin{tikzpicture}[x=0.75pt,y=0.75pt,yscale=-1,xscale=1]

\draw    (112,253) -- (111.01,39) ;
\draw [shift={(111,36)}, rotate = 89.74] [fill={rgb, 255:red, 0; green, 0; blue, 0 }  ][line width=0.08]  [draw opacity=0] (10.72,-5.15) -- (0,0) -- (10.72,5.15) -- (7.12,0) -- cycle    ;
\draw    (112,253) -- (400,253.99) ;
\draw [shift={(403,254)}, rotate = 180.2] [fill={rgb, 255:red, 0; green, 0; blue, 0 }  ][line width=0.08]  [draw opacity=0] (10.72,-5.15) -- (0,0) -- (10.72,5.15) -- (7.12,0) -- cycle    ;
\draw    (111,92) -- (343,254) ;
\draw  [dash pattern={on 0.84pt off 2.51pt}]  (164,129) -- (165,253) ;
\draw  [dash pattern={on 0.84pt off 2.51pt}]  (215,164) -- (215,257) ;
\draw  [dash pattern={on 0.84pt off 2.51pt}]  (312,232) -- (312,253) ;
\draw  [dash pattern={on 0.84pt off 2.51pt}]  (276,207) -- (276,256) ;
\draw  [dash pattern={on 0.84pt off 2.51pt}]  (243,185) -- (243,254) ;
\draw  [dash pattern={on 4.5pt off 4.5pt}]  (243,185) .. controls (320,225) and (322,227) .. (343,226) ;
\draw  [dash pattern={on 0.84pt off 2.51pt}]  (343,226) -- (343,254) ;
\draw    (356,153) -- (303.47,201.64) ;
\draw [shift={(302,203)}, rotate = 317.2] [color={rgb, 255:red, 0; green, 0; blue, 0 }  ][line width=0.75]    (10.93,-3.29) .. controls (6.95,-1.4) and (3.31,-0.3) .. (0,0) .. controls (3.31,0.3) and (6.95,1.4) .. (10.93,3.29)   ;

\draw (100,252) node [anchor=north west][inner sep=0.75pt]   [align=left] {0};
\draw (159,251) node [anchor=north west][inner sep=0.75pt]   [align=left] {$\displaystyle a$};
\draw (212,251) node [anchor=north west][inner sep=0.75pt]   [align=left] {$\displaystyle c$};
\draw (234,253) node [anchor=north west][inner sep=0.75pt]   [align=left] {$\displaystyle c_{*}$};
\draw (266,254) node [anchor=north west][inner sep=0.75pt]   [align=left] {$\displaystyle \mu _{2}$};
\draw (122,127) node [anchor=north west][inner sep=0.75pt]   [align=left] {\textsuperscript{}};
\draw (300,255) node [anchor=north west][inner sep=0.75pt]   [align=left] {$\displaystyle \mu ^{*}$};
\draw (339,256) node [anchor=north west][inner sep=0.75pt]   [align=left] {1};
\draw (299,111.4) node [anchor=north west][inner sep=0.75pt]    {$\tilde{V}( \mu ) =\tilde{V}( \mu _{2})\left(\frac{\mu }{\mu _{2}}\right)^{\beta -1}$};
\end{tikzpicture}
}
         \caption{The proof of {\bf Step 4}.}
         \label{proofdemo4}
     \end{subfigure}
     \caption{}
\end{figure}
We next provide a characterization of the unique (mixed strategy) relaxed equilibrium. 

\begin{proposition}\label{mixequi}
    Suppose $K\in \left(\frac{2}{(3-\beta)\beta},\frac{1}{\beta}\right)$. Denote $a:= \frac{1-K\beta}{1-K\beta/2}$, $b:= \frac{1-\beta}{2-\beta}$. Then, there exists a unique relaxed equilibrium in Example \ref{nopure}, which is given by 
    \begin{equation}\label{relaxedeq}
    \phi_0(\mu)=\left\{
    \begin{aligned}
    &1, &0\leq \mu\leq a,\\
    &\frac{1-\beta-(2-\beta)\mu}{\beta(K-1)(1-\mu)}, &a<\mu\leq b,\\
    &0, & b<\mu\leq 1.
    \end{aligned}
    \right.
    \end{equation}
\end{proposition}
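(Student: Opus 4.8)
The plan is to combine the structural result of Proposition \ref{noclassicaleq} with an explicit analysis of the functional equation \eqref{barViiteration}. Since any relaxed equilibrium must be of the two-threshold type (Definition \ref{thresholdtype}), I would write it as $\phi_0=1$ on $[0,\mu_*]$, $\tilde{V}=r$ on $(\mu_*,\mu^*)$, and $\phi_0=0$ on $[\mu^*,1]$, and first observe that the hypothesis $K>\frac{2}{(3-\beta)\beta}$ is exactly equivalent to $a<b$ (a one-line cross-multiplication reducing to $2<K\beta(3-\beta)$), so the indifference interval is genuinely nonempty. Everything then reduces to pinning down $\mu_*$, $\mu^*$, and the profile of $\phi_0$ on $(\mu_*,\mu^*)$, each of which is forced by \eqref{barViiteration}; this simultaneously delivers uniqueness. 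Throughout I would use that $\mu\mapsto\mu\bar{V}(\mu)=\beta\int_0^\mu[(1-\phi_0)\bar{V}+\phi_0 r]\,\md\nu$ is absolutely continuous, hence $\bar{V}$ is $C^1$ wherever $\phi_0$ is continuous.

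First I would determine $\mu_*=a$. On $[0,\mu_*]$ the profile $\phi_0\equiv 1$ turns \eqref{barViiteration} into $\mu\bar{V}(\mu)=\beta\int_0^\mu(1-\nu)\,\md\nu$, hence $\tilde{V}(\mu)=K\beta(1-\tfrac{\mu}{2})$; continuity of $\tilde{V}$ forces $\tilde{V}(\mu_*)=r(\mu_*)$, i.e. $K\beta(1-\tfrac{\mu_*}{2})=1-\mu_*$, whose unique root is $a$ (with $K<1/\beta$ giving $a\in(0,1)$). Next, on the indifference interval I would substitute $\bar{V}=(1-\mu)/K$ and $\bar{V}'=-1/K$ into the differentiated form $\bar{V}+\mu\bar{V}'=\beta[(1-\phi_0)\bar{V}+\phi_0 r]$ and solve the resulting linear relation for $\phi_0$, obtaining exactly the middle branch of \eqref{relaxedeq}. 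A direct derivative computation shows this expression is strictly decreasing, vanishes at $\mu=b$, and is strictly less than $1$ at $\mu=a^+$ (the defining relation for $a$ makes $\phi_0(a)-1$ a negative multiple of $a$); hence $\phi_0\in(0,1)$ on $(a,b)$ and the equilibrium is genuinely randomized.

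The key step is to show $\mu^*=b$. On one hand $\phi_0\ge 0$ forces the numerator $(1-\beta)-(2-\beta)\mu$ to be nonnegative on the indifference interval, giving $\mu^*\le b$. For the reverse inequality I would examine the continuation region: with $\phi_0=0$ on $(\mu^*,1]$, \eqref{barViiteration} yields $\mu\bar{V}'=(\beta-1)\bar{V}$, so $\tilde{V}(\mu)=(1-\mu^*)(\mu/\mu^*)^{\beta-1}$, a strictly convex function meeting $r$ at $\mu^*$ with right-derivative $-(1-\mu^*)(1-\beta)/\mu^*$. The two-threshold requirement $\tilde{V}>r$ on $(\mu^*,1]$ demands this slope be $\ge r'=-1$, i.e. $\mu^*\ge b$; precisely at $\mu^*=b$ one checks (using $b=(1-\beta)/(2-\beta)$, whence $(1-b)/b=1/(1-\beta)$) that both slopes equal $-1$ while strict convexity keeps $\tilde{V}>r$ for $\mu>b$. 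This slope/convexity matching is the crux and is exactly where the generalized discount \eqref{exmdeltadef} without decreasing impatience enters; with $\mu^*\le b$ it gives $\mu^*=b$.

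These three steps determine $\phi_0$ uniquely, establishing uniqueness. For existence I would run \eqref{barViiteration} forward through the three regions with the candidate \eqref{relaxedeq} and verify the comparison it was built to satisfy: $\tilde{V}<r$ on $[0,a)$ (a linear gap vanishing only at $a$), $\tilde{V}=r$ on $(a,b)$ (by construction of $\phi_0$), and $\tilde{V}>r$ on $(b,1]$ (by the convexity argument above). This is precisely the fixed-point characterization \eqref{mixedequilibriumchar}, so the candidate is a relaxed equilibrium. I expect the main obstacle to be the threshold identification $\mu^*=b$ together with the global sign verification of $\tilde{V}-r$, since these require exact slope and convexity bookkeeping rather than the comparatively routine integrations of \eqref{barViiteration}.
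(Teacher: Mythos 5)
Your proposal is correct and follows essentially the same route as the paper's proof: reduction to the two-threshold type via Proposition \ref{noclassicaleq}, identification of $\mu_*=a$, differentiation of \eqref{barViiteration} on the indifference region to obtain the middle branch, and pinning $\mu^*=b$ by combining the constraint $\phi_0\geq 0$ (giving $\mu^*\leq b$) with the slope/convexity condition $\tilde{V}'(\mu^*+)\geq -1$ (giving $\mu^*\geq b$). The only cosmetic difference is that you re-derive $\mu_*=a$ by continuity matching and spell out the final existence verification separately, whereas the paper cites its earlier proof and folds existence into the ``sufficient and necessary'' phrasing of each step.
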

\begin{proof}
    By direct computations, $K>\frac{2}{(3-\beta)\beta}$ implies $a<b$. We now show that any equilibrium of the two-thresholds type can be uniquely constructed. Denote by $\mu_*$ and $\mu^*$ two thresholds (see Definition \ref{thresholdtype}). From the proof of Proposition \ref{noclassicaleq}, we know $\mu_*=a$. Because $\tilde{V}(\mu)=r(\mu)=1-\mu$ for $\mu\in (a,\mu^*)$, we derive from \eqref{barViiteration} that, for $\mu\in (a,\mu^*)$ 
    \[
    \frac{\mu(1-\mu)}{K}=\beta\left(a-\frac{a^2}{2}\right)+\beta\int_a^\mu \left[\frac{1-\phi_0(\nu)}{K}+\phi_0(\nu)\right](1-\nu)\md \nu,
    \]
    which is equivalent to (by taking derivative or integrating on both sides) 
    \[
    \phi_0(\mu)=\frac{1-\beta-(2-\beta)\mu}{\beta(K-1)(1-\mu)}.
    \]
    It is not hard to show that $\phi_0$ is decreasing on $(a,\mu^*)$ and $\phi_0(a+)=\frac{(1-\beta)K\beta-2+2K\beta}{K\beta^2(K-1)}<1$ in view that $K\beta<1$. Thus, to ensure $0<\phi_0<1$ on $(a,\mu^*)$, it is sufficient and necessary to require $\phi_0(\mu^*-)\geq 0$, which is equivalent to $\mu^*\leq \frac{1-\beta}{2-\beta}=b$. On $(\mu^*,1]$, we use \eqref{barVdef} again to obtain $\tilde{V}(\mu)=(1-\mu^*)\left(\mu/\mu^*  \right)^{\beta-1}$. Due to the strict convexity of $\tilde{V}$, to satisfy $\tilde{V}>r$ on $(\mu^*,1]$, it is sufficient and necessary to require that $\tilde{V}'(\mu^*-)\geq -1$, which gives $\mu^*\geq \frac{1-\beta}{2-\beta}=b$. Hence, it follows that $\mu^*=b$. To conclude, \eqref{relaxedeq} is the unique equilibrium of the two-thresholds type. Moreover, using Lemma \ref{noclassicaleq}, it is the unique equilibrium even without confining the two-thresholds type.
\end{proof}

\begin{remark}\label{rmkassumptionK}
    From Propositions \ref{noclassicaleq} and \ref{mixequi}, when $0<K\leq\frac{2}{(3-\beta)\beta}$, there exists a unique pure strategy equilibrium (i.e., the equilibrium such that $\phi_0(\mu)\in \{0,1\}$ for any $\mu\in [0,1]$) with stopping region $[0,a]$. In particular, if we are considering traditional quasi-hyperbolic discount ($0<K<1$), we obtain a unique pure strategy equilibrium. We also remark that, because $\beta$ is very close to 1 in reality, the assumption $K>\frac{2}{(3-\beta)\beta}$ is mild. For example, in an empirical study, \cite{AS12} estimates $K$ in the range 1.004$\sim$1.028, and the annual discount rate is approximately 0.3, i.e., $\beta \approx 1/(1.3)^{1/365}\approx 0.9993$. Thus, $\frac{2}{(3-\beta)\beta}\approx 1.0004$ and $K>\frac{2}{(3-\beta)\beta}$ is certainly satisfied.
\end{remark}

\begin{remark}\label{rmk:nonexit-pure}
Because the relaxed equilibrium is unique in Example \ref{nopure}, it does not admit any pure strategy equilibrium if assumptions in Proposition \ref{mixequi} hold. In particular, this illustrative example shows that the mild pure strategy equilibrium given by the iteration approach in \cite{Huang2019} does not exist in this model setting. The (mixed strategy) relaxed equilibrium per Definition \ref{originalequilibrium}, on the other hand, exists in general.
\end{remark}

\begin{figure}[t]
     \centering
     \begin{subfigure}[b]{0.45\textwidth}
         \centering
       \includegraphics[width=1\textwidth]{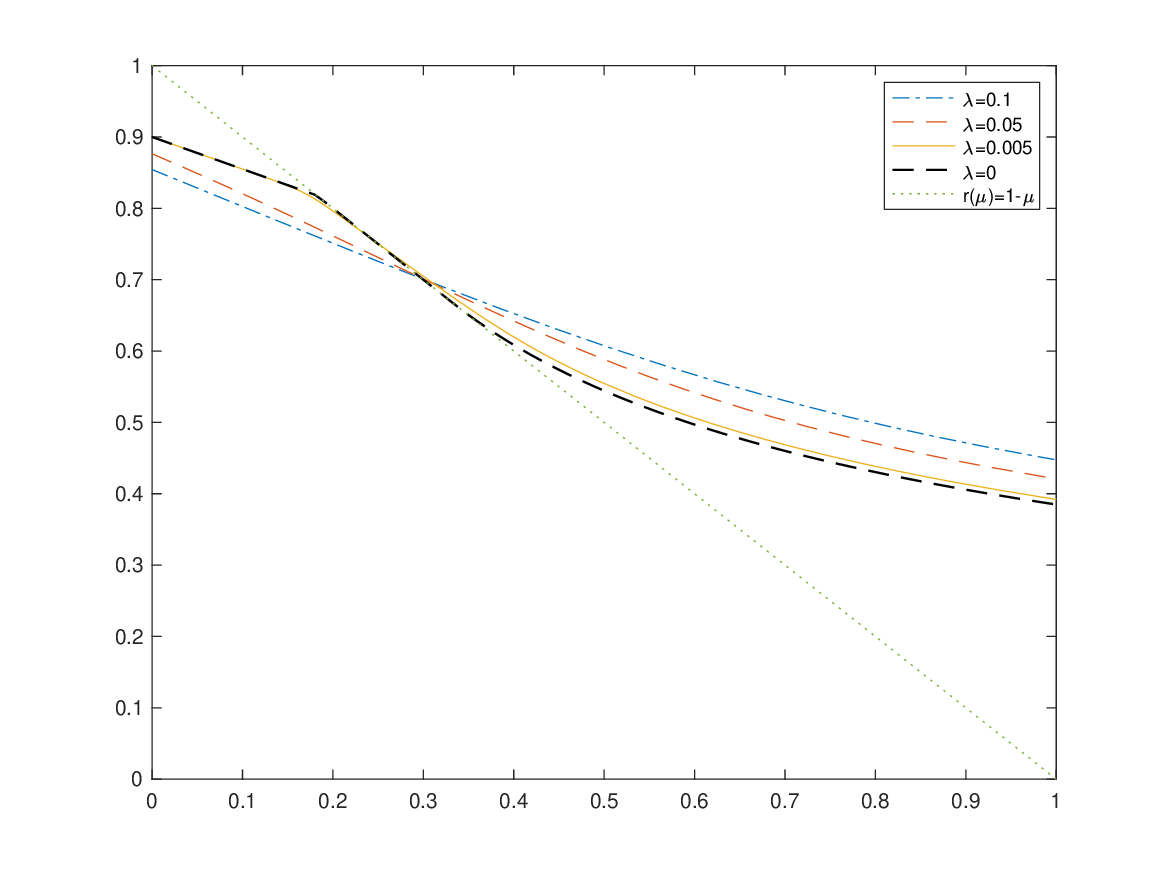}
         \caption{Graphs of functions $\tilde{V}_\lambda$ and $r$.}
         \label{solutiondemovalue}
     \end{subfigure}
     \hfill
     \begin{subfigure}[b]{0.45\textwidth}
         \centering
         \includegraphics[width=1\textwidth]{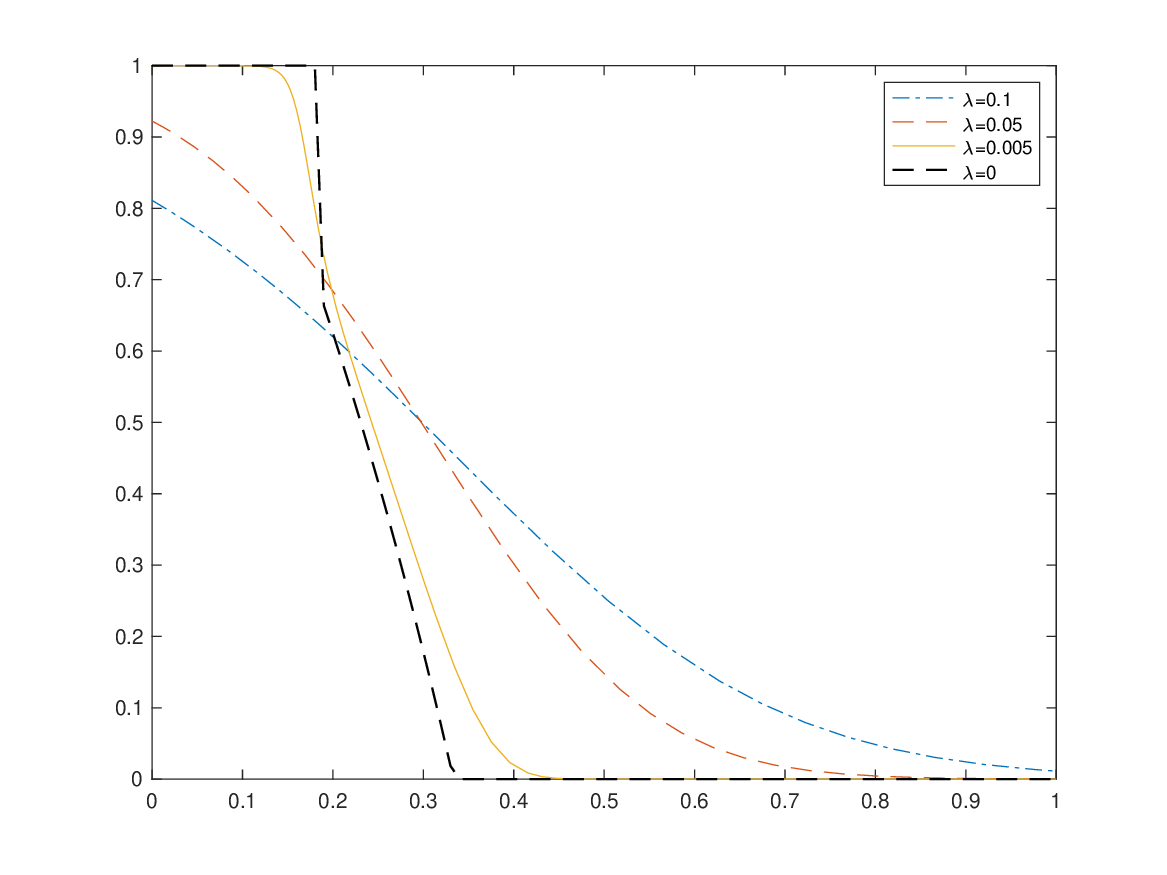}
         \caption{Graphs of equilibrium stopping strategies $\phi_\lambda$.}
         \label{solutiondemostrategy}
     \end{subfigure}
     \caption{}
\end{figure}

We next present some numerical plots to illustrate the regularized equilibrium with the parameter $\lambda>0$ and show the convergence to the unique relaxed equilibrium in \eqref{mixequi} as $\lambda\rightarrow 0$. For $\lambda>0$, let $\phi_\lambda$ denote a regularized equilibrium and $\tilde{V}_\lambda(\mu):= \mE^0\tilde{J}^{\phi_\lambda}_\lambda(T_0(\mu,Z^0))$. Recall the entropy term $\E(\phi)=-\phi \log \phi -(1-\phi)\log (1-\phi)$ for $\phi\in (0,1)$. Here we further introduce the notation $\Phi_\lambda(\mu,v):= \frac{1}{1+\exp\left(\frac{v-1+\mu}{\lambda}   \right)}$. It then follows that $\phi_\lambda(\mu)=\Phi(\mu,\tilde{V}_\lambda(\mu))$, and similarly to \eqref{barVdef} and \eqref{barViiteration}, we have
\begin{align*}
    \bar{V}_\lambda(\mu)&:= \sum_{k=1}^{\infty}\beta^k \tilde{\mE}^\mu\left[r(\mu_k)\phi_\lambda(\mu_k)+\lambda\E(\phi_\lambda(\mu_k))\right]\prod_{j=0}^{k-1}(1-\phi_\lambda(\mu_j))\\
    &=\frac{\beta}{\mu}\int_0^\mu [(1-\phi_\lambda(\nu))\bar{V}_\lambda(\nu)+\phi_\lambda(\nu)r(\nu)+\lambda \E(\phi_\lambda(\nu))]\md \nu\\
    &=\frac{\beta}{\mu}\int_0^\mu\left[(1-\Phi_\lambda(\nu,\tilde{V}_\lambda(\nu)))\bar{V}_\lambda(\nu) +\Phi_\lambda(\nu,\tilde{V}_\lambda(\nu))(1-\nu)+\lambda \E(\Phi_\lambda(\nu,\tilde{V}_\lambda(\nu)))   \right]\md \nu,
\end{align*}
for any $\mu\in (0,1]$, and $\tilde{V}(\mu)=K\bar{V}(\mu)$. Taking $\mu\to 0$ gives
\begin{equation}\label{regularizedinitial}
\tilde{V}_\lambda(0)=\beta\left[ (1-\Phi_\lambda(0,\tilde{V}_\lambda(0)))\tilde{V}_\lambda(0)+K\Phi_\lambda(0,\tilde{V}_\lambda)+K\lambda\E(\Phi_\lambda(0,\tilde{V}_\lambda(0))   \right].
\end{equation}
As a result,  $\phi_\lambda$ is a regularized equilibrium if and only if $\tilde{V}_\lambda$ solves the following ODE:
\begin{equation}\label{regularizedODE}
   \tilde{V}_\lambda'=\frac{1}{\mu}\left[(\beta-1)\tilde{V}_\lambda-\beta \Phi_\lambda(\mu,\tilde{V}_\lambda)\tilde{V}_\lambda+K\beta \Phi_\lambda(\mu,\tilde{V}_\lambda)(1-\mu)+\lambda K\beta\E(\Phi_\lambda(\mu,\tilde{V}_\lambda)) \right],
\end{equation}
with the initial condition given by \eqref{regularizedinitial}.

Note that \eqref{regularizedODE} and \eqref{regularizedinitial} can be solved numerically, and $\phi_\lambda$ is then obtained via $\phi_\lambda(\mu)=\Phi_\lambda(\mu,\tilde{V}_\lambda(\mu))$. In our numerical experiment, we take $\beta=0.5$, $K=1.8$ and we plot the graphs of $\phi_\lambda$ and $\tilde{V}_\lambda$ for $\lambda=0.1,\ 0.05,\ 0.01$, respectively, in Figures \ref{solutiondemovalue} and \ref{solutiondemostrategy}. We also plot the graph of the relaxed equilibrium without entropy regularization obtained in Proposition \ref{mixequi} ($\lambda=0$ in Figures \ref{solutiondemovalue} and \ref{solutiondemostrategy}). We can clearly observe that the regularized equilibrium can be a good approximation of the relaxed equilibrium of the original problem as $\lambda\to 0$.

\subsection{  An Example without Explicit Solution: Exercise of American ETF Put}\label{exm:ETF:option}

{  

During the past decades, the size of exchange-traded funds (ETFs) has grown to USD 7.1 trillion in the U.S. in terms of assets under management (AUM)\footnote{\href{https://www.ishares.com/us/insights/global-etf-facts-q1-2024}{https://www.ishares.com/us/insights/global-etf-facts-q1-2024}.}. Most ETFs in the market belong to the type of passive index tracking funds, which use replications to track the performance of some indices, e.g., S\&P 500 or Dow Jones Industrial Average. It has also been seen surging popularity in trading ETF {\it options}, a type of derivatives that are written directly on ETFs. For example, ETF options accounted for 41.5\% of all traded options on NYSE in June 2022\footnote{\href{https://www.nyse.com/data-insights/etf-options-break-new-records}{https://www.nyse.com/data-insights/etf-options-break-new-records}.}. 

{  Mean-field model with common noise is suitable for the study of ETF options, which provides a tractable and satisfactory approximation in analyzing the hedging of {\it systematic risks} with large but finite number of underlying stocks. Just as individual randomnesses can be averaged out in the mean-field model, the idiosyncratic risks are assumed to be eliminated through diversification in the context of finance. In particular, exercising an American ETF put aligns with our formulation of the centralized mean-field optimal stopping because the option holder cannot modify the index construction or trade individual stocks. Additionally, this leads to the feature that all ``agents" (stocks in this context) will be terminated simultaneously, and partial stopping is not permitted.

}

	{  To illustrate the application of our results to ETF options}, we start from the price dynamics of each component stock of an index. By using the previous relationship between ({\bf N-MDP}) and ({\bf MF-MDP}), we derive dynamics of ETF prices in the limiting model when the number of stocks goes to infinity, leading to reduced computation complexity. Further, using our regularized approach, we devise a model-free algorithm to obtain one approximated equilibrium stopping policy under a quasi-hyperbolic discount. 
	
	For an index with $N$ constituents, suppose that the price process of each individual stock, $i\in [N]$, is described by
	\begin{equation}\label{exm:stockprice}
	P^i_{t+1}=P^i_t(1+r^i_{t+1}),
	\end{equation}
	where $\{r^i_{t}\}_{t\in \mT}$ are dynamic returns of stock $i$. For simplicity, it is assumed that
	\begin{equation}\label{exm:return}
	r^i_{t+1} = \theta^i_t +\sigma^i_t \epsilon^i_{t+1}+\sigma^{0i}_t\epsilon^0_{t+1}.
	\end{equation}
	Here $\{\epsilon^i_t\}_{i\in [N],t\in \mT}$ and $\{\epsilon^0_t\}_{t\in \mT}$ are normalized idiosyncratic and common noises, respectively, both with mean 0 and variance 1. To model the mean-field interaction, we assume that
	\begin{align}\label{exm:return:model}
	&\theta^i_t = f^i(t,\bar r^N_t,\bar v^N_t),\quad \sigma^{0i}_t = g^i(t, \bar r^N_t,\bar v^N_t),\quad \sigma^i_t = h^i(t, \bar r^N_t,\bar v^N_t)\\
	 &\bar r^N_t :=\frac{1}{N}\sum_i r^i_t,\quad \bar v^N_t:=\frac{1}{N}\sum_i |r^i_t|^2.
	\end{align}
 Here, $f^i$, $g^i$, and $h^i$ are sampled from some ``type distribution", independently from all other randomness in the model. We denote by $\bar f$, $\bar g$, $\bar h$ their expectations, and $\overline{f^2}$, $\overline{g^2}$, $\overline{h^2}$ their second moments. Note that the type distributions are used to model heterogeneity among different stocks or sectors. We assume for simplicity that $f^i$, $g^i$, and $h^i$ are also independent, hence when considering ETFs or indices, the heterogeneity is aggregated and only their moments matter. This example can also be generalized to the case where $f^i$, $g^i$, and $h^i$ are correlated.
 
  We use $\bar r^N_t$, the average return rates of all stocks, and $\bar v^N_t$, the average of squared return rates, to model the overall performance of the market. Consequently, \eqref{exm:return:model} implies that individual stock returns exhibit distinct {\it alphas} and {\it betas} across various market conditions. {  Alphas and betas are fundamental concepts in finance, originating from CAPM (\cite{CAPM64}), and are regarded as pivotal factors in asset pricing, portfolio selection, and risk management. Furthermore, there are renowned variations of CAPM that permit different alphas and betas in different time periods and market states, such as conditional CAPM (\cite{JW96}). This motivates us to incorporate mean-field interaction in \eqref{exm:return:model} to better capture the mutual influence.   
  
 }
 
With \eqref{exm:stockprice} and \eqref{exm:return} in mind, we use the average price $\bar P^N_t:=\frac{1}{N}\sum_i P^i_t$ to represent the ETF price or the index level, and aim to study the optimal exercising time of an American ETF put with strike $P_0>0$. That is, we want to solve
\begin{equation}\label{exm:finiteproblem}
	v(\boldsymbol{p},\boldsymbol{r})=\sup_\tau \mE^{\boldsymbol{p},\boldsymbol{r}} \delta(\tau)\Big(P_0 - \bar P^N_\tau\Big)_+,
\end{equation}
with observations of prices $\boldsymbol{p}=(p_1,\cdots,p_N)$ and returns $\boldsymbol{r}=(r_1,\cdots,r_N)$.

We now formally derive the mean-field limit. We emphasize that due to the dependence of $\bar v^N_t$ in \eqref{exm:return:model}, we need the distribution of squared return in addition to the average return, which is not available from the market (macro) data (e.g., index data). Therefore, using the connection between ({\bf N-MDP}) and ({\bf MF-MDP})  in Section \ref{finiteconvergencesection}, we can focus on some reduced {\bf single-agent} problems with less computation complexity.

 Denote 
\begin{equation}
	\bar P_t:= \lim_{N\to \infty}\bar P^N_t,\quad \bar r_t:=\lim_{N\to \infty} \bar r^N_t,\quad \bar v_t =\lim_{N\to \infty} \bar v^N_t.
\end{equation}
 We obtain from \eqref{exm:stockprice} that
\begin{equation}\label{exm:return:square}
	|r^i_{t+1}|^2=|\theta^i_t|^2+|\sigma^i_t|^2|\epsilon^i_{t+1}|^2+|\sigma^{0i}_t|^2|\epsilon^0_{t+1}|^2+2\theta^i_t\sigma^i_t\epsilon^i_{t+1}+2\theta^i_t\sigma^{0i}_t\epsilon^0_{t+1}+2\sigma^i_t\sigma^{0i}_t\epsilon^i_{t+1}\epsilon^0_{t+1}
\end{equation}

Taking average on both sides of \eqref{exm:stockprice}, and using the law of large numbers, we have
\begin{align}
	\frac{1}{N}\sum_i P^i_{t+1}=&\frac{1}{N}\sum_i P^i_t+\frac{1}{N}P^i_t \Big(\theta^i_t +\sigma^i_t \epsilon^i_{t+1}+\sigma^{0i}_t\epsilon^0_{t+1}\Big)\\
	\to &\bar P_t \Big(1+\bar f(t,\bar r_t,\bar v_t)+\bar g(t,\bar r_t,\bar v_t)\epsilon^0_{t+1} \Big)\\
	=&\bar P_t (1+\bar r_{t+1}).
\end{align}		
Here, we take the average on both sides of \eqref{exm:return} and \eqref{exm:return:square}, and then take the limit as $N\to \infty$ to obtain 
\begin{align}
	&\bar r_{t+1} =\bar f(t,\bar r_t,\bar v_t)+\bar g(t,\bar r_t,\bar v_t)\epsilon^0_{t+1},\quad \text{and} \label{exm:marketreturn}\\
	&\bar v_{t+1} = \overline{f^2}(t,\bar r_t,\bar v_t)+\overline{h^2}(t,\bar r_t,\bar v_t)+\overline{ g^2}(t,\bar r_t,\bar v_t)|\epsilon^0_{t+1}|^2+2\bar f(t,\bar r_t,\bar v_t)\bar g (t,\bar r_t,\bar v_t)\epsilon^0_{t+1}.\label{exm:average_sq_return}
\end{align}
Therefore, using the approximation results and the relationship between ({\bf N-MDP}) and ({\bf MF-MDP}) in Section \ref{finiteconvergencesection}, we can focus on the (approximated) market return \eqref{exm:marketreturn} and ETF price $\{\bar P_t\}_{t=0}^\infty$  satisfying
\begin{equation}\label{exm:index}
	\bar P_{t+1}=\bar P_t(1+\bar r_{t+1}).
\end{equation}

We next work on the numerical implementation to find the regularized equilibrium. Let us take the state space $\bar S=[0,K]\times [-1,1]$ in the mean-field model, and consider the state dynamics satisfying \eqref{exm:marketreturn}, \eqref{exm:average_sq_return} and \eqref{exm:index}, and the reward function $R(p,r) = (P_0-p)_+$\footnote{In this example, $r$ has been used to denote ``return", hence we use $R$ for reward function, as opposed to $r$ in the main context.}. For simplicity, we choose the same discount function as in Subsection \ref{Illexample} that
\begin{equation}\label{exm:discount}
	\delta(k)=\left\{
    \begin{aligned}
     &1,  &k=0,\\
     &K\beta^k,&k\geq 1.
    \end{aligned}
    \right.
\end{equation}

To ease the presentation, in what follows, we assume that 
\begin{align}\label{exm:return:linear}
	f^i(t,r,v) = a^i+b^i r,\quad g^i(t,r,v) = c^i+d^i r.
\end{align}
Here, for $u\in \{a,b,c,d\}$, $u^j$ are sampled from some distribution with mean $\bar u$. {  Because $f$ and $g$ do not depend on $v$ in \eqref{exm:return:linear}, the form of $h$, only appearing in the dynamics of $v$ \eqref{exm:average_sq_return}, becomes irrelavent.} Moreover, we simply write $P_k$, $r_k$ instead of $\bar P_k$, $\bar r_k$. 

\begin{algorithm}
\caption{Policy Iteration Algorithm for ETF put}

\textbf{Inputs}: a neural network $\{\hat F^\theta:\bar S\to \mR\}_{\theta\in \Theta}$, parameters for discount function $K$ and $\beta$ in \eqref{exm:discount}, regularization parameter $\lambda$, number of iterations $L$.

\textbf{Initialization}: $\theta_0$, $\phi^0: = \frac{1}{1+\exp\Big(\frac{1}{\lambda}(K\hat F^{\theta_0}-R  ) \Big)}$.

\begin{algorithmic}[1]
\FOR{Interation $l=0$ \TO $L-1$}

\STATE{
Use Algorithm \ref{exm:algo:neuralTD} to update $\theta_{l+1}$ such that:  
\begin{align}\label{exm:eval}
\hat F^{\theta_{l+1}}(p,r) \approx  \sum_{k=1}^\infty \beta^k \mE^{p,r,0\oplus_1 \phi^l}\Big[(P_0- P_k)_+\phi^l(P_k,r_k)+\lambda \Epsilon(\phi^l( P_k, r_k))\Big].
\end{align}
}

\STATE{
Update $\phi^{l+1} \leftarrow  \frac{1}{1+\exp\Big(\frac{1}{\lambda}(K\hat F^{\theta_{l+1}}-R )  \Big)}$.
}

\ENDFOR
\end{algorithmic}

\textbf{Output}: A policy $\phi^{L}$.

\label{exm:algo:policyiteration}
\end{algorithm}

\begin{algorithm}
\caption{Neural TD Algorithm for Policy Evaluation}

\textbf{Inputs}: simulator $\M$ of the model \eqref{exm:marketreturn}, \eqref{exm:average_sq_return} and \eqref{exm:index}, a neural network $\{\hat F^\theta:\bar S\to \mR\}_{\theta\in \Theta}$ for evaluation, a neural network $\{\hat G^\psi:\bar S\to \mR\}_{\psi \in \Psi}$ for approximating the conditional expectation, parameters for discount function $K$ and $\beta$ in \eqref{exm:discount}, regularization parameter $\lambda$, batch size $M$, length of maximal evaluated horizon $T_m$, the learning rate $\eta$, a fixed policy $\phi$.

\textbf{Initialization}: $\theta_{0}$, $P_{0,0}$, $r_{0,0}$, 

\begin{algorithmic}[1]

\STATE{
Sample a batch of paths of ETF prices and returns $\{(P_{k,j},r_{k,j})\}_{0\leq k\leq T_m-1,0\leq j\leq M-1}$.
}

\STATE{
Compute the rewards (with entropy term) and the stopping decision along the sampled path:
\begin{align}
&\phi_{k,j} = \phi(P_{k,j},r_{k,j}),\\
&Rewards_{k,j}= (P_0-P_{k,j})_+\phi_{k,j} + \lambda \E(\phi_{k,j})
\end{align}
}

\FOR{Timestep $k=0$ \TO $T_m-2$}

\STATE{
Compute TD loss: 
\begin{align}
TDloss = \frac{1}{M}\sum_{j=1}^M |\hat F^{\theta}(P_{k,j},r_{k,j})-\beta \hat G^\psi(P_{k,j},r_{k,j})|^2.
\end{align}
}

\STATE{
Compute CE loss:
\begin{align}
CEloss = \frac{1}{M}\sum_{j=1}^M |\hat G^{\psi}(P_{k,j},r_{k,j})-(1-\phi_{k+1,j})\hat F^\theta(P_{k+1,j},r_{k+1,j})-Rewards_{k+1,j}|^2.
\end{align}

}

\STATE{
Update $\theta$ and $\psi$ by minimizing $TDloss+CEloss$.
}

%

\ENDFOR

\end{algorithmic}

\textbf{Output}: $\hat F^{\theta_{T_m-1}}$.

\label{exm:algo:neuralTD}
\end{algorithm}

By Theorems \ref{fixedpoint} and \ref{epsilon-eq-finite}, we propose a policy iteration algorithm to compute a regularized equilibrium policy for this mean-field stopping problem, which is also an approximated relaxed equilibrium policy for the original problem \eqref{exm:finiteproblem} with finite stocks; see Algorithm \ref{exm:algo:policyiteration}. In particular, the Neural TD algorithm in the evaluation step (Line 2 of Algorithm \ref{exm:algo:policyiteration}) is detailed in Algorithm \ref{exm:algo:neuralTD}. The convergence of Neural TD is theoretically guaranteed, see \cite{CYLW23}. To approximate the function in \eqref{exm:eval}, we use the Markov property to obtain
\begin{align}\label{exm:eval:relation}
	\hat{F}^{\theta}(P_k,r_k) =\beta\mE\Big[ Rewards_{k+1}+(1-\phi^l(P_{k+1},r_{k+1}))\hat F^{\theta}(P_{k+1},r_{k+1})\Big|P_k,r_k \Big],
\end{align}
where $Rewards_k =(P_0-P_k)_+\phi(P_k,r_k)+\lambda\Epsilon(\phi(P_k,r_k))$, and $\{(P_k,r_k)\}_{k\geq 0}$ is any sample path of ETF prices and returns, either by simulation or from the real data. 

Note that in \eqref{exm:eval:relation} the temporal difference term is not the current reward $Rewards_k$ as in the usual TD learning task. The reason is that in our evaluation step, we need to compute the value function if we decide to continue, and compare it to the instantaneous reward by stopping. For the same reason, the term $(1-\phi^l(P_{k+1},r_{k+1}))$ appears in \eqref{exm:eval:relation}. Inspired by \eqref{exm:eval:relation}, we update the parameters by minimizing the $TDloss$ defined by 
\begin{align}
\mE \Big|	\hat{F}^{\theta}(P_k,r_k) -  \beta\mE\big[ Rewards_{k+1}+(1-\phi^l(P_{k+1},r_{k+1}))\hat F^{\theta}(P_{k+1},r_{k+1})\big|P_k,r_k \big]             \Big|^2.
\end{align}
However, we still need to compute a conditional expectation. Leveraging on the projection property of conditional expectation, we use another neural network $\hat G^\psi$ to approximate the right hand side of \eqref{exm:eval:relation} by minimizing the $CEloss$ defined by 
\begin{align}
	\mE \Big|\hat G^\psi(P_k,r_k) - Rewards_{k+1}-(1-\phi^l(P_{k+1},r_{k+1}))\hat F^{\theta}(P_{k+1},r_{k+1})      \Big|^2.
\end{align} 
See Algorithm \ref{exm:algo:neuralTD} for more details.

  We select model parameters as outlined in Table \ref{exm:modelpara}\footnote{  The values of $\bar c$ and $\bar d$ are set to be relatively small, primarily to match the simulated data with the real-world data (as depicted in Figure \ref{fig:comparison}). Increasing $\bar c$ and $\bar d$ will not compromise the convergence and stability of our algorithms, but it will result in an unreasonable scale for the simulated data.}. Additionally, $\epsilon^0$ is distributed according to a standard student-$t$ distribution with degrees of freedom set to 10. }, and $\epsilon^0$ that has a standard student-$t$ distribution with degrees of freedom 10. In Figure \ref{fig:comparison}, we simulate a path of index value and construct the histogram of its daily returns, and compare them with the real data from April 19, 2023 to April 17, 2024\footnote{Data source: \href{https://www.spglobal.com/spdji/en/indices/equity/sp-500}{https://www.spglobal.com/spdji/en/indices/equity/sp-500}.}. Because this subsection only aims to illustrate the applicability of our results, we do not calibrate model parameters. However, from Figure \ref{fig:comparison}, we can see that our choices of parameters are reasonable. In the implementation of Algorithms \ref{exm:algo:policyiteration} and \ref{exm:algo:neuralTD}, we use two three-layer fully connected neural networks with Tanh and ReLU activation functions. The optimizer to update the network parameters is Adam, with a learning rate 0.001. The time horizon for each policy evaluation task is $T_m=500$. The batch size is $M=200$. After $L=100$ iterations of policy update, the stopping policy is displayed in Figure \ref{exm:fig:policy}. During the training process, we document TD loss (see line 4 of Algorithm \ref{exm:algo:neuralTD}) for both the outer iteration (i.e., for $l=0,1,2,\cdots,100$ in Algorithm \ref{exm:algo:policyiteration}) and the inner iteration of the last step (i.e., for $l=100$); see Figure \ref{exm:fig:loss}.

\begin{table}
\centering 
\begin{tabular}{@{}llllllll@{}}
\toprule
$\bar a$         & $\bar b$ & $\bar c$ & $\bar d$ & $P_0$ & $\beta$ & $K$  & $\lambda$ \\ \midrule
$1.07^{1/252}-1$ & 0.5      & 0.006    & 0.006    & 100   & 0.7     & 1.01 & 0.1      \\ \bottomrule
\end{tabular}
\caption{Model parameters in numerical experiments}
\label{exm:modelpara}
\end{table}

\begin{figure}
     \centering
         \includegraphics[width=1\textwidth]{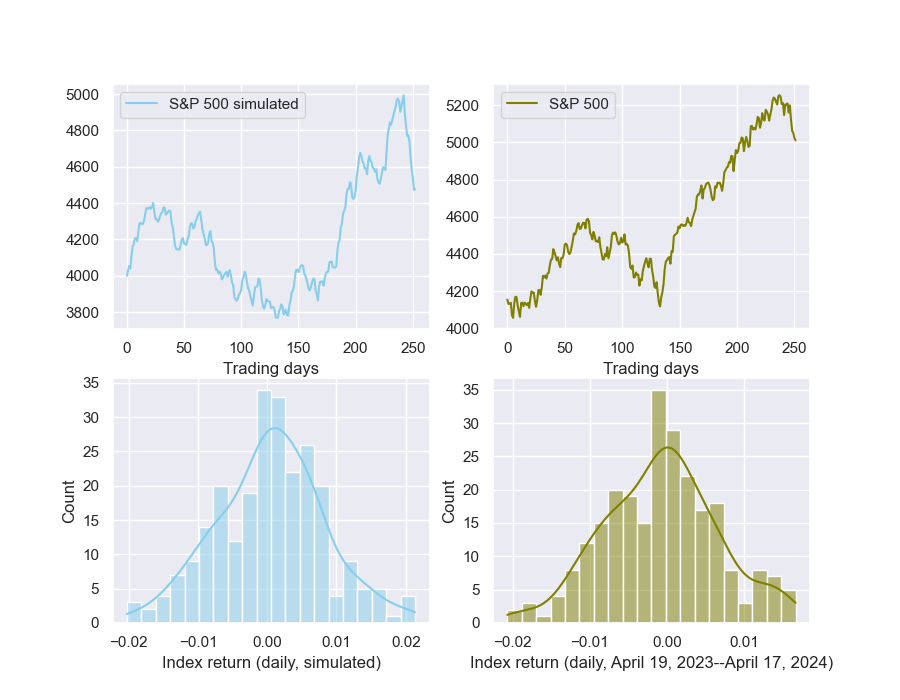}
         \caption{Simulated data from \eqref{exm:marketreturn} and \eqref{exm:index} v.s. Real-world data}
         \label{fig:comparison}
\end{figure}

\begin{figure}
     \centering
     \begin{subfigure}{0.7\textwidth}
         \centering
       \includegraphics[width=1\textwidth]{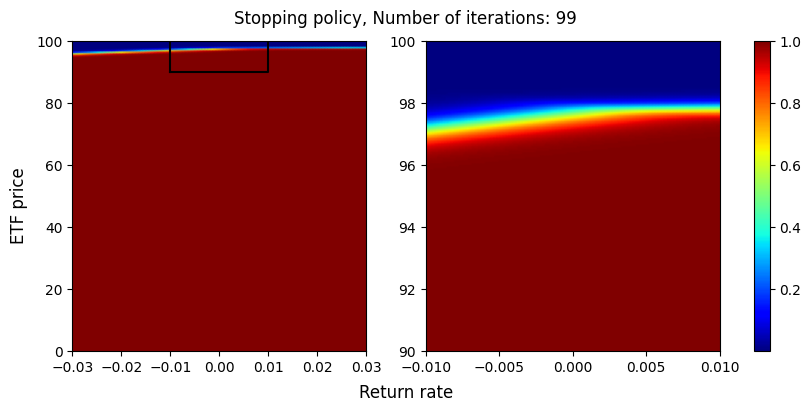}
     \end{subfigure}
     \hfill
     \begin{subfigure}{0.7\textwidth}
         \centering
         \includegraphics[width=1\textwidth]{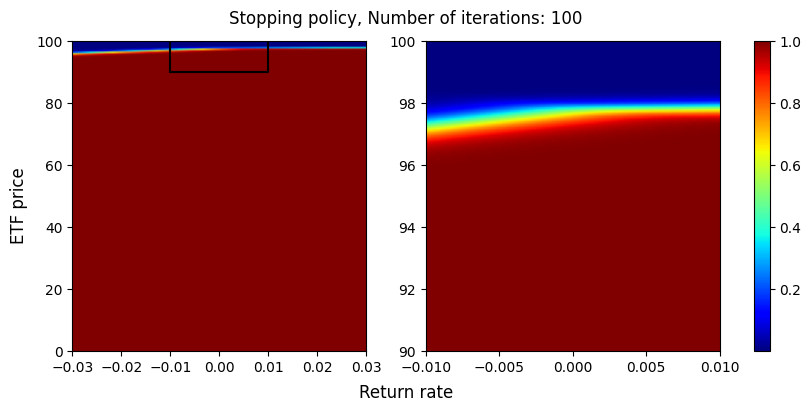}
     \end{subfigure}
     \caption{The output policy of Algorithm \ref{exm:algo:policyiteration}. The right panel contains zoom-in figures of the region indicated by black rectangles in the left panel.}
     \label{exm:fig:policy}
\end{figure}
 
 \begin{figure}
     \centering
     \begin{subfigure}[b]{0.45\textwidth}
         \centering
       \includegraphics[width=1\textwidth]{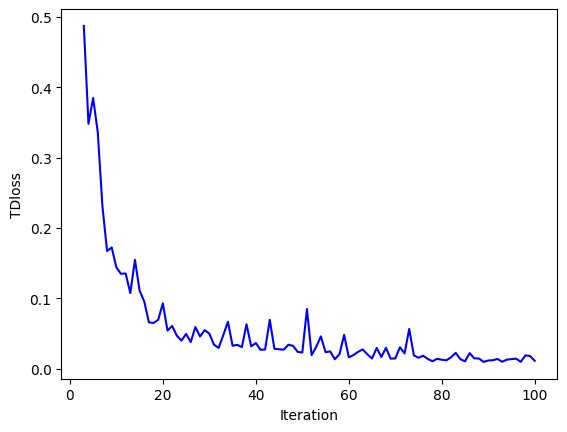}
         \caption{}
     \end{subfigure}
     \hfill
     \begin{subfigure}[b]{0.45\textwidth}
         \centering
         \includegraphics[width=1\textwidth]{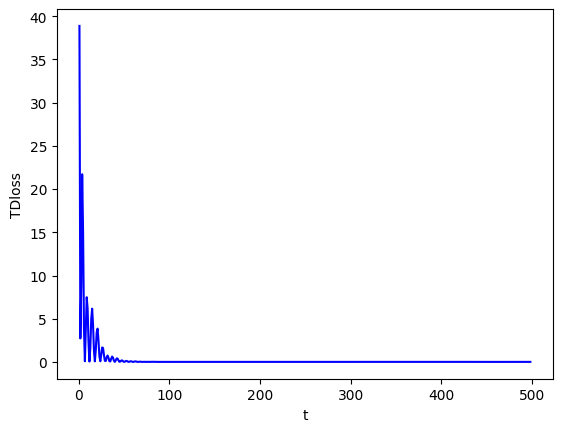}
         \caption{}
     \end{subfigure}
     \caption{(a): $TDloss$ after each outer iteration in Algorithm \ref{exm:algo:policyiteration}; (b): $TDloss$ in an inner iteration of TD learning when $l=100$.}
     \label{exm:fig:loss}
\end{figure}

It is clear from Figure \ref{exm:fig:policy} that the policy iteration becomes almost stationary after $l=100$. Therefore, the output policy after 100 iterations (in the second row of Figure \ref{exm:fig:policy}) can be accepted as an equilibrium policy. The stopping (exercise) region is red, and the continuation (holding) region is blue. There also exists a mixed region in which the stopping policy takes values in $(0,1)$ because we fix $\lambda=0.1$. It is not clear from the algorithm whether the mixed region will converge to a boundary when $\lambda\to 0$ because the algorithm becomes unstable for sufficiently small $\lambda$. Nevertheless, we have several observations from Figure \ref{exm:fig:policy}. First, the holding region is small. This is because day-to-day fluctuations of ETF prices are usually not too large (no larger than $\pm$2\% with high probability). Thus, if the current price is not close to the strike, rewards from a further decline of ETF prices do not exceed waiting costs (described by discount) plus upward risks. On the contrary, if the price is very close to the strike, holding may be profitable because the strike caps upward risks. Second, the holding region is larger when the return is negative. Such a result is intuitively correct, which is caused by the mean-field interactions in \eqref{exm:return:linear}. That is, a negative ETF return indicates a bad market performance, hence inducing a higher probability of further decline. Therefore, a lower exercise threshold brings more profit from selling the ETF at strike. 

\begin{remark}
	To simplify the illustration, we assume that a simulator of the mean-field dynamics is available. Therefore, we can focus on the social planner's problem in a mean-field model, which looks like a single-agent decision making. However, this is not the case in practice. First, there may exist more mean-field interaction terms other than the linear interaction as in \eqref{exm:return:linear}, hence loading only ETF or index data is not sufficient. We need to utilize original stock price data to compute state variables such as $\bar v_t$ in \eqref{exm:marketreturn} and \eqref{exm:average_sq_return}. Second, the simulator is not available in reality and scenario generation is known to be challenging in finance. Inspired by our results, we may first use stock price data, whose volume is large, to establish a prediction model of ETF data, and then turn to the regularized mean-field model, which is known to be solvable by Algorithms \ref{exm:algo:policyiteration} and \ref{exm:algo:neuralTD}. See the flow chart in Figure \ref{exm:fig:app} for an illustration of this procedure. Implementation of our method to real-world data will be left for future studies.
	\end{remark}

\tikzset{every picture/.style={line width=0.75pt}} 
\begin{figure}
\centering
\begin{tikzpicture}[x=0.75pt,y=0.75pt,yscale=-1,xscale=1]

\draw   (145,34) -- (260.87,34) -- (260.87,77.06) -- (145,77.06) -- cycle ;
\draw    (206.25,77.06) -- (206.25,104.12) ;
\draw [shift={(206.25,106.12)}, rotate = 270] [color={rgb, 255:red, 0; green, 0; blue, 0 }  ][line width=0.75]    (10.93,-3.29) .. controls (6.95,-1.4) and (3.31,-0.3) .. (0,0) .. controls (3.31,0.3) and (6.95,1.4) .. (10.93,3.29)   ;
\draw    (260.87,54.45) -- (428.06,54.45) ;
\draw    (428.06,54.45) -- (428.06,182.69) ;
\draw [shift={(428.06,184.69)}, rotate = 270] [color={rgb, 255:red, 0; green, 0; blue, 0 }  ][line width=0.75]    (10.93,-3.29) .. controls (6.95,-1.4) and (3.31,-0.3) .. (0,0) .. controls (3.31,0.3) and (6.95,1.4) .. (10.93,3.29)   ;
\draw   (146.66,189) -- (264.18,189) -- (264.18,227.75) -- (146.66,227.75) -- cycle ;
\draw   (148.31,113.65) -- (255.91,113.65) -- (255.91,153.48) -- (148.31,153.48) -- cycle ;
\draw    (205.94,155.63) -- (205.94,181.62) ;
\draw [shift={(205.94,183.62)}, rotate = 270] [color={rgb, 255:red, 0; green, 0; blue, 0 }  ][line width=0.75]    (10.93,-3.29) .. controls (6.95,-1.4) and (3.31,-0.3) .. (0,0) .. controls (3.31,0.3) and (6.95,1.4) .. (10.93,3.29)   ;
\draw   (370.13,186.85) -- (484.34,186.85) -- (484.34,227.75) -- (370.13,227.75) -- cycle ;
\draw    (272.46,210.53) -- (363.16,210.53) ;
\draw [shift={(365.16,210.53)}, rotate = 180] [color={rgb, 255:red, 0; green, 0; blue, 0 }  ][line width=0.75]    (10.93,-3.29) .. controls (6.95,-1.4) and (3.31,-0.3) .. (0,0) .. controls (3.31,0.3) and (6.95,1.4) .. (10.93,3.29)   ;
\draw    (204.59,233.13) -- (204.59,269.88) ;
\draw [shift={(204.59,271.88)}, rotate = 270] [color={rgb, 255:red, 0; green, 0; blue, 0 }  ][line width=0.75]    (10.93,-3.29) .. controls (6.95,-1.4) and (3.31,-0.3) .. (0,0) .. controls (3.31,0.3) and (6.95,1.4) .. (10.93,3.29)   ;
\draw   (146.66,278.34) -- (262.53,278.34) -- (262.53,314.94) -- (146.66,314.94) -- cycle ;
\draw    (204.59,316.01) -- (204.59,344) ;
\draw    (204.59,344) -- (429.72,342.92) ;
\draw    (429.72,342.92) -- (429.98,321) ;
\draw [shift={(430,319)}, rotate = 90.67] [color={rgb, 255:red, 0; green, 0; blue, 0 }  ][line width=0.75]    (10.93,-3.29) .. controls (6.95,-1.4) and (3.31,-0.3) .. (0,0) .. controls (3.31,0.3) and (6.95,1.4) .. (10.93,3.29)   ;
\draw   (371.78,277.26) -- (486,277.26) -- (486,313.86) -- (371.78,313.86) -- cycle ;
\draw    (429.72,276.19) -- (429.99,233) ;
\draw [shift={(430,231)}, rotate = 90.36] [color={rgb, 255:red, 0; green, 0; blue, 0 }  ][line width=0.75]    (10.93,-3.29) .. controls (6.95,-1.4) and (3.31,-0.3) .. (0,0) .. controls (3.31,0.3) and (6.95,1.4) .. (10.93,3.29)   ;

\draw (162,37) node [anchor=north west][inner sep=0.75pt]   [align=left] {Stock prices \\ \ \ \ \ \ \ data};
\draw (150.31,116.65) node [anchor=north west][inner sep=0.75pt]   [align=left] {\small Finite-stock \\model:\eqref{exm:stockprice}-\eqref{exm:return:model} };
\draw (168,197) node [anchor=north west][inner sep=0.75pt]   [align=left] {Simulator};
\draw (154.66,281.34) node [anchor=north west][inner sep=0.75pt]  [font=\small] [align=left] {Output exercise \\ \ \ \ \ \ policy};
\draw (394,198) node [anchor=north west][inner sep=0.75pt]   [align=left] {ETF data};
\draw (394,288) node [anchor=north west][inner sep=0.75pt]   [align=left] {ETF put};
\draw (441,243) node [anchor=north west][inner sep=0.75pt]   [align=left] {written on};
\draw (293,28) node [anchor=north west][inner sep=0.75pt]   [align=left] {ETF management company};
\draw (32,79) node [anchor=north west][inner sep=0.75pt]  [font=\small] [align=left] {Regression/non-parametric \\learning};
\draw (32,162) node [anchor=north west][inner sep=0.75pt]   [align=left] {Mean-field limit};
\draw (32,244) node [anchor=north west][inner sep=0.75pt]   [align=left] {Algorithms \ref{exm:algo:policyiteration} \& \ref{exm:algo:neuralTD}};
\draw (210,353) node [anchor=north west][inner sep=0.75pt]   [align=left] {Approximated equilibrium policy};
\draw (293,187) node [anchor=north west][inner sep=0.75pt]   [align=left] {Predict};
\end{tikzpicture}
\caption{Application of Subsection \ref{exm:ETF:option} to real-world ETF option exercise.}
\label{exm:fig:app}
\end{figure}
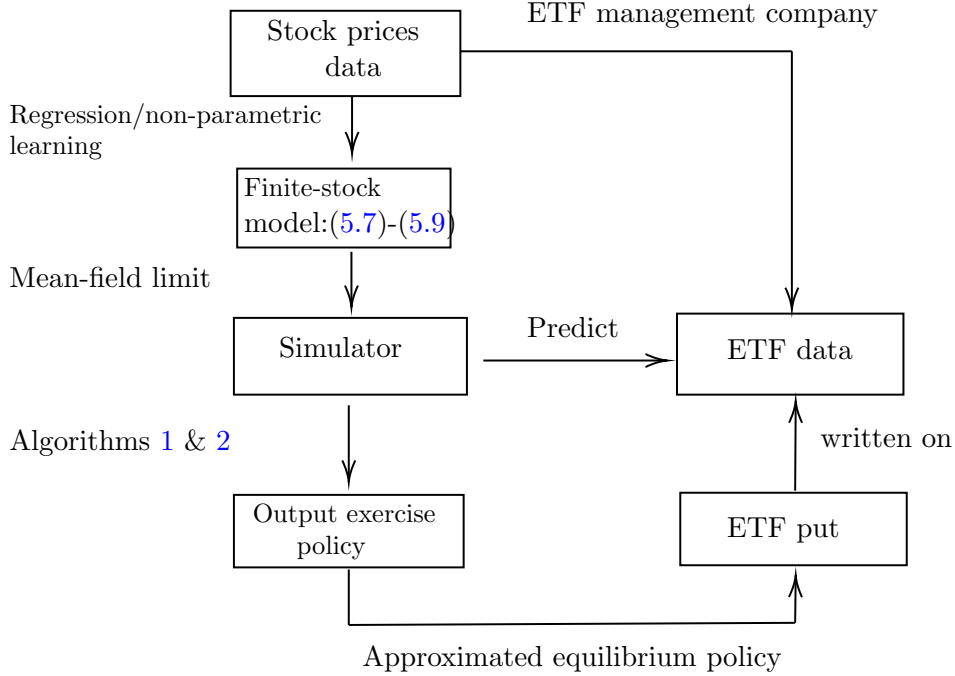

\section{  Extension to Asynchronous Stopping}\label{sec:extension}
{  
In the main body of the paper, we focus on the case of synchronous stopping, i.e., the social planner makes a decision to stop the entire system simultaneously. Such a formulation simplifies notations and some technical proofs. In Section \ref{sec:exm}, we also provide some financial applications that fit into this synchronous stopping formulation. In this section, we extend the previous formulation to allow for asynchronous stopping, i.e., only part of the system will be stopped. We show that when the system only depends on the distribution of {\it unstopped} ({\it surviving}) agents and does not depend on the survival ratio, most of our previous techniques and proofs remain valid. More importantly, we can still express a regularized equilibria as a fixed point in explicit form, so that the policy iteration algorithms in Subsection \ref{exm:ETF:option} are still applicable after mild modifications.

Consider an extended state space $S\cup \{\triangle_S\}$, where $\triangle_S$ denotes the stopped state. Any probability measure $\hat \mu \in \hat S := \cP(S\cup \{\triangle_S\})$ can be uniquely determined by a pair $(\bp_1 \hat \mu,\bp_2 \hat\mu)\in \cP(S)\times [0,1]$ via $\hat \mu \big|_S = \bp_1\hat \mu$ and $\hat \mu(\{\triangle_S\})=\bp_2 \hat\mu$ . Consider a transition function of the {\it representative agent} $T^r:S\times \hat S \times \{0,1\}\times \cZ\times \cZ \to S$, and denote $T^r_0:= T^r(\cdot,\cdot,0,\cdot,\cdot)$. Moreover, let $T^r(x,\hat \mu,1,z,z^0)=\triangle_S$ and $T^r(\triangle_S,\hat \mu,0,z,z^0)=\triangle_S$ for any $(x,\hat\mu,z,z^0)\in S\times \hat S\times \cZ\times \cZ$.
 Note that, in Section \ref{finiteconvergencesection}, the second variable of $T^r$ is in $\bar S = \cP(S)\cup \{\delta_{\triangle_S} \}$ because all agents either are continued (so that the distribution lies in $\cP(S)$) or stopped (so that the distribution equals to $\delta_{\triangle_S}$) in the synchronous stopping setting. In contrast, here we allow only a part of agents to be stopped, thus agents can distribute on $S\cup\{\triangle_S\}$. We make the following important assumption on $T^r$.
\begin{assumption}\label{assum:partialstopping}
	$T^r$ depends on $\hat\mu\in \hat S$ only through $\bp_1\hat \mu$. Specifically, for any $(x,a,z,z^0)\in S\times \{0,1\}\times \cZ\times \cZ$ and $\hat \mu,\hat \mu'\in \hat S$, if $\bp_1\hat \mu = \bp_1\hat \mu'$, then $T^r(x,\hat\mu,a,z,z^0)=T^r(x,\hat\mu',a,z,z^0)$.
	\end{assumption}
Under Assumption \ref{assum:partialstopping}, $T^r$ can be equivalently treated as a function on $S\times \cP(S)\times \cZ\times \cZ$, which induces a transition function $T_0:\cP(S)\times \cZ\to \cP(S)$ for the distribution of unstopped agents via $T_0(\mu,z^0) = T^r_0(\cdot,\mu,\cdot,z^0)_\# (\mu\times \cL(\cZ)'$. See similar relations in Subsection \ref{relationlimit}.

In the asynchronous stopping setting, the decision is still made by the social planner to maximize the average reward for all agents, and we still consider the randomized policies. However, a policy is now a function $\phi: S\times \cP(S)\to [0,1]$\footnote{Recall that in other sections of this paper, a policy is $\phi:\bar S\to [0,1]$}. That is, each agent receives the policy from the social planner, but whether the agent stops or not depends on his own state and the distribution of other unstopped agents. Moreover, agents have their own randomization devices $\{U^i\}_{i\in [N]}$ while in Section \ref{finiteconvergencesection} there is a common randomization device $U$. With these notations, for $i\in [N]$ we consider the finite-agent dynamics as follows:
\begin{equation}
 X^{i,N,\phi}_{k+1}=T^r\left(X^{i,N,\phi}_k,\frac{1}{N}\sum_{j\in [N]}\delta_{X_k^{i,N,\phi}},\ind_{\Big\{U^i_{k+1}\leq \phi\big(X^{i,N,\phi}_k,\frac{1}{N}\sum_{j\in [N]}\delta_{X^{i,N,\phi}_k}\big)\Big\}},Z^i_{k+1},Z^0_{k+1}\right). \label{partialstopping:finitedynamic}
 \end{equation}
The limiting mean-field dynamics are 
\begin{equation}
 X^{i,\phi}_{k+1}=T^r\left(X^{i,\phi}_k,\mP^0_{X^{i,\phi}_k},\ind_{\Big\{U^i_{k+1}\leq \phi\big(X^{i,\phi}_k,\bp_1 \mP^0_{X^{i,\phi}_k}\big)\Big\}},Z^i_{k+1},Z^0_{k+1}\right). \label{partialstopping:limitdynamic}
\end{equation}
The representative agent has a reward function $f:S\times \cP(S)\to \mR$. Therefore, if the social planner chooses a policy $\phi$, {\it the decoupled value function} of the representative agent is given by 
\begin{equation}\label{decoupled_J}
J^\phi_d(x,\mu) = \mE_d^{x,\mu,\phi} \sum_{k=0}^\infty \delta(k)( f(X_k,\mu_k)\phi(X_k,\mu_k)-\lambda \E(\phi(X_k,\mu_k))).
\end{equation}
Here, $\mE_d^{x,\mu,\phi}$ is the expectation operator under $\mP_d^{x,\mu,\phi}$, the probability measure on the canonical space $\Omega$ that is induced by the following transition rules:
\begin{align}
&\mu_{k+1} = T_0(\mu_k,Z^0_{k+1}), \quad \mu_0 = \mu,\\
&X_{k+1} = T^r(X_k, \mu_k, \ind_{\{U_{k+1}\leq \phi(X_k,\mu_k) \}} ,    Z_{k+1},   Z^0_{k+1}), \quad X_0 = x.
\end{align}
By setting $f(\triangle_S,\mu)=0$, we are safe to let $J^{\phi}_d(\triangle_S,\mu)=0$.

The goal of the social planner is to maximize $J^{\phi}(\mu) = \int_S J^\phi_d(x,\mu)\mu(dx)$. Because of the time-inconsistency due to general discount functions, the social planner aims to find a time-consistent equilibrium $\phi^*$ via the game theoretic thinking that
\begin{equation}\label{eqcond:partialstopping}
	J^{\psi \oplus_1 \phi^*}(\mu)\leq J^{\phi^*}(\mu),\forall \mu\in \cP(S),\psi:S\times \cP(S)\to [0,1].
\end{equation}
We can still transform the equilibrium condition \eqref{eqcond:partialstopping} to a fixed point problem similar to \eqref{equifix}. Indeed, it holds that
\begin{align}
	J^{\psi\oplus_1\phi^*}(\mu)=& \int_S J^{\psi\oplus_1\phi^*}_d(x,\mu)\\
	=&\int_S [f(x,\mu)\psi(x)-\lambda\E(\psi(x))]\mu(\md x)\\
	&+\int_S \Big[\mE_d^{x,\mu,\psi}\mE_d^{X_1,\mu_1,\phi^*}\sum_{k=1}^\infty\delta(k)(f(X_{k-1},\mu_{k-1})\phi(X_{k-1},\phi_{k-1})-\lambda\E(\phi(X_{k-1},\phi_{k-1}))  \Big]\mu(\md x)\\
	=&\int_S [f(x,\mu)\psi(x)-\lambda\E(\psi(x))+\mE^{x,\mu,\psi}\tilde{J}^{\phi^*}_d(X_1,\mu_1)]\mu(\md x)\\
	=&\int_S \Big[f(x,\mu)\psi(x)-\lambda\E(\psi(x))\\
	&+\mE^{U_1,Z_1,Z^0_1}\tilde{J}^{\phi^*}_d\big(T^r(x,\mu,\ind_{\{U_1\leq \psi(x)\}},Z_1,Z^0_1),T_0(\mu,Z^0)\big)\Big]\mu(\md x)\\
	=&\int_S \Big[f(x,\mu)\psi(x)-\lambda\E(\psi(x))\\
	&+(1-\psi(x))\mE^{Z,Z^0}\tilde{J}^{\phi^*}_d\big(T^r_0(x,\mu,Z,Z^0),T_0(\mu,Z^0)\big)\Big]\mu(\md x).\label{equichar:partialstopping}
\end{align}
Clearly, \eqref{eqcond:partialstopping} is satisfied if $\phi^*(\cdot,\mu)$ maximizes the integrand over $\psi$, which is equivalent to 
\begin{equation}\label{equifix:partialstopping}
	\phi^*(x,\mu) = \frac{\exp\Big(\frac{1}{\lambda}f(x,\mu)\Big)}{\exp\Big(\frac{1}{\lambda}f(x,\mu)\Big)+\exp\Big(\frac{1}{\lambda}f^{c,d}_{\phi^*}(x,\mu) \Big)},
\end{equation}
where $f^{c,r}_{\phi^*}(x,\mu) = \mE^{Z,Z^0}\tilde{J}^{\phi^*}_d\big(T^r_0(x,\mu,Z,Z^0),T_0(\mu,Z^0)\big)$\footnote{The superscript $^c$ stands for "continuation" and $^r$ stands for "representative agent".} is the reward of continuation. To derive \eqref{equifix:partialstopping}, Assumption \ref{assum:partialstopping} is crucial because it gives a transition rule of unstopped distribution that does not depend on $\psi$. Consequently, the maximizer of integrand in \eqref{equichar:partialstopping} can be solved explicitly.

To study \eqref{equifix:partialstopping}, our techniques and proofs in Section \ref{proofs} are still applicable with appropriate assumptions on $Z$, $Z^0$, $T^r_0$ and $T_0$ because we mainly use the compactness of $\bar S$. Similarly, we may also prove the convergence of \eqref{partialstopping:finitedynamic} to \eqref{partialstopping:limitdynamic} by arguments in Section \ref{finiteconvergencesection}. However, the case when $T^r$ and $f$ depend on both $\bp_1\hat \mu$ and $\bp_2\hat \mu$ (i.e., survival ratio will also affect transitions and rewards) is more sophisticated. On the other hand, taking $S\times \{0,1\}$ as the state space, we may also allow {\it distribution of stopped agents} (not only the survival ratio) to affect the system. These two extensions evidently require new tools that are beyond the scope of this paper and will be left for future studies.

}

\section{Proofs}\label{proofs}
In this section, we prove Theorems \ref{fixedpoint} and \ref{existoriginal}. To this end let us consider the following subset of $C(\barS)$:
\begin{definition}
	We define $L_K$ as the set of Lipchitz continuous functions with the Lipchitz constant $K>0$, and $B_R$ as a ball in $C(\barS)$ with radius $R>0$ centered at 0. In other words:
	\begin{align*}
	&L_K:=\{f\in C(\barS):\|f\|_{\rm Lip}\leq K \},\\
	&B_R:= \{f\in C(\barS):\|f\|_\infty \leq R    \},
	\end{align*}
where
\[
\|f\|_{\rm Lip}:= \sup_{\mu,\mu'\in \barS,\mu\neq \mu'}\frac{|f(\mu)-f(\mu')|}{\cW_1(\mu,\mu')}. 
\]
\end{definition}
The proof proceeds as follows: first, as preliminary results, we provide estimations of $\Ti^\lambda_1 v$ and the transition functions; then we consider regularized operator to ensure certain compactness, thus Schauder's fixed point theorem is applicable; finally we consider the limit $\lambda\to 0$ and prove the existence of fixed points of the original operator. Throughout this section, we assume that Assumptions \ref{as1}, \ref{Tassumption} and \ref{Tassumption2} are in force.
\subsection{Preliminary Results}
This subsection establishes several technical results that will be used repeatedly in the proofs of main theorems. We first provide simple estimations of $\Ti^\lambda_1v$, both in uniform norms and in Lipschitz seminorms. At the last of this subsection, we will provide $\cW_1$ Lipschitz estimations for the Markov family $\mP^{\mu,\phi}$. These estimations are crucial for the application of Schauder's fixed point theorem and the analysis of the limit $\lambda\to 0$. We emphasize that because $\barS$ is inherently uncountable even if $S$ is finite, we can not apply results in \cite{Bayraktar2023} and a large part of our proofs are novel.

\begin{lemma}\label{phiestuniformlemma}
  There exists a $K_1:=K_1(\lambda,\|v\|_{\infty})>0$ such that for any $v\in C(\barS)$ and $\mu,\mu'\in \barS$, we have that
    \begin{align}
        &\frac{1}{1+\exp(\frac{1}{\lambda}(\|v\|_{\infty}+L_1))}\leq \Ti^\lambda_1(v)(\mu)\leq 1,\label{phiestuniform}\\
     \text{and}\ \ \   &
         |\Ti^\lambda_1(v)(\mu)-\Ti^\lambda_1(v)(\mu')|\leq K_1\cW_1(\mu,\mu').\label{phiestLip}
    \end{align}
\end{lemma}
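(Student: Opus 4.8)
The plan is to rewrite $\Ti^\lambda_1(v)$ in logistic form and treat the two claims separately. Dividing numerator and denominator in \eqref{T1def} by $\exp(\tfrac{1}{\lambda}r(\mu))$, I would write
\[
\Ti^\lambda_1(v)(\mu) = \frac{1}{1 + \exp\left(\frac{1}{\lambda}\left[\mE^0 v(T_0(\mu,Z^0)) - r(\mu)\right]\right)} = \sigma\!\left(\tfrac{1}{\lambda}h(\mu)\right),
\]
where $\sigma(t) := 1/(1+e^t)$ and $h(\mu) := \mE^0 v(T_0(\mu,Z^0)) - r(\mu)$. The upper bound $\Ti^\lambda_1(v)(\mu) \le 1$ in \eqref{phiestuniform} is immediate, since the exponential in the denominator is positive. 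For the lower bound I would bound the exponent from above using $\mE^0 v(T_0(\mu,Z^0)) \le \|v\|_\infty$ together with $-r(\mu) \le L_1$ (recall $\sup_{\mu}|r(\mu)| \le L_1$ from the remark following Assumption \ref{as1}), giving $h(\mu) \le \|v\|_\infty + L_1$ and hence the claimed lower bound in \eqref{phiestuniform}.

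For the Lipschitz estimate \eqref{phiestLip}, I would use the fact that $\sigma$ is globally $\tfrac14$-Lipschitz (since $|\sigma'(t)| = e^t/(1+e^t)^2 \le 1/4$), so that
\[
|\Ti^\lambda_1(v)(\mu) - \Ti^\lambda_1(v)(\mu')| \le \frac{1}{4\lambda}\,|h(\mu) - h(\mu')|,
\]
reducing matters to a Lipschitz estimate for $h$. Splitting $h$ into its two parts, the reward term is controlled directly by Assumption \ref{as1}: $|r(\mu) - r(\mu')| \le L_2\,\cW_1(\mu,\mu')$.

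The main (and essentially the only nontrivial) step is the transition term $|\mE^0 v(T_0(\mu,Z^0)) - \mE^0 v(T_0(\mu',Z^0))|$, and the crucial observation is that $v$ is only assumed continuous, \emph{not} Lipschitz, so one cannot push $v$ through the map via Assumption \ref{Tassumption2}. Instead I would write $\mE^0 v(T_0(\mu,Z^0)) = \int_{\barS} v\,\md T^\mu$ with $T^\mu := T_0(\mu,\cdot)_\#\cL(Z)$, and then use the duality bound $\left|\int_{\barS} v\,\md T^\mu - \int_{\barS} v\,\md T^{\mu'}\right| \le 2\|v\|_\infty\,\bar{d}_{\rm TV}(T^\mu, T^{\mu'})$ combined with Assumption \ref{Tassumption} to obtain $\le 2\|v\|_\infty L_3\,\cW_1(\mu,\mu')$. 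This is precisely where the common noise is indispensable: as noted after Assumption \ref{Tassumption}, without it the total variation between two transited point masses is identically $1$, and no such estimate could hold for general continuous $v$. Combining the two pieces yields $|h(\mu)-h(\mu')| \le (2\|v\|_\infty L_3 + L_2)\,\cW_1(\mu,\mu')$, and therefore \eqref{phiestLip} with the explicit constant
\[
K_1 = \frac{1}{4\lambda}\left(2\|v\|_\infty L_3 + L_2\right),
\]
which depends only on $\lambda$ and $\|v\|_\infty$, as asserted.
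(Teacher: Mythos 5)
Your proof is correct and follows the same basic route as the paper's: both read \eqref{phiestuniform} directly off the explicit formula \eqref{T1def}, and both reduce \eqref{phiestLip} to a Lipschitz bound on the exponent $h(\mu)=\mE^0 v(T_0(\mu,Z^0))-r(\mu)$, controlling the transition term through Assumption \ref{Tassumption} (total variation — exactly what permits merely continuous, non-Lipschitz $v$) and the reward term through Assumption \ref{as1}. The one genuine difference is the final step. The paper applies the mean value theorem to $\exp(\Hi^v_\lambda)$ and bounds the resulting factor crudely by $\exp\left(\frac{1}{\lambda}(\|v\|_\infty+L_1)\right)$, arriving at $K_1=\frac{1}{\lambda}\exp\left(\frac{1}{\lambda}(\|v\|_{\infty}+L_1)\right)(L_3\|v\|_{\infty}+L_2)$; you instead exploit the global $\tfrac14$-Lipschitz property of the logistic function $t\mapsto 1/(1+e^t)$, obtaining $K_1=\frac{1}{4\lambda}(2L_3\|v\|_\infty+L_2)$. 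Your constant grows only like $1/\lambda$ as $\lambda\to 0$ rather than like $\frac{1}{\lambda}e^{C/\lambda}$, a harmless but real sharpening: it would, for instance, improve the order estimate in the remark following Theorem \ref{epsiloneq-N}, where $K_1$ enters through $\|\phi_\lambda\|_{\rm Lip}$. Nothing in the paper's subsequent arguments requires this improvement, however, since the regularized discount $\delta_\lambda$ absorbs any $\lambda$-dependent geometric factor. One cosmetic discrepancy: you include the factor $2$ in the duality bound $\left|\int_{\barS} v\,\md T^\mu-\int_{\barS} v\,\md T^{\mu'}\right|\le 2\|v\|_\infty\,\bar{d}_{\rm TV}(T^\mu,T^{\mu'})$, which is the correct sharp form under the normalization $\bar{d}_{\rm TV}(\mu,\nu)=\sup_A|\mu(A)-\nu(A)|$ implicit in Example \ref{exm1}; the paper drops this factor, an immaterial slip since constants are not tracked.
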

\begin{proof}
\eqref{phiestuniform} is a direct consequence of the expression \eqref{T1def}. To show \eqref{phiestLip}, we first note that 
\begin{align*}
|\mE^0v(T_0(\mu,Z^0))-\mE^0v(T_0(\mu',Z^0))|&\leq \|v\|_{\infty}\bar{d}_{\rm TV}(T_0(\mu,\cdot)_\#\cL(Z),T_0(\mu',\cdot)_\#\cL(Z))\\
&\leq \|v\|_{\infty}L_3\cW_1(\mu,\mu').
\end{align*}
Therefore, denoting $\Hi^v_\lambda(\mu):= \frac{1}{\lambda}[\mE^0v(T_0(\mu,Z^0))-r(\mu)]$, we have
\begin{align*}
    |\Ti^\lambda_1(v)(\mu)-\Ti^\lambda_1(v)(\mu')|&=\frac{|\exp(\Hi^v_\lambda(\mu))-\exp(\Hi^v_\lambda(\mu'))|}{(1+\exp(\Hi^v_\lambda(\mu)))(1+ \exp(\Hi^v_\lambda(\mu')) )}\\
    &\leq \frac{\int_0^1\exp(\alpha\Hi^v_\lambda(\mu)+(1-\alpha)\Hi^v_\lambda(\mu') )\md \alpha}{(1+\exp(\Hi^v_\lambda(\mu)))(1+ \exp(\Hi^v_\lambda(\mu')) )} |\Hi^v_\lambda(\mu)-\Hi^v_\lambda(\mu')|\\
    &\leq \frac{1}{\lambda}\exp\left(\frac{1}{\lambda}(\|v\|_{\infty}+L_1)\right)(L_3\|v\|_{\infty}+L_2)\cW_1(\mu,\mu').
\end{align*}
That is, \eqref{phiestLip} indeed holds by choosing $K_1=\frac{1}{\lambda}\exp\left(\frac{1}{\lambda}(\|v\|_{\infty}+L_1)\right)(L_3\|v\|_{\infty}+L_2)$.
\end{proof}

\begin{lemma}\label{W1estLemma}
    There exists a $K_2=K_2(\lambda,\|v\|_{\infty})>0$ such that for $v\in C(\barS)$, $\phi=\Ti^\lambda_1(v)$ and any $\mu,\mu'\in \barS$, we have that
    \begin{equation}\label{W1est}
        \overline{\cW}_1(\mP^{\mu,\phi}_k,\mP^{\mu',\phi}_k)\leq K_2^k\cW_1(\mu,\mu').
    \end{equation}
  Hereafter, for any $\mu\in \barS$, $\phi\in \F$, and $k\in \mT$, $\mP^{\mu,\phi}_k:= (\mu_k)_\# \mP^{\mu,\phi}$ denotes the marginal distribution of $\mu_k$ under $\mP^{\mu,\phi}$. 
\end{lemma}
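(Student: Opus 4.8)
The plan is to exploit the time-homogeneous Markov structure of the canonical process $\{\mu_k\}_{k\in\mT}$ under the stationary policy $\phi$ and to reduce the claim to a single one-step Lipschitz estimate that is then iterated. First I would observe that, since $\phi$ is stationary, $\{\mu_k\}$ is a Markov chain on $\barS$ whose one-step kernel from a point $\nu$ is
\[
P(\nu,\cdot)=\phi(\nu)\,\delta_{\triangle}+(1-\phi(\nu))\,T_0(\nu,\cdot)_{\#}\cL(Z),
\]
so that the marginal laws satisfy $\mP^{\mu,\phi}_{k+1}=\Psi(\mP^{\mu,\phi}_k)$, where $\Psi:\cP(\barS)\to\cP(\barS)$ is the push-forward operator $\Psi(\rho)=\int_{\barS}P(\nu,\cdot)\,\rho(\md\nu)$. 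Because $\mP^{\mu,\phi}_0=\delta_\mu$ and $\overline{\cW}_1(\delta_\mu,\delta_{\mu'})=\cW_1(\mu,\mu')$, it suffices to prove that $\Psi$ is $K_2$-Lipschitz for the metric $\overline{\cW}_1$, after which \eqref{W1est} follows by a trivial induction on $k$.

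To obtain the one-step estimate I would use a coupling driven by common randomness. Fix $\rho,\rho'\in\cP(\barS)$ and let $\gamma$ be an optimal coupling for $\overline{\cW}_1(\rho,\rho')$. On an auxiliary space I draw $(\nu,\nu')\sim\gamma$ together with an independent uniform $U$ on $[0,1]$ and an independent copy $Z$ of the common noise; then I set $\mu_1=\triangle$ if $U\le\phi(\nu)$ and $\mu_1=T_0(\nu,Z)$ otherwise, and define $\mu_1'$ analogously with $\nu'$. By construction $(\mu_1,\mu_1')$ is a coupling of $\Psi(\rho)$ and $\Psi(\rho')$. Splitting on the events ``both stop'', ``both continue'', and ``exactly one stops'', the continuation event contributes $\cW_1(T_0(\nu,Z),T_0(\nu',Z))\le L_4\cW_1(\nu,\nu')$ by Assumption \ref{Tassumption2}, while the discrepancy event has probability $|\phi(\nu)-\phi(\nu')|$ and incurs a cost bounded by the fixed distance $C$ (with $\cW_1(\cdot,\triangle)=C$ on $\cP(S)$). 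Using the Lipschitz bound $|\phi(\nu)-\phi(\nu')|\le K_1\cW_1(\nu,\nu')$ from Lemma \ref{phiestuniformlemma}, the pointwise one-step cost is at most $(L_4+C K_1)\cW_1(\nu,\nu')$; integrating against $\gamma$ gives $\overline{\cW}_1(\Psi(\rho),\Psi(\rho'))\le(L_4+CK_1)\,\overline{\cW}_1(\rho,\rho')$, so one may take $K_2:=L_4+CK_1$, which depends only on $\lambda$ and $\|v\|_\infty$ through $K_1$.

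The main obstacle is the handling of the ``exactly one stops'' event: when one trajectory jumps to the absorbing state $\triangle$ while the other remains in $\cP(S)$, the incurred $\cW_1$-cost is the full constant $C$ and cannot be made small by the closeness of $\nu$ and $\nu'$. The only way to control its total contribution is to bound the \emph{probability} of this event, which equals $|\phi(\nu)-\phi(\nu')|$; this is precisely where the Lipschitz continuity of the stopping probability $\phi=\Ti^\lambda_1(v)$ established in Lemma \ref{phiestuniformlemma} is indispensable, and, through it, where Assumption \ref{Tassumption} enters. Using the explicit common-$U$, common-$Z$ coupling rather than an abstract gluing of the kernels $P(\nu,\cdot)$ and $P(\nu',\cdot)$ also sidesteps any measurable-selection issues. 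With the one-step contraction-type bound in hand, iterating $\Psi$ from $\delta_\mu$ and $\delta_{\mu'}$ yields \eqref{W1est}.
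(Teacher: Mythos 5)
Your proposal is correct and follows essentially the same argument as the paper's proof: a one-step coupling of the two chains through a common uniform variable $U$ and a common noise $Z$, splitting into the ``both continue'' event (controlled by Assumption \ref{Tassumption2}) and the ``exactly one stops'' event (whose probability $|\phi(\nu)-\phi(\nu')|$ is controlled via Lemma \ref{phiestuniformlemma}, with cost capped by $C$), followed by induction from $\delta_\mu$, $\delta_{\mu'}$. The only cosmetic differences are that you fix an optimal coupling at the outset while the paper couples arbitrarily and then uses the arbitrariness of the coupling, and your constant $L_4+CK_1$ is slightly sharper than the paper's $L_4+2CK_1$; both are immaterial.
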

\begin{proof}
 For $k\geq 1$, let us consider any pair of random variables $\mu_{k-1}$ and $\mu'_{k-1}$\footnote{We slightly abuse the notation because $\{\mu_k\}_{k\in \mT}$ is the canonical process in the main body of the paper. However, this should not produce confusion because it only appears in the proofs. The same comment applies to the proof of Lemma \ref{W1estphiLemma}.} on the same probability space $(\bar{\Omega},\bar{F},\bar{\mP})$, with margins $\mP^{\mu,\phi}_{k-1}$ and $\mP^{\mu',\phi}_{k-1}$ respectively. Consider also another two probability spaces $(\hat{\Omega},\hat{\F},\hat{\mP})$ and $(\tilde{\Omega},\tilde{F},\tilde{\mP})$. On $\hat{\Omega}$ we consider a random variable $U$ which has uniform distribution on $[0,1]$ and on $\tilde{\Omega}$ we consider a random variable $Z$ with distribution $\cL(\cZ)$ on $\cZ$ (i.e., the common noise). Finally, define $(\check{\Omega},\check{\F},\check{\mP})$ to be the product space of three aforementioned probability spaces. All random variables are naturally lifted as random variables on $\check{\Omega}$. Consider
 \begin{equation}\label{coupling1}
 \begin{aligned}
     &\mu_k=T(\mu_{k-1},\ind_{\{U\leq \phi(\mu_{k-1})\}},Z),\\
    &\mu'_k=T(\mu'_{k-1},\ind_{\{U\leq \phi(\mu'_{k-1})\}},Z),
 \end{aligned}
 \end{equation}
 which are random variables on $\check{\Omega}$. By the definition of $\mP^{\mu,\phi}$ (see \eqref{transition}), $\mu_k$ has distribution $\mP^{\mu,\phi}_k$ and $\mu_k'$ has distribution $\mP^{\mu',\phi}_k$. In other words, they constitute one specific coupling of $(\mP^{\mu,\phi}_{k},\mP^{\mu',\phi}_{k})$. By definition of the Wasserstein metric,
 \begin{equation}\label{W1est1}
 \begin{aligned}
     \overline{\cW}_1(\mP^{\mu,\phi}_k,\mP^{\mu',\phi}_k)\leq &\check{\mE}\left[ \cW_1(T(\mu_{k-1},\ind_{\{U\leq \phi(\mu_{k-1})\}},Z), T(\mu'_{k-1},\ind_{\{U\leq \phi(\mu'_{k-1})\}},Z))                 \right]\\
     \leq& \check{\mE}\left[ \cW_1(T(\mu_{k-1},\ind_{\{U\leq \phi(\mu_{k-1})\}},Z), T(\mu'_{k-1},\ind_{\{U\leq \phi(\mu_{k-1})\}},Z))                 \right]\\
     &+\check{\mE}\left[ \cW_1(T(\mu’_{k-1},\ind_{\{U\leq \phi(\mu_{k-1})\}},Z), T(\mu'_{k-1},\ind_{\{U\leq \phi(\mu'_{k-1})\}},Z))                 \right]    \\
     \leq &L_4\check{\mE}\cW_1(\mu_{k-1},\mu_{k-1}')+2\check{\mE}\left[ \cW_1(T_0(\mu’_{k-1},Z), \triangle)\right]\ind_{\{U\in [\phi(\mu_{k-1}),\phi(\mu'_{k-1})]\}}\\
     \leq &(L_4+2CK_1)\check{\mE}\cW_1(\mu_{k-1},\mu_{k-1}').
\end{aligned}
\end{equation}
The last inequality is due to \eqref{phiestLip} and the fact that $\cW(\mu,\nu)\leq C$ for all $\mu,\nu\in \barS$. By the arbitrariness of $\mu_{k-1}$ and $\mu_{k-1}'$, we conclude
\[
\overline{\cW}_1(\mP^{\mu,\phi}_k,\mP^{\mu',\phi}_k)\leq K_2\overline{\cW_1}(\mP^{\mu,\phi}_{k-1},\mP^{\mu',\phi}_{k-1}).
\]
Here, we choose $K_2:=L_4+2CK_1$. By the induction and noting that $\mP^{\mu,\phi}_0=\delta_\mu$, $\overline{\cW_1}(\delta_\mu,\delta_{\mu'})=\cW_1(\mu,\mu')$, we obtain the desired \eqref{W1est}.
\end{proof}

\begin{lemma}\label{W1estphiLemma}
    For any $K>0$, there exists a $K_3=K_3(\lambda,K)>0$ such that for $\mu\in \barS$ and $\phi,\phi'\in L_K$, we have
    \begin{equation}\label{W1estphi}
        \overline{\cW}_1(\mP^{\mu,\phi}_k,\mP^{\mu,\phi'}_k)\leq K_3^k\|\phi-\phi'\|_{\infty}.
    \end{equation}
\end{lemma}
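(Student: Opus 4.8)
The plan is to mirror the coupling-plus-induction scheme already carried out for Lemma \ref{W1estLemma}, but now with the initial point $\mu$ held fixed and the \emph{policy} varied. I would induct on $k$; the base case $k=0$ is immediate because $\mP^{\mu,\phi}_0=\mP^{\mu,\phi'}_0=\delta_\mu$, so both sides of \eqref{W1estphi} vanish. For the inductive step I would place a coupling $(\mu_{k-1},\mu'_{k-1})$ of $(\mP^{\mu,\phi}_{k-1},\mP^{\mu,\phi'}_{k-1})$ on a common space $\bar\Omega$, adjoin one shared uniform variable $U$ on $[0,1]$ and one shared common-noise variable $Z\sim\cL(\cZ)$ on the product space $\check\Omega$ (exactly as in the proof of Lemma \ref{W1estLemma}), and push forward by the transition rule, crucially using the \emph{two different} policies:
\[
\mu_k=T(\mu_{k-1},\ind_{\{U\leq\phi(\mu_{k-1})\}},Z),\qquad \mu'_k=T(\mu'_{k-1},\ind_{\{U\leq\phi'(\mu'_{k-1})\}},Z).
\]
These have the correct marginals $\mP^{\mu,\phi}_k$ and $\mP^{\mu,\phi'}_k$, so $\overline{\cW}_1(\mP^{\mu,\phi}_k,\mP^{\mu,\phi'}_k)$ is bounded by the expected distance of this coupling.

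Next I would split the one-step distance by the triangle inequality through the intermediate point $T(\mu'_{k-1},\ind_{\{U\leq\phi(\mu_{k-1})\}},Z)$, thereby separating the effect of changing the state from that of changing the action. The state term shares the same action, so it equals $\cW_1(T_0(\mu_{k-1},Z),T_0(\mu'_{k-1},Z))\leq L_4\cW_1(\mu_{k-1},\mu'_{k-1})$ when continuing (by Assumption \ref{Tassumption2}) and $0$ when stopping, hence is $\leq L_4\cW_1(\mu_{k-1},\mu'_{k-1})$ in all cases. The action term shares the state $\mu'_{k-1}$, and the two actions disagree precisely on the symmetric difference of $\{U\leq\phi(\mu_{k-1})\}$ and $\{U\leq\phi'(\mu'_{k-1})\}$, whose conditional probability in $U$ is $|\phi(\mu_{k-1})-\phi'(\mu'_{k-1})|$; on that event the distance is the diameter bound $\cW_1(T_0(\mu'_{k-1},Z),\triangle)=C$. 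I would then invoke $\phi\in L_K$ to write $|\phi(\mu_{k-1})-\phi'(\mu'_{k-1})|\leq K\cW_1(\mu_{k-1},\mu'_{k-1})+\|\phi-\phi'\|_\infty$, which mixes a contraction-type term with the genuinely new additive term $\|\phi-\phi'\|_\infty$. Taking $\check{\mE}$ and minimizing over the coupling of $(\mu_{k-1},\mu'_{k-1})$ yields the inhomogeneous recursion
\[
d_k\leq (L_4+CK)\,d_{k-1}+C\|\phi-\phi'\|_\infty,\qquad d_k:=\overline{\cW}_1(\mP^{\mu,\phi}_k,\mP^{\mu,\phi'}_k),\ \ d_0=0.
\]

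Finally I would solve this recursion. Choosing $K_3:=1+L_4+C(K+1)$ guarantees $K_3\geq 1$ and $K_3\geq (L_4+CK)+C$, so that for every $k\geq 1$ one has $(L_4+CK)K_3^{k-1}+C\leq K_3\cdot K_3^{k-1}=K_3^k$ (using $K_3^{k-1}\geq 1$); a one-line induction then upgrades the recursion to the stated geometric bound $d_k\leq K_3^k\|\phi-\phi'\|_\infty$. The asserted $\lambda$-dependence of $K_3$ is harmless, since the resulting constant is in fact $\lambda$-independent. I expect the main obstacle, relative to Lemma \ref{W1estLemma}, to be exactly the appearance of the constant (non-contracting) term $C\|\phi-\phi'\|_\infty$: the recursion is no longer homogeneous, so one must extract the clean bound $K_3^k$ from an inhomogeneous linear recursion and pick $K_3$ large enough to absorb both the per-step amplification $L_4+CK$ and the additive error entering at every level. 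The only other delicate point is the rigorous identification of the $U$-symmetric-difference probability with $|\phi(\mu_{k-1})-\phi'(\mu'_{k-1})|$ together with the verification that the stop/continue mismatch contributes exactly the distance $C$ there, which is the analogue of the $\ind_{\{U\in[\phi(\mu_{k-1}),\phi(\mu'_{k-1})]\}}$ estimate in \eqref{W1est1}.
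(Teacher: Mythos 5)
Your proposal is correct and follows essentially the same route as the paper's proof: the identical coupling with a shared uniform variable $U$ and common noise $Z$ but two different policies, the same triangle-inequality split separating the state change from the action mismatch, and the same inhomogeneous recursion $d_k\leq (L_4+\mathrm{const}\cdot K)d_{k-1}+\mathrm{const}\cdot\|\phi-\phi'\|_\infty$ with $d_0=0$. The only cosmetic differences are your constant $C$ in place of the paper's (slightly cruder) $2C$ on the mismatch event, and your explicit choice $K_3=1+L_4+C(K+1)$ to close the induction, where the paper solves the recursion to $\frac{(L_4+2CK)^k-1}{L_4+2CK-1}2C\|\phi-\phi'\|_\infty$ and then takes $K_3$ large enough; your observation that $K_3$ is in fact $\lambda$-independent is also accurate.
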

\begin{proof}
    We use a similar coupling method as in the proof of Lemma \ref{W1estLemma}. However, instead of \eqref{coupling1}, we use the following coupling:
    \begin{equation*}
 \begin{aligned}
     &\mu_k=T(\mu_{k-1},\ind_{\{U\leq \phi(\mu_{k-1})\}},Z),\\
\text{and}\ \ \   &\mu'_k=T(\mu'_{k-1},\ind_{\{U\leq \phi'(\mu'_{k-1})\}},Z),
 \end{aligned}
 \end{equation*}
 where $(\mu_{k-1},\mu'_{k-1})$ has marginals $(\mP^{\mu,\phi}_{k-1},\mP^{\mu,\phi'}_{k-1})$, while all other steps of argument remain the same as in the proof of Lemma \ref{W1estLemma}. Similar to \eqref{W1est}, we have
  \begin{equation*}
 \begin{aligned}
     \overline{\cW}_1(\mP^{\mu,\phi}_k,\mP^{\mu,\phi'}_k)\leq &\check{\mE}\left[ \cW_1(T(\mu_{k-1},\ind_{\{U\leq \phi(\mu_{k-1})\}},Z), T(\mu'_{k-1},\ind_{\{U\leq \phi'(\mu'_{k-1})\}},Z))                 \right]\\
     \leq& \check{\mE}\left[ \cW_1(T(\mu_{k-1},\ind_{\{U\leq \phi(\mu_{k-1})\}},Z), T(\mu'_{k-1},\ind_{\{U\leq \phi(\mu_{k-1})\}},Z))                 \right]\\
     &+\check{\mE}\left[ \cW_1(T(\mu’_{k-1},\ind_{\{U\leq \phi(\mu_{k-1})\}},Z), T(\mu'_{k-1},\ind_{\{U\leq \phi'(\mu'_{k-1})\}},Z))                 \right]    \\
     \leq &L_4\check{\mE}\cW_1(\mu_{k-1},\mu_{k-1}')+2\check{\mE}\left[ \cW_1(T_0(\mu’_{k-1},Z), \triangle)\right]\ind_{\{U\in [\phi(\mu_{k-1}),\phi'(\mu'_{k-1})]\}}\\
     \leq &L_4\check{\mE}\cW_1(\mu_{k-1},\mu_{k-1}')+2C\check{\mE}|\phi(\mu_{k-1})-\phi'(\mu'_{k-1})| \\
     \leq &L_4\check{\mE}\cW_1(\mu_{k-1},\mu_{k-1}')+2C\check{\mE}|\phi(\mu_{k-1})-\phi(\mu'_{k-1})|+2C\check{\mE}|\phi(\mu'_{k-1})-\phi'(\mu'_{k-1})|\\
     \leq &(L_4+2CK)\check{\mE}\cW_1(\mu_{k-1},\mu_{k-1}')+2C\|\phi-\phi'\|_{\infty}.
\end{aligned}
\end{equation*}
Therefore, it holds that
\[
\overline{\cW}_1(\mP^{\mu,\phi}_k,\mP^{\mu,\phi'}_k)\leq(L_4+2CK)\overline{\cW_1}(\mP^{\mu,\phi}_{k-1},\mP^{\mu,\phi'}_{k-1})+2C\|\phi-\phi'\|_{\infty}.
\]
Using induction and the fact $\cW_1(\mu_0,\mu'_0)=0$, we obtain
\[
\overline{\cW}_1(\mP^{\mu,\phi}_k,\mP^{\mu,\phi'}_k)\leq\frac{(L_4+2CK)^k-1}{L_4+2CK-1}2C\|\phi-\phi'\|_{\infty}.
\]
We can choose $K_3$ large enough so that \eqref{W1estphi} holds true, and it is clear to see that $K_3$ only depends on $\lambda$ and $K$ (the Lipschitz constant of $\phi$ and $\phi'$).
\end{proof}

Finally, we provide a convergence result with vanishing regularization parameter, i.e., with $\lambda\to 0$.
\begin{proposition}\label{lambdavanishing}
    As $\lambda\to 0$, we have
    \[
    \lambda \sum_{k=0}^{\infty}\left( \frac{1}{1+\lambda}   \right)^{k^2}\to 0.
    \]
\end{proposition}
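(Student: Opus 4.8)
The plan is to recognize this series as a theta-type sum and to pin down its precise divergence rate as $\lambda\to 0$. Writing $q:=\frac{1}{1+\lambda}\in(0,1)$ and $a:=-\log q=\log(1+\lambda)>0$, the sum becomes $\sum_{k=0}^\infty e^{-ak^2}$, with $a\to 0^+$ as $\lambda\to 0^+$. The first point I would emphasize is why the obvious estimate is not good enough: using only $k^2\ge k$ (so that $q^{k^2}\le q^k$ for $k\ge 1$) gives $\sum_{k\ge 0}q^{k^2}\le \frac{1}{1-q}=\frac{1+\lambda}{\lambda}$, whence $\lambda\sum_k q^{k^2}\le 1+\lambda$, which does \emph{not} tend to $0$. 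One must exploit the quadratic exponent to obtain the sharper order $O(\lambda^{-1/2})$ for the sum.

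To capture this, I would use integral comparison. Since $x\mapsto e^{-ax^2}$ is nonnegative and decreasing on $[0,\infty)$, monotonicity gives
\[
\sum_{k=0}^\infty e^{-ak^2}\le 1+\int_0^\infty e^{-ax^2}\,\md x=1+\frac{1}{2}\sqrt{\frac{\pi}{a}}.
\]
It then remains only to bound $a=\log(1+\lambda)$ from below. For $\lambda\in(0,1]$ one checks $\log(1+\lambda)\ge \lambda/2$ (the difference vanishes at $0$ and is increasing there), so $\sqrt{1/a}\le\sqrt{2/\lambda}$. Multiplying the displayed estimate by $\lambda$ yields
\[
\lambda\sum_{k=0}^\infty\left(\frac{1}{1+\lambda}\right)^{k^2}\le \lambda+\frac{\lambda}{2}\sqrt{\frac{\pi}{\log(1+\lambda)}}\le \lambda+\frac{1}{2}\sqrt{2\pi\lambda},
\]
and the right-hand side tends to $0$ as $\lambda\to 0^+$, which is the claim.

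If one prefers to avoid invoking the exact value of the Gaussian integral, the same $O(\lambda^{-1/2})$ bound can be obtained by splitting the sum at $k\approx\lfloor a^{-1/2}\rfloor$: the initial block contributes $O(a^{-1/2})$ terms each bounded by $1$, while for $k\ge a^{-1/2}$ one has $ak^2\ge\sqrt{a}\,k$, so the tail is dominated by the geometric series $\sum_k e^{-\sqrt{a}\,k}=(1-e^{-\sqrt a})^{-1}=O(a^{-1/2})$. Either way, the only conceptual point — and the single place requiring care — is the passage from the crude geometric estimate to the correct square-root rate; once $\sum_k q^{k^2}=O(\lambda^{-1/2})$ is established, multiplying by $\lambda$ closes the argument at once. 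I do not expect any genuine obstacle beyond making this order estimate precise.
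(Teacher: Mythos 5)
Your proof is correct, and your main argument takes a genuinely different route from the paper's. You bound $\sum_{k\ge 0}e^{-ak^2}$ (with $a=\log(1+\lambda)$) by integral comparison against the Gaussian integral, obtaining $1+\tfrac12\sqrt{\pi/a}$, and then convert this into an explicit $O(\sqrt{\lambda})$ bound via the elementary inequality $\log(1+\lambda)\ge \lambda/2$ on $(0,1]$; every step checks out, including your (worthwhile) preliminary remark that the crude estimate $k^2\ge k$ only yields $\lambda\sum_k q^{k^2}\le 1+\lambda$ and hence cannot close the argument. The paper instead splits the sum at a $\lambda$-dependent index $N$, bounds the tail $\sum_{k>N}q^{k^2}\le\sum_{k>N}(q^{N+1})^k$ by a geometric series, and then chooses $N$ so that $(1+\lambda)^{N+1}=1+\sqrt{\lambda}$, using $\log x\ge 1-1/x$ to control the resulting expression; this is more elementary in that it never invokes the value of $\int_0^\infty e^{-ax^2}\,\md x$, at the cost of a slightly less transparent choice of splitting point. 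Both approaches isolate the same phenomenon — the quadratic exponent forces the sum to grow only like $\lambda^{-1/2}$ rather than $\lambda^{-1}$ — and indeed the alternative you sketch in your final paragraph (split at $k\approx a^{-1/2}$ and dominate the tail via $ak^2\ge\sqrt{a}\,k$ by a geometric series) is essentially the paper's argument, so you have in effect recorded both proofs.
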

\begin{proof}
    For any $N\in \mN$ and {  $\lambda>0$}, it holds that
    \begin{align}
    \sum_{k=0}^\infty\left( \frac{1}{1+\lambda}   \right)^{k^2}&\leq \sum_{k=0}^N\left( \frac{1}{1+\lambda}   \right)^{k^2}+\sum_{k=N+1}^\infty\left( \frac{1}{1+\lambda}   \right)^{k^2}\\
    &\leq N+\sum_{k=N+1}^\infty \left[\left(\frac{1}{1+\lambda}   \right)^{N+1}    \right]^k\\
    &=N+\frac{1}{(1+\lambda)^{N(N+1)}((1+\lambda)^{N+1}-1)}\\
    &\leq N+\frac{1}{((1+\lambda)^{N+1}-1)}.\label{lambda-N-bound}
    \end{align}
{   Fix an arbitrary $\lambda>0$ and take $N=\frac{\log(1+\sqrt{\lambda})}{\log (1+\lambda)}-1$ in \eqref{lambda-N-bound}, i.e., $(1+\lambda)^{N+1}=1+\sqrt{\lambda}$. Then,
\begin{align*}
    \sum_{k=0}^\infty\left( \frac{1}{1+\lambda}   \right)^{k^2}&\leq  \frac{\log(1+\sqrt{\lambda})}{\log (1+\lambda)}-1+\frac{1}{\sqrt{\lambda}}\\
    &\leq \frac{(1+\lambda)\log (1+\sqrt{\lambda})}{\lambda}+\frac{1}{\sqrt{\lambda}}\end{align*}
where we have used the elementary inequality $\log (x)\geq 1-\frac{1}{x}$ for any $x>0$. Consequently, for any $\lambda>0$, we have 
\begin{align}
	\lambda \sum_{k=0}^\infty\left( \frac{1}{1+\lambda}   \right)^{k^2} \leq (1+\lambda)\log(1+\sqrt{\lambda})+\sqrt{\lambda}.\label{lambda-bound}
\end{align}
Letting $\lambda\to 0$ in \eqref{lambda-bound} completes the proof.
}
\end{proof}

\subsection{Existence of Regularized Equilibria}\label{existregular}
To establish the existence of fixed points of $\Ti^\lambda:= \Ti^\lambda_2\circ\Ti^\lambda_1:C(\barS)\to C(\barS)$, we resort to Schauder's fixed point theorem. For ease of presentation, let us first take it for granted that $T^{\lambda}(v)$ is continuous if $v$ is, which will be rigorously verified later. 

Let us first show that $\Ti^\lambda$ is uniformly bounded.
\begin{lemma}\label{Tibound}
    There exists a constant $R>0$ such that $\|\Ti^\lambda(v)\|_{\infty}\leq R$, for any $\lambda\in (0,1]$ and $v\in C(\barS)$.
\end{lemma}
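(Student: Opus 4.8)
The plan is to set $\phi := \Ti^\lambda_1(v)\in\F_S$ and to bound $\Ti^\lambda(v)(\mu)=\Ti^\lambda_2(\phi)(\mu)=\tilde J_\lambda^\phi(\mu)$ by splitting the summand in \eqref{axvalueregular} into a reward part and an entropy part and estimating the two parts by genuinely different mechanisms. The naive bound $\sum_k\delta_\lambda(k+1)\le\sum_k(\frac{1}{1+\lambda})^{(k+1)^2}$ explodes as $\lambda\to0$, so the reward part cannot be controlled term by term; the point is that the reward part is tamed by the stopping structure, while the entropy part is tamed by its $\lambda$ prefactor together with Proposition \ref{lambdavanishing}.

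First I would treat the reward part $\sum_{k=0}^\infty\delta_\lambda(k+1)\mE^{\mu,\phi}r(\mu_k)\phi(\mu_k)$. Applying Lemma \ref{formulationclassical} to the stationary policy $\phi$ gives $\mE^{\mu,\phi}r(\mu_k)\phi(\mu_k)=\tilde\mE^\mu r(\mu_k)\phi(\mu_k)\prod_{j=0}^{k-1}(1-\phi(\mu_j))$. Using $|r|\le L_1$ (Assumption \ref{as1} with compactness of $\barS$), $0\le\delta_\lambda(k+1)\le1$, and interchanging the nonnegative sum with $\tilde\mE^\mu$ by Tonelli, this part is dominated in absolute value by $L_1\,\tilde\mE^\mu\sum_{k=0}^\infty\phi(\mu_k)\prod_{j=0}^{k-1}(1-\phi(\mu_j))$. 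The inner pathwise sum telescopes to $1-\prod_{j=0}^\infty(1-\phi(\mu_j))\le1$, which is precisely the sub-probability bound already used in the proof of Theorem \ref{epsiloneq-MF}. Hence the reward part is bounded by $L_1$, uniformly in $\mu$, $v$ and $\lambda$.

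Next I would treat the entropy part $\lambda\sum_{k=0}^\infty\delta_\lambda(k+1)\mE^{\mu,\phi}\E(\phi(\mu_k))$. Here the sub-probability trick is unavailable since $\E(\phi)$ carries no factor $\phi$; instead I use that the Shannon entropy satisfies $0\le\E(\phi)\le\log2$ for every $\phi\in[0,1]$, together with $\delta_\lambda(k+1)\le(\frac{1}{1+\lambda})^{(k+1)^2}$ (from $\delta\le1$). This bounds the part by $\lambda\log2\sum_{k=0}^\infty(\frac{1}{1+\lambda})^{(k+1)^2}\le\lambda\log2\sum_{j=0}^\infty(\frac{1}{1+\lambda})^{j^2}$. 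By the estimate \eqref{lambda-bound} obtained inside Proposition \ref{lambdavanishing}, $\lambda\sum_{j=0}^\infty(\frac{1}{1+\lambda})^{j^2}\le(1+\lambda)\log(1+\sqrt\lambda)+\sqrt\lambda$, whose right-hand side is continuous on $(0,1]$ with finite limit $0$ as $\lambda\to0$ and a finite value at $\lambda=1$, hence bounded on $(0,1]$ by some constant $B$. Thus the entropy part is at most $B\log2$.

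Combining the two estimates yields $\|\Ti^\lambda(v)\|_\infty\le L_1+B\log2=:R$, independent of $\lambda\in(0,1]$ and $v\in C(\barS)$. The main obstacle is conceptual rather than computational: one has to recognize that the reward and the entropy contributions require different tools — the stopping identity of Lemma \ref{formulationclassical} with the telescoping sub-probability bound for the reward, and the vanishing-regularization estimate of Proposition \ref{lambdavanishing} for the entropy — because neither device alone keeps the full expression bounded uniformly as $\lambda\to0$.
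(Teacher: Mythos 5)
Your proof is correct and takes essentially the same approach as the paper's: the paper likewise splits $\tilde J_\lambda^{\phi}(\mu)$ into the reward part, bounded by $L_1$ via Lemma \ref{formulationclassical} and the telescoping sub-probability bound, and the entropy part, bounded by $(\log 2)\,\lambda\sum_{k}\delta_\lambda(k+1)$ using the estimate from the proof of Proposition \ref{lambdavanishing}. The only cosmetic difference is that the paper records the uniform constant directly as $L_0 = 2\log 2 + 1$ (the value of your bound $B$ at $\lambda = 1$), giving $R = L_1 + L_0\log 2$, which matches your $R = L_1 + B\log 2$.
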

\begin{proof}
    From the proof of Lemma \ref{lambdavanishing}, it holds that when $\lambda\leq 1$,
\[
    \lambda\sum_{k=0}^\infty\delta_\lambda(k+1)\leq 2\log 2+1=: L_0.
\]
Choose any $v\in C(\barS)$ and denote $\phi^\lambda=\Ti^\lambda_1(v)$. By the definition of $\Ti^\lambda$ and Lemma \ref{formulationclassical}, we obtain that
\begin{align*}
    |\Ti^\lambda(v)(\mu)|&\leq \left|\sum_{k=0}^\infty \delta_\lambda(k+1)\mE^{\mu,\phi_\lambda}\left[r(\mu_k)\phi_\lambda(\mu_k)-\lambda\E(\phi(\mu_k))    \right]\right|\\
    &\leq \sum_{k=0}^\infty \delta_\lambda(k+1)|\mE^{\mu,\phi^\lambda}r(\mu_k)\phi^\lambda(\mu_k)|  +(\log 2)\lambda\sum_{k=0}^\infty \delta_\lambda(k+1)\\
    &\leq L_1 \sum_{k=0}^\infty \tilde{\mE}^\mu \phi^\lambda(\mu_k)\prod_{j=0}^{k-1}(1-\phi^\lambda(\mu_j)) +L_0\log 2 \\
    &\leq L_1+L_0\log 2
\end{align*}
Choosing $R:=L_1+L_0\log 2$, we complete the proof.
\end{proof}
As a consequence of Lemma \ref{Tibound}, $\Ti^\lambda$ is invariant in $B_R$. We emphasize that $R$ is a universal constant in this paper, which is independent of $\lambda$. 

The next lemma provides Lipschitz properties of $\Ti^{\lambda}(v)$ for any $v\in C(\barS)$, whose proof crucially relies on the proper regularization of the discount function $\delta_\lambda$.

\begin{lemma}\label{TLipLemma}
    There exists a $K_4=K_4(\lambda)$, such that for any $v\in B_R$, $\Ti^{\lambda}(v)\in L_{K_4}$.
\end{lemma}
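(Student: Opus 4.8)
The plan is to write $\Ti^\lambda(v)=\Ti^\lambda_2(\phi^\lambda)=\tilde J_\lambda^{\phi^\lambda}$ with $\phi^\lambda:=\Ti^\lambda_1(v)$, and to exploit its representation as a discounted sum of integrals of a fixed ``running reward'' against the one-step marginals. Concretely, set $g^\lambda(\nu):=r(\nu)\phi^\lambda(\nu)+\lambda\E(\phi^\lambda(\nu))$, so that, recalling $\mP^{\mu,\phi^\lambda}_k=(\mu_k)_\#\mP^{\mu,\phi^\lambda}$ from Lemma \ref{W1estLemma},
\begin{equation*}
\Ti^\lambda(v)(\mu)=\sum_{k=0}^\infty\delta_\lambda(k+1)\int_{\barS}g^\lambda(\nu)\,\mP^{\mu,\phi^\lambda}_k(\md\nu).
\end{equation*}
The difference $|\Ti^\lambda(v)(\mu)-\Ti^\lambda(v)(\mu')|$ then splits termwise into $\delta_\lambda(k+1)\big|\int g^\lambda\md\mP^{\mu,\phi^\lambda}_k-\int g^\lambda\md\mP^{\mu',\phi^\lambda}_k\big|$, which reduces the problem to two ingredients: (a) Lipschitz continuity of $g^\lambda$ on $(\barS,\cW_1)$, and (b) the Wasserstein stability of the marginals in the initial condition, already supplied by Lemma \ref{W1estLemma}. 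Throughout, since $v\in B_R$ we have $\|v\|_\infty\le R$, so the constants $K_1=K_1(\lambda,\|v\|_\infty)$ and $K_2=L_4+2CK_1$ from Lemmas \ref{phiestuniformlemma} and \ref{W1estLemma} can be bounded by quantities $K_1(\lambda),K_2(\lambda)$ depending only on $\lambda$.

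First I would establish that $g^\lambda\in L_{L_{g^\lambda}}$ for a constant $L_{g^\lambda}=L_{g^\lambda}(\lambda)$. The term $r\phi^\lambda$ is handled directly from Assumption \ref{as1} and \eqref{phiestuniform}--\eqref{phiestLip}: using $|r|\le L_1$, $r\in L_{L_2}$, $\phi^\lambda\le 1$ and $\phi^\lambda\in L_{K_1}$, one gets $\|r\phi^\lambda\|_{\rm Lip}\le L_2+L_1K_1$. The entropy term is the delicate one, because $\E'(\phi)=\log\frac{1-\phi}{\phi}$ blows up as $\phi\to0$ or $\phi\to1$. Here I would sharpen the lower bound in \eqref{phiestuniform}: from the explicit form \eqref{T1def}, $\phi^\lambda(\mu)=\big(1+\exp(\tfrac1\lambda[\mE^0v(T_0(\mu,Z^0))-r(\mu)])\big)^{-1}$ with $|\mE^0v-r|\le R+L_1$, so $\phi^\lambda$ takes values in a compact subinterval $[\underline p,\bar p]\subset(0,1)$ whose endpoints depend only on $\lambda$ (and the universal $R,L_1$). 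On $[\underline p,\bar p]$ the map $\E$ is Lipschitz with constant $L_\E=L_\E(\lambda)=\sup_{p\in[\underline p,\bar p]}|\E'(p)|<\infty$, whence $\E(\phi^\lambda(\cdot))\in L_{L_\E K_1}$. Combining, $L_{g^\lambda}=L_2+L_1K_1+\lambda L_\E K_1$ depends only on $\lambda$.

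For ingredient (b) I would invoke the Kantorovich--Rubinstein duality: since $\overline{\cW}_1$ is the $1$-Wasserstein metric on $\cP(\barS)$ and $g^\lambda$ is $L_{g^\lambda}$-Lipschitz on $(\barS,\cW_1)$,
\begin{equation*}
\left|\int g^\lambda\md\mP^{\mu,\phi^\lambda}_k-\int g^\lambda\md\mP^{\mu',\phi^\lambda}_k\right|\le L_{g^\lambda}\,\overline{\cW}_1(\mP^{\mu,\phi^\lambda}_k,\mP^{\mu',\phi^\lambda}_k)\le L_{g^\lambda}K_2^k\,\cW_1(\mu,\mu'),
\end{equation*}
the last step using Lemma \ref{W1estLemma}. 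Summing over $k$ gives $|\Ti^\lambda(v)(\mu)-\Ti^\lambda(v)(\mu')|\le K_4\,\cW_1(\mu,\mu')$ with $K_4:=L_{g^\lambda}\sum_{k=0}^\infty\delta_\lambda(k+1)K_2^k$. The main obstacle — and the reason the discount regularization \eqref{discountreg} is indispensable — is the finiteness of this series: because $K_2=L_4+2CK_1$ may well exceed $1$, the naive sum $\sum_k\delta(k+1)K_2^k$ need not converge for a general discount $\delta$. However, $\delta_\lambda(k+1)\le\big(\tfrac1{1+\lambda}\big)^{(k+1)^2}$, so the super-geometric (Gaussian-type) decay dominates the geometric growth $K_2^k$, and $\sum_{k=0}^\infty\big(\tfrac1{1+\lambda}\big)^{(k+1)^2}K_2^k<\infty$ for every fixed $\lambda>0$ (e.g.\ by a comparison/root-test argument, splitting at a finite threshold as in the proof of Proposition \ref{lambdavanishing}). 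This yields $K_4=K_4(\lambda)<\infty$, completing the proof; I would remark that $K_4$ may blow up as $\lambda\to0$, consistent with the later analysis of the vanishing-regularization limit.
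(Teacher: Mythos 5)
Your proposal is correct and follows essentially the same route as the paper's proof: establish Lipschitz continuity of the running reward $\mu\mapsto r(\mu)\phi^\lambda(\mu)\pm\lambda\E(\phi^\lambda(\mu))$ (the paper handles the entropy term by bounding $|\log\phi^\lambda|$ via \eqref{phiestuniform}, while you equivalently confine $\phi^\lambda$ to a compact subinterval of $(0,1)$ via \eqref{T1def}), then apply Kantorovich duality with Lemma \ref{W1estLemma} to each marginal $\mP^{\mu,\phi^\lambda}_k$, and finally sum the series $\sum_k\delta_\lambda(k+1)K_2^k$, whose convergence rests exactly on the super-geometric decay built into the discount regularization \eqref{discountreg}. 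No gaps; your treatment of the entropy term and of the series convergence is, if anything, slightly more explicit than the paper's.
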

\begin{proof}
    Fix a $v\in C(\barS)$ and denote $\phi=\Ti^{\lambda}_1(v)$. Let us consider a function on $\barS$ that 
    \[
    f(\mu):=r(\mu)\phi(\mu)-\lambda \E(\phi(\mu)).
    \]
    By Lemma \ref{phiestuniformlemma} (the lower bound), there exists a $\tilde{K}'$ such that $\sup_{\mu\in \barS}|\log \phi(\mu)|\leq \tilde{K}'$. Consequently, for $\mu,\mu'\in \barS$, it holds that
    \begin{align*}
    |\E(\phi(\mu))-\E(\phi(\mu'))|
    \leq& |\phi(\mu)-\phi(\mu')|\times\\
    &\int_0^1(2+|\log(\alpha\phi(\mu)+(1-\alpha)\phi(\mu'))|+|\log(1-\alpha\phi(\mu)-(1-\alpha)\phi(\mu'))|)\md \alpha\\
    \leq& 2(\tilde{K}'+1)|\phi(\mu)-\phi(\mu')|\\
    \leq& 2K_1(\tilde{K}'+1) \cW_1(\mu,\mu').
    \end{align*}
Therefore, we have
\[
    |f(\mu)-f(\mu')|\leq (L_1K_1+L_2+2\lambda K_1(\tilde{K}'+1) )\cW(\mu,\mu').
\]
That is, $f\in L_{\tilde{K}''}$ for some $\tilde{K}''>0$. Recall that 
\begin{align*}
     \Ti^{\lambda}_2(\phi)(\mu)&=\sum_{k=0}^{\infty}\delta_\lambda(k+1)\mE^{\mu,\phi}\left[r(\mu_k)\phi(\mu_k)-\lambda\E(\phi(\mu_k)   \right]\\
     &=\sum_{k=0}^{\infty}\delta_\lambda(k+1) \mE^{\mP^{\mu,\phi}_k}f(\bar{\mu}),
\end{align*}
where $\bar{\mu}$ denotes the canonical random variable on $\barS$. By Kantorovich’s duality and Lemma \ref{W1estLemma}, it holds that
\begin{align*}
 |\Ti^{\lambda}_2(\phi)(\mu)-\Ti^{\lambda}_2(\phi)(\mu')|&\leq \sum_{k=0}^{\infty}\delta_\lambda(k+1)\left|\mE^{\mP^{\mu,\phi}_k}f(\bar{\mu})-\mE^{\mP^{\mu',\phi}_k}f(\bar{\mu})   \right|\\
 &\leq \tilde{K}''\sum_{k=0}^{\infty}\delta_\lambda(k+1)\cW_1(\mP^{\mu,\phi}_k,\mP^{\mu',\phi}_k)\\
 &\leq \tilde{K}''\left(\sum_{k=0}^{\infty}\delta_\lambda(k+1)K_2^k\right)\cW(\mu,\mu').
\end{align*}
The conclusion is verified with 
\begin{align*}
K_4&=\tilde{K}''\left(\sum_{k=0}^{\infty}\delta_\lambda(k+1)K_2^k\right)\\
&=\tilde{K}''\left(\sum_{k=0}^{\infty}\delta(k+1)\left(\frac{1}{1+\lambda}\right)^{(k+1)^2}K_2^k\right)<\infty,
\end{align*}
where the series converges because of the fact $\frac{1}{1+\lambda}\in (0,1)$. 
\end{proof}
Lemma \ref{TLipLemma} is a key step towards the existence of fixed points, because it provides the compactness which is needed in Schauder's fixed point theorem.
\begin{corollary}\label{corocompact}
   $\Ti^{\lambda}(B_R)$ is relatively compact in $C(\bar{S})$.
\end{corollary}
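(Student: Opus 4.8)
The plan is to invoke the Arzelà--Ascoli theorem, for which the two preceding lemmas supply exactly the required hypotheses. First I would recall that $\barS=\cP(S)\cup\{\triangle\}$ equipped with $\cW_1$ is a compact metric space, as already observed in Section \ref{exisresults}: since $S$ is compact, $(\cP(S),\cW_1)$ is compact, and adjoining the single point $\triangle$ preserves compactness. This is the ambient space on which Arzelà--Ascoli will be applied, and it is where the uncountability of $\barS$ is harmless because compactness (rather than countability) is what matters.

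Next I would verify the two standing hypotheses of Arzelà--Ascoli for the family $\cG:=\Ti^\lambda(B_R)\subset C(\barS)$. Uniform boundedness is immediate from Lemma \ref{Tibound}: every $g\in\cG$ satisfies $\|g\|_\infty\leq R$, with $R$ independent of $\lambda$. Equicontinuity follows from Lemma \ref{TLipLemma}: every $g=\Ti^\lambda(v)$ with $v\in B_R$ lies in $L_{K_4}$, so that $|g(\mu)-g(\mu')|\leq K_4\,\cW_1(\mu,\mu')$ holds with one and the same constant $K_4=K_4(\lambda)$ for all $g\in\cG$. Hence, given any $\epsilon>0$, the choice $\delta=\epsilon/K_4$ yields $|g(\mu)-g(\mu')|<\epsilon$ whenever $\cW_1(\mu,\mu')<\delta$, uniformly over $g\in\cG$; this is precisely the equicontinuity of the family.

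With uniform boundedness and equicontinuity established on the compact metric space $\barS$, the Arzelà--Ascoli theorem immediately gives that $\cG$ is relatively compact in $(C(\barS),\|\cdot\|_\infty)$, which is the assertion of the corollary.

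There is essentially no obstacle remaining at this step: the genuine work has already been carried out in Lemma \ref{TLipLemma}, whose uniform Lipschitz bound rests on the discount regularization $\delta_\lambda$ keeping the series $\sum_{k}\delta_\lambda(k+1)K_2^k$ summable. The only point meriting a moment's care is that the passage from a Lipschitz bound to equicontinuity must be uniform over the whole image family, and this is guaranteed precisely because $K_4$ depends on $\lambda$ alone and not on the particular $v\in B_R$. Thus the corollary is a direct consequence of Lemmas \ref{Tibound} and \ref{TLipLemma} together with Arzelà--Ascoli.
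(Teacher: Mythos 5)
Your proof is correct and follows essentially the same route as the paper: the paper's proof is exactly an application of the Arzelà--Ascoli theorem on the compact metric space $(\barS,\cW_1)$, using the uniform Lipschitz bound from Lemma \ref{TLipLemma} (your additional explicit appeal to Lemma \ref{Tibound} for uniform boundedness is a detail the paper leaves implicit). Nothing is missing.
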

\begin{proof}
    Because $\bar{S}$ is a compact metric space (equipped with $\cW_1$), this is a direct consequence of Lemma \ref{TLipLemma} and the Azela-Ascoli theorem.
\end{proof}
Finally, it remains to verify the continuity of $\Ti^{\lambda}$. The proof is split into two lemmas.
\begin{lemma}\label{T1continuous}
   For a sequence $\{v_n\}_{n\in \mN}\subset B_R$ and $v\in B_R$, if $\|v-v_n\|_{\infty}\to 0$ as $n\to \infty$, we have $\|\phi_n-\phi\|_{\infty}\to 0$ as $n\to \infty$. Here $\phi_n:=\Ti^{\lambda}_1(v_n)$ and $\phi:=\Ti^{\lambda}_1(v)$.
\end{lemma}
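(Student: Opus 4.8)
The plan is to exploit the explicit logistic form of $\Ti^\lambda_1$ together with the elementary fact that the averaging map $w\mapsto \mE^0 w(T_0(\mu,\cdot))$ is a contraction in the supremum norm \emph{uniformly in} $\mu$. First I would recall from \eqref{T1def} that, writing $\Hi^w_\lambda(\mu):=\frac{1}{\lambda}[\mE^0 w(T_0(\mu,Z^0))-r(\mu)]$ for $w\in C(\barS)$ (the same quantity appearing in the proof of Lemma \ref{phiestuniformlemma}), one has $\Ti^\lambda_1(w)(\mu)=s(\Hi^w_\lambda(\mu))$ with the logistic function $s(x):=(1+e^{x})^{-1}$. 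Hence $\phi=s\circ \Hi^v_\lambda$ and $\phi_n=s\circ \Hi^{v_n}_\lambda$, and the problem reduces to controlling $\Hi^v_\lambda-\Hi^{v_n}_\lambda$ and propagating this through $s$.

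The crux is a uniform bound on the arguments. Since $r(\mu)$ cancels in the difference, for every $\mu\in\barS$,
\[
|\Hi^v_\lambda(\mu)-\Hi^{v_n}_\lambda(\mu)|=\frac{1}{\lambda}\bigl|\mE^0[(v-v_n)(T_0(\mu,Z^0))]\bigr|\leq \frac{1}{\lambda}\|v-v_n\|_{\infty},
\]
where the right-hand side does not depend on $\mu$. This is where the uniformity comes for free: the expectation of a function bounded by its sup norm is again bounded by that sup norm, independently of the transition kernel $T_0(\mu,\cdot)$, so no regularity of $T_0$ in $\mu$ is even needed.

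Finally I would transfer this estimate through $s$, using that $s$ is globally Lipschitz with $|s'(x)|=e^{x}/(1+e^{x})^{2}\leq \tfrac14$ (equivalently, the integral-of-derivative estimate already used in the proof of Lemma \ref{phiestuniformlemma}). Thus for each $\mu$,
\[
|\phi(\mu)-\phi_n(\mu)|=\bigl|s(\Hi^v_\lambda(\mu))-s(\Hi^{v_n}_\lambda(\mu))\bigr|\leq \frac14\,|\Hi^v_\lambda(\mu)-\Hi^{v_n}_\lambda(\mu)|\leq \frac{1}{4\lambda}\|v-v_n\|_{\infty},
\]
and taking the supremum over $\mu\in\barS$ yields $\|\phi-\phi_n\|_{\infty}\leq \frac{1}{4\lambda}\|v-v_n\|_{\infty}\to 0$ as $n\to\infty$, which is the assertion.

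There is no serious obstacle here; the statement is essentially a Lipschitz-continuity fact for the explicit operator $\Ti^\lambda_1$. The only points to watch are that both the contraction bound and the Lipschitz constant of $s$ are independent of $\mu$ and of $n$, so that the convergence is genuinely in the uniform norm rather than pointwise. I would also note that $\lambda>0$ is fixed throughout this lemma, so the prefactor $1/(4\lambda)$, although it degenerates as $\lambda\to 0$, is harmless for this fixed-$\lambda$ continuity result (the delicate $\lambda\to 0$ analysis is deferred to the convergence results toward the relaxed equilibrium).
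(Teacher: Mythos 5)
Your proof is correct and follows essentially the same route as the paper: both arguments reduce the claim to the uniform bound $\sup_{\mu}|\Hi^{v_n}_\lambda(\mu)-\Hi^{v}_\lambda(\mu)|\leq \frac{1}{\lambda}\|v-v_n\|_{\infty}$ and then propagate it through the logistic map. The only difference is the last step: the paper invokes the mean-value computation from Lemma \ref{phiestuniformlemma}, which requires the a priori bound $\sup_{\mu}|\Hi^{u}_\lambda(\mu)|\leq \frac{1}{\lambda}(L_1+R)$ (this is where $v,v_n\in B_R$ enters) and produces the constant $\frac{1}{\lambda}\exp\left(\frac{1}{\lambda}(L_1+R)\right)$, whereas your use of the global $\tfrac14$-Lipschitz property of $s(x)=(1+e^{x})^{-1}$ yields the sharper constant $\frac{1}{4\lambda}$ and renders the hypothesis $v,v_n\in B_R$ superfluous for this particular lemma.
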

\begin{proof}
    We recall the notation $\Hi^v_\lambda(\mu):= \frac{1}{\lambda}[r(\mu)-\mE^0v(T(\mu,a,Z^0))]$. It is clear that for $u=v,v_n$, $n\in \mN$,
    \begin{equation}\label{Hbound}
    \sup_{\mu}|\Hi^{u}_\lambda(\mu)|\leq \frac{1}{\lambda}(L_1+R),
    \end{equation}
    and 
    \begin{equation}
     \sup_{\mu}|\Hi^{v_n}_\lambda(\mu)-\Hi_\lambda^v(\mu)|\leq \frac{1}{\lambda}\|v-v_n\|_{\infty}.   
    \end{equation}
    Using similar computations as in the proof of Lemma \ref{phiestuniformlemma}, we conclude that 
    \[
        |\phi_n(\mu)-\phi(\mu)|\leq \frac{1}{\lambda}\exp\left(\frac{1}{\lambda}(L_1+R)\right)\|v-v_n\|_{\infty},
    \]
which completes the proof with the fixed $\lambda>0$.
\end{proof}
\begin{lemma}\label{Tcontinuous}
    For $\phi_n:=\Ti^\lambda_1(v_n)$, $\phi:=\Ti^\lambda_1(v)$ with $v_n,v\in B_R$ and $\|v_n-v\|_{\infty}\to 0$ as $n\to \infty$, we have $\|\Ti^{\lambda}_2(\phi_n)-\Ti^{\lambda}_2(\phi)\|_{\infty}\to 0$ as $n\to \infty$.
\end{lemma}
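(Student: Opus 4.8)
The plan is to exploit the representation already used in the proof of Lemma~\ref{TLipLemma}: writing $f_\phi(\mu):= r(\mu)\phi(\mu)-\lambda\E(\phi(\mu))$, one has for $\phi=\Ti^\lambda_1(v)$
\[
\Ti^\lambda_2(\phi)(\mu)=\sum_{k=0}^\infty \delta_\lambda(k+1)\,\mE^{\mP^{\mu,\phi}_k}f_\phi(\bar\mu),
\]
where $\bar\mu$ is the canonical variable on $\barS$. I then decompose the difference termwise into a part coming from the change of integrand $f_{\phi_n}-f_\phi$ and a part coming from the change of the marginal law. Concretely, inside each summand I write
\[
\mE^{\mP^{\mu,\phi_n}_k}f_{\phi_n}-\mE^{\mP^{\mu,\phi}_k}f_\phi
=\mE^{\mP^{\mu,\phi_n}_k}\!\big[f_{\phi_n}-f_\phi\big]
+\Big(\mE^{\mP^{\mu,\phi_n}_k}f_\phi-\mE^{\mP^{\mu,\phi}_k}f_\phi\Big),
\]
bound each piece uniformly in $\mu$, and sum against $\delta_\lambda(k+1)$ at the end.

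For the first piece I need $\|f_{\phi_n}-f_\phi\|_\infty\le C(\lambda,R)\|\phi_n-\phi\|_\infty$. Since $v_n,v\in B_R$, the explicit formula \eqref{T1def} together with $|r|\le L_1$ and $|\mE^0 v|\le R$ shows that $\phi_n=\Ti^\lambda_1(v_n)$ and $\phi$ take values in a fixed compact subinterval of $(0,1)$ (bounded away from $0$ by \eqref{phiestuniform} and away from $1$ by the symmetric bound from \eqref{T1def}), with endpoints depending only on $\lambda,R,L_1$. On such a subinterval the Shannon entropy $\E$ is Lipschitz, so combining this with $|r|\le L_1$ gives the claimed bound. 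As $\sum_k\delta_\lambda(k+1)<\infty$, this piece contributes at most $C(\lambda,R)\big(\sum_k\delta_\lambda(k+1)\big)\|\phi_n-\phi\|_\infty$.

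For the second piece I use that, by \eqref{phiestLip} and $\|v_n\|_\infty\le R$, all the $\phi_n$ and $\phi$ share a common Lipschitz constant $K:=K_1(\lambda,R)$, so Lemma~\ref{W1estphiLemma} applies and gives $\overline{\cW}_1(\mP^{\mu,\phi_n}_k,\mP^{\mu,\phi}_k)\le K_3^k\|\phi_n-\phi\|_\infty$ with $K_3=K_3(\lambda,K)$. Combining this with Kantorovich duality and the uniform Lipschitz bound $\|f_\phi\|_{\rm Lip}\le \tilde K''(\lambda,R)$ from the proof of Lemma~\ref{TLipLemma} yields a bound $\tilde K''K_3^k\|\phi_n-\phi\|_\infty$ for each summand. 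The crucial point, and the one I expect to be the main obstacle, is summability of $\sum_k\delta_\lambda(k+1)K_3^k$: this is exactly where the discount regularization $\delta_\lambda(k)=\delta(k)(1+\lambda)^{-k^2}$ is indispensable, since for each fixed $\lambda>0$ the super-exponential factor $(1+\lambda)^{-(k+1)^2}$ dominates the geometric growth $K_3^k$ and forces the series to converge.

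Putting the two estimates together yields
\[
\|\Ti^\lambda_2(\phi_n)-\Ti^\lambda_2(\phi)\|_\infty\le C'(\lambda,R)\,\|\phi_n-\phi\|_\infty,
\]
and since $\|\phi_n-\phi\|_\infty\to 0$ by Lemma~\ref{T1continuous}, the conclusion follows. In short, continuity of $\Ti^\lambda_2\circ\Ti^\lambda_1$ reduces to the continuity of $\Ti^\lambda_1$ (already established) together with Lipschitz stability of the value in the policy, which the transition estimates of Lemma~\ref{W1estphiLemma} and the fast-decaying discount supply.
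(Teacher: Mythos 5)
Your proof is correct and follows essentially the same route as the paper's: the same termwise decomposition into an integrand-change part and a measure-change part, the same use of Kantorovich duality with the uniform Lipschitz bound from Lemma \ref{TLipLemma}, the same application of Lemma \ref{W1estphiLemma}, and the same observation that $\sum_k \delta_\lambda(k+1)K_3^k<\infty$ thanks to the regularized discount, concluding via Lemma \ref{T1continuous}. The only cosmetic difference is that you bound $\|f_{\phi_n}-f_\phi\|_\infty$ by invoking Lipschitz continuity of the entropy on a compact subinterval of $(0,1)$ (using that $\phi_n,\phi$ are uniformly bounded away from $0$ and $1$), whereas the paper writes out the integral representation of the entropy difference with uniformly bounded logarithms --- these are the same estimate.
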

\begin{proof}
    For $\psi=\phi_n$ and $\phi$, it follows from definition that 
    \[
    \Ti^{\lambda}_2(\psi)(\mu)=\tilde{J}^\phi_\lambda(\mu)=\sum_{k=0}^\infty\delta_\lambda(k+1)\mE^{\mu,\psi}\cJ(\mu_k,\psi),
    \]
    where
    \[
    \cJ(\mu,\psi):= r(\mu)\psi(\mu)-\lambda\E(\psi(\mu)).
    \]
    From the proof of Lemma \ref{TLipLemma} (see the estimation of $f$ therein), we know that $\mu\mapsto\cJ(\mu,\phi_n)$ is Lipschitz with a Lipschitz constant $\tilde{K}''$ that is independent of $n$. Moreover, for any $\mu\in \barS$, it holds that
    \begin{align*}
    |\cJ(\mu,\phi_n)-\cJ(\mu,\phi)|\leq& |r(\mu)||\phi(\mu)-\phi_n(\mu)|+\lambda|\E(\phi(\mu))-\E(\phi_n(\mu))|\\
    \leq& L_1\|\phi_n-\phi\|_{\infty}\\
    &+\lambda|\phi_n(\mu)-\phi(\mu)|\int_0^1(1+|\log (\alpha \phi_n(\mu)+(1-\alpha)\phi(\mu))|)\md \alpha\\
    &+\lambda|\phi_n(\mu)-\phi(\mu)|\int_0^1(1+|\log (1-\alpha \phi_n(\mu)-(1-\alpha)\phi(\mu))|)\md \alpha\\
    \leq& K_5\|\phi_n-\phi\|_{\infty},
    \end{align*}
    where $K_5=K_5(\lambda,R)$ is a constant that only depends on $\lambda$ and $R$. Here, we use Lemma \ref{phiestuniformlemma} to conclude that $|\log (\alpha \phi_n(\mu,a)+(1-\alpha)\phi(\mu,a))|$ and $|\log (1-\alpha \phi_n(\mu)-(1-\alpha)\phi(\mu))|$ are uniformly bounded in $n$. Therefore, we obtain that
    \begin{align*}
        |\Ti^{\lambda}_2(\phi)(\mu)-\Ti^{\lambda}_2(\phi_n)(\mu)|=&\sum_{k=0}^\infty\delta_\lambda(k+1)|\mE^{\mu,\phi_n}\cJ(\mu_k,\phi_n)-\mE^{\mu,\phi}\cJ(\mu_k,\phi)|\\
        \leq& \sum_{k=0}^\infty\delta_\lambda(k+1)[|\mE^{\mu,\phi_n}\cJ(\mu_k,\phi)-\mE^{\mu,\phi}\cJ(\mu_k,\phi)|\\
        &+\mE^{\mu,\phi_n}|\cJ(\mu_k,\phi_n)-\cJ(\mu_k,\phi)|]\\
        \leq& \sum_{k=0}^\infty\delta_\lambda(k+1)\tilde{K}''\overline{\cW_1}(\mP^{\mu,\phi_n}_k,\mP^{\mu,\phi}_k)\\
        &+K_5\|\phi-\phi_n\|_{\infty}\\
        \leq& \left(K_5+  \tilde{K}''\sum_{k=0}^\infty\delta_\lambda(k+1)K_3^k \right)\|\phi_n-\phi\|_{\infty},
    \end{align*}
where we have used Lemma \ref{W1estphiLemma}. Note that the constant on the right hand side is independent of $\mu$ and $n$, in view of Lemma \ref{T1continuous}, we can conclude the proof.
\end{proof}

\begin{proof}[Proof of Theorem \ref{fixedpoint}]
    By Lemma \ref{Tcontinuous}, $\Ti^{\lambda}$ is continuous from $C(\barS)$ to itself. Clearly, it maps $B_R$ into itself, and by Corollary \ref{corocompact} it maps $B_R$ to a relatively compact subset of $B_R$. Therefore, it follows from Schauder's theorem that $\Ti^{\lambda}$ has a fixed point, denoted by $\phi_\lambda$. From \eqref{phiestLip} we deduce that $\phi_\lambda\in \F^{\rm Lip}_S$.
\end{proof}

\subsection{Existence of Relaxed Equilibria}\label{proof:existoriginal}
 In this subsection, we aim to discuss the limit as $\lambda\to 0$, which gives the existence of the relaxed equilibria of the original problem without the entropy regularization. We stress that the analysis is highly technical because all estimations in the previous subsections, such as the constants $K_i$, $i=1,\cdots,5$, may blow up as $\lambda\to 0$. This challenge is brought by the state space $\barS$ in our mean-field setting, which is inherently uncountably infinite. We use tools in functional analysis to obtain the desired convergence. 

Throughout this subsection, let $v_\lambda$ be a fixed point of $\Ti^\lambda$, whose existence is guaranteed by Theorem \ref{fixedpoint}. We also denote $\phi_\lambda:=\Ti^\lambda_1(v_\lambda)$ for convenience. By definitions of these operators (see Section \ref{exisresults}), we have
\begin{align}
    &\phi_\lambda(\mu)=\frac{1}{1+\exp\left(\frac{1}{\lambda}[\mE^0 v_\lambda(T_0(\mu,Z^0)) -r(\mu)]     \right)},\label{philambdadef}\\
    &v_\lambda(\mu)=\sum_{k=0}^\infty \delta_\lambda(k+1)\mE^{\mu,\phi_\lambda}\left[r(\mu_k)\phi_\lambda(\mu_k)-\lambda\E(\phi_\lambda(\mu_k))    \right].\label{vlambdadef}
\end{align}

The next result provides the candidate limit points of $\phi_\lambda$ and $v_\lambda$ as $\lambda\to 0$. It is pivotal for our later convergence analysis. We recall that $T^0$ is a measure on $\cP(S)$ under Assumption \ref{dominate}, and it can be naturally extended to $\barS$ by setting $T^0(\{\triangle\})=0$.
\begin{lemma}\label{weakstarconvergence}
    There exist $\phi_0,v_0\in L^\infty(\barS,T^0)$, such that for some sequence $\lambda_n\to 0$, we have $\phi_{\lambda_n}\weakstar \phi_0$, $v_{\lambda_n}\weakstar v_0$, both in the weak-$*$ topology $\sigma(L^\infty(\barS,T^0),L^1(\barS,T^0))$ of $L^{\infty}(\barS,T^0)$.
\end{lemma}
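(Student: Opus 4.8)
The plan is to recognize the statement as an instance of sequential weak-$*$ compactness, exploiting that $L^\infty(\barS,T^0)$ is the dual of $L^1(\barS,T^0)$; the two ingredients needed are a uniform $L^\infty$ bound on the fixed points and separability of the predual. For the bounds, Lemma \ref{Tibound} gives $\|v_\lambda\|_\infty\leq R$ for every $\lambda\in(0,1]$ with $R$ independent of $\lambda$, while $\phi_\lambda=\Ti^\lambda_1(v_\lambda)$ is a stopping probability, so $\|\phi_\lambda\|_\infty\leq 1$ (see also the upper bound in \eqref{phiestuniform}). Since $\|\cdot\|_{L^\infty(\barS,T^0)}\leq\|\cdot\|_\infty$, both families $\{v_\lambda\}$ and $\{\phi_\lambda\}$ lie in a fixed closed ball of $L^\infty(\barS,T^0)$.

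Second, and this is the key structural observation, the predual $L^1(\barS,T^0)$ is separable. Since $S$ is compact, $(\barS,\cW_1)$ is a compact metric space, so $C(\barS)$ is separable and the Borel $\sigma$-field of $\barS$ is countably generated. As $T^0$ is a Borel probability measure on $\barS$ (extended from $\cP(S)$ by $T^0(\{\triangle\})=0$ under Assumption \ref{dominate}), the density of $C(\barS)$ in $L^1(\barS,T^0)$ together with the separability of $C(\barS)$ shows that $L^1(\barS,T^0)$ is separable.

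Third, I would invoke the sequential Banach--Alaoglu theorem: when the predual of a dual Banach space is separable, closed bounded balls of the dual are weak-$*$ sequentially compact. Applying this along any sequence of parameters tending to $0$ produces a subsequence along which $v_\lambda$ converges weak-$*$ to some $v_0\in L^\infty(\barS,T^0)$; passing to a further subsequence and applying the theorem once more to $\{\phi_\lambda\}$ yields a single sequence $\lambda_n\to 0$ along which simultaneously $v_{\lambda_n}\weakstar v_0$ and $\phi_{\lambda_n}\weakstar\phi_0$ for some $\phi_0\in L^\infty(\barS,T^0)$. The limits inherit the bounds $\|v_0\|_{L^\infty(\barS,T^0)}\leq R$ and $\|\phi_0\|_{L^\infty(\barS,T^0)}\leq 1$.

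The only delicate point — and the reason one cannot merely quote Banach--Alaoglu — is that plain weak-$*$ compactness yields subnets rather than subsequences, whereas the lemma demands convergence along a \emph{sequence} $\lambda_n\to 0$. Separability of $L^1(\barS,T^0)$ is precisely what upgrades subnets to subsequences. Because the ambient state space $\barS=\cP(S)$ is infinite-dimensional, checking this separability structure is the substantive step; it is rescued by the compactness of $S$, which renders $\barS$ a compact metric space under $\cW_1$.
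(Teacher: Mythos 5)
Your proposal is correct and follows essentially the same route as the paper, which likewise derives the result from the uniform bounds $\|\phi_\lambda\|_\infty\leq 1$, $\|v_\lambda\|_\infty\leq R$ (Lemma \ref{Tibound}) together with Alaoglu's theorem and the duality $(L^1)^*=L^\infty$. The only difference is that you make explicit the separability of the predual $L^1(\barS,T^0)$ needed to extract genuine \emph{subsequences} rather than subnets — a point the paper's one-line proof leaves implicit — and your justification of it via compactness of $(\barS,\cW_1)$ is correct.
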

\begin{proof}
    Suppose $\lambda\leq 1$. By Lemma \ref{Tibound}$, \|\phi_\lambda\|_{\infty}\leq 1$ and $\|v_\lambda\|_{\infty}\leq R$. Therefore, the desired conclusion is a direct consequence of Alaoglu's theorem and the duality relation $(L^1)^*=L^\infty$. 
\end{proof}
\begin{remark}\label{phi0value}
    Because $\phi_0$ serves as the candidate equilibrium for the original problem, a natural question is whether we have $\phi_0\in [0,1]$. We claim that this is true in the sense of $T^0$-almost sure. Indeed, by weak-$*$ convergence, as $n\to \infty$
    \[
\int_{\barS}\phi_{\lambda_n}\ind_{\{\phi_0<0  \}}T^0(\md \nu)\to \int_{\barS}\phi_0\ind_{\{\phi_0<0  \}}T^0(\md \nu)\geq 0.
    \]
    The last inequality is due to the fact that $\phi_{\lambda_n}\geq 0$ for every $n\in \mN$. Because $\phi_0\ind_{\{\phi_0<0  \}} \leq 0$, we conclude that $\phi_0\ind_{\{\phi_0<0  \}} = 0$, $T^0$-almost surely. In other words, $\phi_0\geq 0$, $T^0$-almost surely. The fact that $\phi_0\leq 1$, $T^0$-almost surely, can be proved similarly.
\end{remark}

Under Assumption \ref{dominate}, the above weak-$*$ convergence can be improved to uniform convergence if we consider the one-step transition.
\begin{lemma}
    For some subsequence of $\{\lambda_n\}_{n\in \mN}$, which we still denote by $\{\lambda_n\}$ for simplicity, it holds that  
    \begin{align}
       & \sup_{\mu\in \barS}|\mE^0\phi_{\lambda_n}(T_0(\mu,Z^0))-\mE^0\phi_0(T_0(\mu,Z^0))|\to 0,\label{uconvergencephi}\\
       &\sup_{\mu\in \barS}|\mE^0v_{\lambda_n}(T_0(\mu,Z^0))-\mE^0v_0(T_0(\mu,Z^0))|\to 0.\label{uconvergencev}
    \end{align}
\end{lemma}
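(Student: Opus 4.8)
The plan is to represent the one-step transition as an integral against the densities guaranteed by Assumption \ref{dominate}, deduce pointwise (in $\mu$) convergence from the weak-$*$ convergence of Lemma \ref{weakstarconvergence}, and then upgrade this to uniform convergence by exhibiting equicontinuity in $\mu$. Fix $\mu\in \cP(S)$ and recall $T^\mu=T_0(\mu,\cdot)_\#\cL(Z^0)$, which under Assumption \ref{dominate} admits the density $f^0(\mu,\cdot)$ with respect to $T^0$. Hence
\[
\mE^0\phi_{\lambda_n}(T_0(\mu,Z^0))=\int_{\barS}\phi_{\lambda_n}(\nu)\,T^\mu(\md\nu)=\int_{\barS}\phi_{\lambda_n}(\nu)f^0(\mu,\nu)\,T^0(\md\nu).
\]
Since $f^0(\mu,\cdot)\in L^1(\barS,T^0)$ (it integrates to $1$), the weak-$*$ convergence $\phi_{\lambda_n}\weakstar\phi_0$ tested against $f^0(\mu,\cdot)$ yields, for every fixed $\mu\in\cP(S)$,
\[
g_n(\mu):=\mE^0\phi_{\lambda_n}(T_0(\mu,Z^0))\longrightarrow \mE^0\phi_0(T_0(\mu,Z^0))=:g_0(\mu).
\]

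Next I would establish that the family $\{g_n\}$ is equi-Lipschitz on $(\barS,\cW_1)$. For $\mu,\mu'\in\cP(S)$, using $\|\phi_{\lambda_n}\|_\infty\le 1$ and the density-Lipschitz bound \eqref{densityLip},
\[
|g_n(\mu)-g_n(\mu')|\le \int_{\barS}|f^0(\mu,\nu)-f^0(\mu',\nu)|\,T^0(\md\nu)\le L_4\cW_1(\mu,\mu').
\]
To incorporate the stopped state, note $\mE^0\phi_{\lambda_n}(T_0(\triangle,Z^0))=\phi_{\lambda_n}(\triangle)$ because $T_0(\triangle,\cdot)\equiv\triangle$; since $g_n\in[0,1]$ and $\cW_1(\mu,\triangle)=C$, we have $|g_n(\mu)-g_n(\triangle)|\le 1=\tfrac1C\cW_1(\mu,\triangle)$, so the whole family is $(L_4\vee\tfrac1C)$-Lipschitz on the compact space $\barS$, uniformly in $n$. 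Because $T^0(\{\triangle\})=0$ does not pin down the value $\phi_0(\triangle)$, I would pass to a further subsequence (still denoted $\lambda_n$) along which the bounded scalar sequence $\phi_{\lambda_n}(\triangle)$ converges, and define $\phi_0(\triangle)$ to be its limit; then $g_n\to g_0$ holds pointwise on all of $\barS$.

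Finally I would invoke the standard fact that a pointwise-convergent, equicontinuous sequence on a compact metric space converges uniformly: given $\epsilon>0$, equicontinuity furnishes a $\delta$ uniform in $n$, a finite $\delta$-net of $\barS$ reduces uniform control to pointwise convergence at the finitely many net points, and the triangle inequality closes the estimate. This proves \eqref{uconvergencephi}. The bound \eqref{uconvergencev} follows verbatim, replacing $\|\phi_{\lambda_n}\|_\infty\le1$ by $\|v_{\lambda_n}\|_\infty\le R$ from Lemma \ref{Tibound}, which gives Lipschitz constant $R L_4\vee\tfrac{2R}{C}$, and passing to a subsequence so that $v_{\lambda_n}(\triangle)$ converges. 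The crux of the argument---and the only genuinely delicate point---is precisely this passage from the $\mu$-by-$\mu$ pointwise convergence furnished by weak-$*$ compactness to uniformity over the uncountable state space $\barS$; it is the density-Lipschitz estimate \eqref{densityLip} that supplies the equicontinuity making this upgrade possible, which is exactly the reason Assumption \ref{dominate} is imposed.
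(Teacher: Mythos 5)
Your proof is correct and follows essentially the same route as the paper's: pointwise convergence in $\mu$ obtained by testing the weak-$*$ limit of Lemma \ref{weakstarconvergence} against the densities $f^0(\mu,\cdot)\in L^1(\barS,T^0)$, equi-Lipschitz continuity of $\mu\mapsto \mE^0\phi_{\lambda_n}(T_0(\mu,Z^0))$ from \eqref{densityLip}, and a compactness upgrade to uniform convergence (the paper invokes Arzel\`a--Ascoli and then identifies the uniform limit with the pointwise one, while you use the equivalent fact that pointwise convergence plus equicontinuity on a compact metric space is uniform). Your explicit treatment of the isolated stopped state $\triangle$ — noting that $T^0(\{\triangle\})=0$ leaves $\phi_0(\triangle)$ undetermined and defining it as a subsequential limit of $\phi_{\lambda_n}(\triangle)$ — is a point the paper's proof glosses over, and is a welcome extra bit of care.
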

\begin{proof}
    From proofs in Subsection \ref{existregular}, we know that $\phi_\lambda\in C(\barS)$ and $\|\phi_{\lambda}\|_{\infty}\leq 1$. By Assumption \ref{dominate}, we then have
    \[
   |\mE^0\phi_{\lambda}(T_0(\mu,Z^0))-\mE^0\phi_\lambda(T_0(\mu',Z^0))|\leq L_4\cW(\mu,\mu').
    \]
    Consequently, the family $\{\mE^0\phi_{\lambda_n}(\cdot,Z^0)\}_{n\in \mN}\subset C(\barS)$ is uniformly equi-continuous. Because the uniform boundedness is obvious, an application of the Azela-Ascoli theorem yields that for some subsequence $\lambda_n\to 0$ and some $\tilde{\phi}_0\in C(\barS)$, we have
    \begin{equation}\label{uconvergencephitilde}
    \sup_{\mu\in \barS}|\mE^0\phi_{\lambda_n}(T_0(\mu,Z^0))-\tilde{\phi}_0(\mu)|\to 0.
    \end{equation}
    On the other hand, for any fixed $\mu$, by Lemma \ref{weakstarconvergence}, we have that 
    \begin{equation}\label{uconvergencephi0}
    \begin{aligned}
         \mE^0\phi_{\lambda_n} (T_0(\mu,Z^0))=&\int_{\barS} \phi_{\lambda_n}(\nu)f(\mu,\nu)T^0(\md \nu)\\
         &\to  \int_{\barS} \phi_0(\nu)f(\mu,\nu)T^0(\md \nu)\\
         =&\mE^0\phi_0(T_0(\mu,Z^0)),
    \end{aligned}
    \end{equation}
    which results from $f(\mu,\cdot)\in L^1(\barS,T^0)$. Combining \eqref{uconvergencephitilde} and \eqref{uconvergencephi0}, we obtain $\tilde{\phi}_0(\mu)=\mE^0\phi_0(T_0(\mu,Z^0))$ for any $\mu\in \barS$, and hence \eqref{uconvergencephitilde} further implies \eqref{uconvergencephi}. The proof of \eqref{uconvergencev} is similar.
\end{proof}
Finally, we verify that $\phi_0$ is a relaxed equilibrium of the original problem without entropy regularization, and $v_0$ corresponds to its value function. We first need the preparation of the next technical lemmas.
\begin{lemma}\label{equicondphi}
Denote $\tilde{v}_0(\mu):= \mE^0v_0(T_0(\mu,Z^0))$. Then, $\phi_0=0$ on $\barS_-:= \{\mu\in \barS:\tilde{v}_0(\mu)>r(\mu) \}$ and $\phi_0=1$ on $\barS_+:= \{\mu\in \barS;\tilde{v}_0(\mu)<r(\mu)  \}$, $T^0$-almost surely.
\end{lemma}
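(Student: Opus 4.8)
The plan is to exploit the explicit Gibbs representation \eqref{philambdadef} together with the uniform one-step convergence \eqref{uconvergencev}, and then to reconcile the two distinct modes of convergence available for $\phi_{\lambda_n}$, namely the weak-$*$ convergence of Lemma \ref{weakstarconvergence} and a pointwise convergence that I will produce by hand. Throughout I would work along the subsequence $\{\lambda_n\}$ furnished by \eqref{uconvergencev}. Writing $\tilde v_\lambda(\mu):=\mE^0 v_\lambda(T_0(\mu,Z^0))$, the fixed-point relation \eqref{philambdadef} reads
\[
\phi_{\lambda_n}(\mu)=\frac{1}{1+\exp\left(\tfrac{1}{\lambda_n}[\tilde v_{\lambda_n}(\mu)-r(\mu)]\right)}.
\]
Since each $\tilde v_{\lambda_n}$ is continuous and \eqref{uconvergencev} states that $\tilde v_{\lambda_n}\to\tilde v_0$ uniformly on $\barS$, the limit $\tilde v_0$ is continuous; hence $\barS_-$ and $\barS_+$ are open, and in particular Borel measurable, so that the statement is well posed.

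Next I would establish pointwise convergence of $\phi_{\lambda_n}$ on each of the two sets. Fix $\mu\in\barS_-$ and put $c_\mu:=\tilde v_0(\mu)-r(\mu)>0$. By \eqref{uconvergencev} there is an $n_0$ with $\tilde v_{\lambda_n}(\mu)-r(\mu)\geq c_\mu/2>0$ for all $n\geq n_0$, so the exponent $\tfrac{1}{\lambda_n}[\tilde v_{\lambda_n}(\mu)-r(\mu)]\to+\infty$ as $\lambda_n\to 0$, whence $\phi_{\lambda_n}(\mu)\to 0$. The symmetric argument with reversed inequality shows $\phi_{\lambda_n}(\mu)\to 1$ for every $\mu\in\barS_+$. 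This is the only place where the $1/\lambda_n$ factor is used to drive the Gibbs weight to an extreme value.

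Finally I would glue the pointwise limit to the weak-$*$ limit by testing against indicators. Because $T^0$ is a probability measure, $\ind_{\barS_-},\ind_{\barS_+}\in L^1(\barS,T^0)$; and since $0\leq\phi_{\lambda_n}\leq 1$, dominated convergence yields $\int_{\barS_-}\phi_{\lambda_n}\,dT^0\to 0$ and $\int_{\barS_+}\phi_{\lambda_n}\,dT^0\to T^0(\barS_+)$. On the other hand, the weak-$*$ convergence $\phi_{\lambda_n}\weakstar\phi_0$ from Lemma \ref{weakstarconvergence}, tested against the same indicators, gives $\int_{\barS_-}\phi_{\lambda_n}\,dT^0\to\int_{\barS_-}\phi_0\,dT^0$ and the analogous identity over $\barS_+$. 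Comparing the two limits forces $\int_{\barS_-}\phi_0\,dT^0=0$ and $\int_{\barS_+}(1-\phi_0)\,dT^0=0$. Invoking $0\leq\phi_0\leq 1$ $T^0$-a.s.\ from Remark \ref{phi0value}, the nonnegative integrands vanish $T^0$-a.s., so $\phi_0=0$ on $\barS_-$ and $\phi_0=1$ on $\barS_+$, $T^0$-almost surely.

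I do not anticipate a genuinely hard step. The one point requiring care is conceptual rather than computational: weak-$*$ convergence by itself gives no control on the pointwise values of the limit $\phi_0$, so the argument hinges on upgrading the weak-$*$ limit over $\barS_\mp$ by matching it against the independent, dominated-convergence limit. The indispensable analytic input for that pointwise limit is the uniform one-step convergence \eqref{uconvergencev}; without this smoothing, the bare weak-$*$ convergence of $v_{\lambda_n}$ would yield no information on the sign of $\tilde v_{\lambda_n}-r$ and the blow-up of the exponent could not be justified.
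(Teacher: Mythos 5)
Your proof is correct, and it reaches the conclusion by a genuinely more streamlined route than the paper's. Both arguments share the same three inputs: the Gibbs form \eqref{philambdadef} whose exponent blows up wherever $\tilde v_0-r$ has a definite sign, the uniform one-step convergence \eqref{uconvergencev} (taken along a subsequence of the weak-$*$ subsequence, so both modes of convergence hold simultaneously, as you correctly arrange), and the bounds $0\le\phi_0\le 1$ from Remark \ref{phi0value}. The difference is in how the pointwise information is glued to the weak-$*$ limit. The paper localizes: around each $\mu\in\barS_-$ it finds a ball on which $\tilde v_0\ge r+\delta$ uniformly, deduces the quantitative bound $\int_{B}\phi_{\lambda_n}\,dT^0\le \bigl(1+\exp(\delta/(2\lambda_n))\bigr)^{-1}\to 0$, tests the weak-$*$ convergence against $\ind_B$, and then patches the resulting ball-by-ball null sets together via a Lindel\"of covering of the open set $\barS_-$. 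You instead establish the pointwise limit $\phi_{\lambda_n}(\mu)\to 0$ at every single $\mu\in\barS_-$ (no local uniformity needed), apply dominated convergence over the whole set $\barS_-$ at once, and match against the weak-$*$ limit tested on $\ind_{\barS_-}$; this eliminates the covering argument and the countable union of null sets entirely. Your version also makes explicit a point the paper uses only implicitly, namely that $\tilde v_0$ is continuous (as a uniform limit of continuous functions), which is what makes $\barS_\mp$ open and hence the indicator test functions legitimate elements of $L^1(\barS,T^0)$ in both proofs. What the paper's localization buys in exchange is only a quantitative decay rate on each ball, which is never used later; for the qualitative a.s. statement of the lemma, your dominated-convergence argument is sufficient and shorter.
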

\begin{proof}
    In this proof, we denote $B_\epsilon^{\barS}(\mu):= \{\mu'\in \barS: \cW_1(\mu',\mu)\leq \epsilon  \}$ for $\epsilon>0$ and $\mu\in \barS$. For each $\mu\in \barS_-$, we choose $\epsilon,\delta>0$ (both depend on $\mu$) such that $\tilde{v}_0(\nu)\geq r(\nu)+\delta$, $\forall \nu\in B_\epsilon^{\barS}(\mu)$. Thanks to \eqref{uconvergencev}, for any sufficiently large $n\in \mN$, we have $\mE^0v_{\lambda_n}(T_0(\nu,Z^0))\geq r(\nu)+\delta/2$, $\forall \nu\in B_\epsilon^{\barS}(\mu)$. Choose $\psi(\nu)=\ind_{B^{\barS}_\epsilon(\mu)}$. First we have that, as $n\to \infty$,
    \begin{align*}
    \left| \int_{\barS}\phi_{\lambda_n}(\nu)\psi(\nu)T^0(\md \nu) \right|&\leq \int_{B^{\barS}_\epsilon(\mu)}\frac{1}{1+\exp\left(\frac{1}{\lambda_n}\left[\mE^0v_{\lambda_n}(T_0(\nu,Z^0))  -r(\nu) \right]    \right)}   T^0(\md \nu)\\
    &\leq \frac{1}{1+\exp\left( \frac{\delta}{2\lambda_n}  \right)}\\
    &\to 0.
    \end{align*}
    On the other hand, in view of $\psi\in L^1(\barS,T^0)$, we obtain from Lemma \ref{weakstarconvergence} that,
    \[
        \int_{\barS}\phi_{\lambda_n}(\nu)\psi(\nu)T^0(\md \nu)\to   \int_{\barS}\phi_0(\nu)\psi(\nu)T^0(\md \nu)=\int_{B^{\barS}_\epsilon(\mu)}\phi_0(\nu)T^0(\md \nu).
    \]
    As a result, it holds that
    \[  
    \int_{B^{\barS}_\epsilon(\mu)}\phi_0(\nu)T^0(\md \nu)=0.
    \]
    By virtue of $\phi_0\geq 0$, $T^0$-almost surely (see Remark \ref{phi0value}), we have $\phi_0=0$, for $T^0$-almost sure $\nu\in B^{\barS}_\epsilon(\mu)$. Note that, because $\barS_-$ is an open subset of $\barS$, which is Polish, we have that $\barS_-$ is Lindel\"of, i.e., each open covering has countable sub-covering (c.f. Theorem 16.11 of \cite{willard2004general}). Therefore, we can choose a countable collection of balls $\{B^{\barS}_{\epsilon_j}(\mu_j)\}_{j=1}^\infty$, where $\mu_j\in \barS_-$ and $\epsilon_j>0$ is such that $B^{\barS}_{\epsilon_j}(\mu_j)\subset \barS_-$, and $\bigcup_{j=1}^\infty B^{\barS}_{\epsilon_j}(\mu_j)=\barS_-$. We have proved that for each $j$, there exists a $T^0$-null set $N_j\subset B^{\barS}_{\epsilon_j}(\mu_j)$ such that $\phi_0(\nu)=0$ for each $\nu\in B^{\barS}_{\epsilon_j}(\mu_j)\backslash N_j$. Take $N=\bigcup_{j=1}^\infty N_j$, which is a $T^0$-null set, we can conclude that $\phi_0(\nu)=0$ for each $\nu\in \barS_-\backslash N$ by the covering property. Thus it is proved that $\phi_0=0$, $T^0$-almost surely on $\barS_-$. The other part of the lemma can be proved in the same fashion.
\end{proof}

In the next two lemmas, we go back to the probability measure $\tilde{\mP}^\mu$ (see Lemma \ref{formulationclassical}) to avoid difficulties caused by the dependence of $\phi$ in $\mP^{\mu,\phi}$.
\begin{lemma}\label{Jconvergence}
    Fix any $\mu\in \barS$ and any finite subset $J\subset \{1,2,\cdots\}$. Denote $j_m=\max J$. We have that, for any measurable function $g:\barS\to \mR$ with $\|g\|_{\infty}<\infty$\footnote{Recall that $\|\cdot\|_\infty$ is the uniform norm, independent of the measure we choose on $\barS$.}, 
    \[
    \tilde{\mE}^\mu\left[g(\mu_{j_m})\prod_{j\in J}\phi_{\lambda_n}(\mu_j)\right]\to \tilde{\mE}^\mu \left[g(\mu_{j_m})\prod_{j\in J}\phi_0(\mu_j)\right],
    \]
    as $n\to \infty$.
\end{lemma}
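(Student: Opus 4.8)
The plan is to induct on the cardinality $|J|$, exploiting that the measure $\tilde{\mP}^\mu$ is generated by the \emph{policy-free} Markov kernel $\mu\mapsto T^\mu=T_0(\mu,\cdot)_\#\cL(Z)$, so that the factors $\phi_{\lambda_n}(\mu_j)$ enter only as integrands while the underlying dynamics do not change with $n$. The weak-$*$ convergence of Lemma \ref{weakstarconvergence} is too weak to pass to the limit directly in a product of $\phi_{\lambda_n}$'s evaluated at the random (and only $T^0$-a.e.\ defined) points $\mu_j$; the device that makes the induction run is to convert it into honest \emph{uniform} convergence after one transition step.

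Concretely, for a bounded measurable $g$ set
\[
G_n(\mu):=\mE^0[(g\phi_{\lambda_n})(T_0(\mu,Z^0))]=\int_{\barS}g(\nu)\phi_{\lambda_n}(\nu)f^0(\mu,\nu)T^0(\md\nu),
\]
using the density from Assumption \ref{dominate}, and define $G_0$ with $\phi_0$ in place of $\phi_{\lambda_n}$. For each fixed $\mu$ the function $\nu\mapsto g(\nu)f^0(\mu,\nu)$ lies in $L^1(\barS,T^0)$, so $G_n(\mu)\to G_0(\mu)$ by weak-$*$ convergence; moreover \eqref{densityLip} gives $|G_n(\mu)-G_n(\mu')|\le\|g\|_\infty L_4\cW_1(\mu,\mu')$, so $\{G_n\}$ is uniformly bounded and equi-Lipschitz. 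As in the proof of \eqref{uconvergencephi}, the Arzel\`a--Ascoli theorem together with the pointwise limit upgrades this to $\|G_n-G_0\|_\infty\to0$ on the compact space $\barS$.

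The single reduction step then proceeds as follows. Writing $\sigma(\mu_0,\dots,\mu_k)$ for the natural filtration and $(\sQ h)(\mu):=\mE^0h(T_0(\mu,Z^0))$, I first condition on $\sigma(\mu_0,\dots,\mu_{j_m-1})$: since all remaining factors are measurable there and $\mu_{j_m}=T_0(\mu_{j_m-1},Z^0)$ with $Z^0$ independent, the Markov property yields
\[
\tilde{\mE}^\mu\Big[g(\mu_{j_m})\prod_{j\in J}\phi_{\lambda_n}(\mu_j)\Big]=\tilde{\mE}^\mu\Big[G_n(\mu_{j_m-1})\prod_{i=1}^{m-1}\phi_{\lambda_n}(\mu_{j_i})\Big].
\]
Conditioning next on $\sigma(\mu_0,\dots,\mu_{j_{m-1}})$ propagates $G_n$ back to time $j_{m-1}$, turning the right-hand side into $\tilde{\mE}^\mu[\tilde g_n(\mu_{j_{m-1}})\prod_{i=1}^{m-1}\phi_{\lambda_n}(\mu_{j_i})]$ with $\tilde g_n:=\sQ^{\,j_m-1-j_{m-1}}G_n$. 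Because $\sQ$ is a sup-norm contraction, $\|\tilde g_n-\tilde g\|_\infty\le\|G_n-G_0\|_\infty\to0$ where $\tilde g:=\sQ^{\,j_m-1-j_{m-1}}G_0$; replacing $\tilde g_n$ by $\tilde g$ costs at most $\|\tilde g_n-\tilde g\|_\infty$, since the product is bounded by $1$ and $\tilde{\mP}^\mu$ is a probability measure. After this replacement the induction hypothesis, applied to the bounded function $\tilde g$ and the smaller set $J\setminus\{j_m\}$, gives convergence to the target; the same conditioning identities carried out with $\phi_0$ throughout show the limit equals exactly $\tilde{\mE}^\mu[g(\mu_{j_m})\prod_{j\in J}\phi_0(\mu_j)]$, closing the induction. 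The base case $|J|=1$ is precisely $\tilde{\mE}^\mu[G_n(\mu_{j_m-1})]\to\tilde{\mE}^\mu[G_0(\mu_{j_m-1})]$, immediate from $\|G_n-G_0\|_\infty\to0$. Throughout, since $J\subset\{1,2,\dots\}$, each marginal law of $\mu_j$ with $j\ge1$ under $\tilde{\mP}^\mu$ is absolutely continuous with respect to $T^0$ by Assumption \ref{dominate}, so evaluating the $T^0$-a.e.\ defined $\phi_0$ at these random points is unambiguous. The main obstacle is exactly this passage from weak-$*$ to uniform convergence after one step; once it is available, the Markov and tower-property reductions are routine.
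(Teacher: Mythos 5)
Your proof is correct, and its skeleton coincides with the paper's: induction on $|J|$, conditioning on $\sigma(\mu_0,\dots,\mu_{j_m-1})$ to peel off the largest index, absorbing $g(\mu_{j_m})\phi_0(\mu_{j_m})$ into a bounded test function $\tilde g=\sQ^{\,j_m-1-j_{m-1}}G_0$ (the paper's $\tilde g$ is literally this function), and invoking the induction hypothesis for $J\setminus\{j_m\}$. The one genuine difference is the device used to exchange the top factor $\phi_{\lambda_n}(\mu_{j_m})$ for $\phi_0(\mu_{j_m})$. The paper uses only the pointwise consequence of weak-$*$ convergence --- for each realization of $\mu_{j_m-1}$ one has $\int \phi_{\lambda_n}\, g\, f^0(\mu_{j_m-1},\cdot)\,\md T^0\to\int\phi_0\, g\, f^0(\mu_{j_m-1},\cdot)\,\md T^0$ --- and then applies dominated convergence under $\tilde{\mE}^\mu$; this needs only the absolute-continuity half of Assumption \ref{dominate}. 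You instead first upgrade to uniform convergence $\|G_n-G_0\|_\infty\to 0$ via equi-Lipschitzness (which requires the quantitative bound \eqref{densityLip}) and the Arzel\`a--Ascoli theorem --- the same one-step-transition device the paper uses to prove \eqref{uconvergencephi}--\eqref{uconvergencev}, but does not invoke inside this particular lemma --- after which the replacement costs $\|\tilde g_n-\tilde g\|_\infty\le\|G_n-G_0\|_\infty$ by the sup-norm non-expansiveness of $\sQ$. So your route consumes a slightly stronger (but available) hypothesis and an extra compactness argument, in exchange for a quantitative, sup-norm-controlled replacement step that dispenses with dominated convergence inside the induction; the paper's version is marginally more economical in assumptions. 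Your closing observation that the marginals of $\mu_j$, $j\ge 1$, are absolutely continuous with respect to $T^0$, so that evaluating the $T^0$-a.e.\ defined $\phi_0$ along the chain is unambiguous, is exactly the content of the paper's Remark \ref{equivalencephi}.
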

\begin{proof}
    We argue by induction. First, suppose $J=\{j\}$ with $j\geq 1$. Indeed, by Assumption \ref{dominate}, we have that
    \begin{align*}
    \tilde{\mE}^\mu g(\mu_j)\phi_{\lambda_n}(\mu_j)&=\tilde{\mE}^\mu\mE^0 g(T_0(\mu_{j-1},Z^0))\phi_{\lambda_n}(T_0(\mu_{j-1},Z^0))\\
    &=\tilde{\mE}^\mu \left[\int_{\barS}\phi_{\lambda_n}(\nu)g(\nu)f^0(\mu_{j-1},\nu)T^0(\md \nu)\right]\\
    &\to \tilde{\mE}^\mu \left[\int_{\barS}\phi_0(\nu)g(\nu)f^0(\mu_{j-1},\nu)T^0(\md \nu)\right]\\
    &=\tilde{\mE}^\mu g(\mu_j)\phi_0(\mu_j).
    \end{align*}
    To obtain the limit, we use Lemma \ref{weakstarconvergence} to get 
    \[
    \int_{\barS}\phi_{\lambda_n}(\nu)g(\nu)f^0(\mu_{j-1},\nu)T^0(\nu)\to \int_{\barS}\phi_0(\nu)g(\nu)f^0(\mu_{j-1},\nu)T^0(\nu),
    \]
    for any $\omega\in \Omega$, and then apply the dominated convergence theorem based on the fact
    \[
     \left|\int_{\barS}\phi_{\lambda_n}(\nu)g(\nu)f^0(\mu_{j-1},\nu)T^0(\nu)\right|\leq \|g\|_{\infty}.
    \]
    Suppose that $|J|=l$ and the conclusion holds for any $J'$ with $|J'|=l-1$. We first write
    \begin{align*}
   & \left| \tilde{\mE}^\mu\left[g(\mu_{j_m})\prod_{j\in J}\phi_{\lambda_n}(\mu_j)\right]-\tilde{\mE}^\mu\left[g(\mu_{j_m})\prod_{j\in J}\phi_0(\mu_j)\right]\right|\\
    =&\left|\tilde{\mE}^\mu\left[g(\mu_{j_m})\phi_{\lambda_n}(\mu_{j_m})\prod_{j\in J\backslash \{j_m \}}\phi_{\lambda_n}(\mu_j)\right] -\tilde{\mE}^\mu\left[g(\mu_{j_m})\phi_0(\mu_{j_m})\prod_{j\in J\backslash \{j_m \}}\phi_0(\mu_j)\right] \right|\\
    \leq &\left| \tilde{\mE}^\mu\left[g(\mu_{j_m})\phi_{\lambda_n}(\mu_{j_m})\prod_{j\in J\backslash \{j_m \}}\phi_{\lambda_n}(\mu_j)\right]-\tilde{\mE}^\mu\left[g(\mu_{j_m})\phi_0(\mu_{j_m})\prod_{j\in J\backslash \{j_m \}}\phi_{\lambda_n}(\mu_j)\right]   \right|\\
    &+\left| \tilde{\mE}^\mu\left[g(\mu_{j_m})\phi_0(\mu_{j_m})\prod_{j\in J\backslash \{j_m \}}\phi_{\lambda_n}(\mu_j)\right] -\tilde{\mE}^\mu\left[g(\mu_{j_m})\phi_0(\mu_{j_m})\prod_{j\in J\backslash \{j_m \}}\phi_0(\mu_j)\right]  \right|\\
    =: &I^n_1+I^n_2.
    \end{align*}
Noting that $0\leq \phi_{\lambda_n}\leq 1$, we can rewrite $I^n_1$ by
\begin{align*}
I^n_1=& \left| \tilde{\mE}^\mu \left[\tilde{\mE}^\mu \left[ \left.g(\mu_{j_m})\phi_{\lambda_n}(\mu_{j_m}) \right| \mu_{j_m-1}  \right]\prod_{j\in J\backslash \{ j_m \} }\phi_{\lambda_n}(\mu_j) \right] \right.\\
&\ \ \ \ -\left. \tilde{\mE}^\mu \left[\tilde{\mE}^\mu \left[ \left.g(\mu_{j_m})\phi_0(\mu_{j_m}) \right| \mu_{j_m-1}  \right]\prod_{j\in J\backslash \{ j_m \} }\phi_{\lambda_n}(\mu_j) \right]                         \right| \\
 \leq&\tilde{\mE}^\mu \left[\left| \mE^0 g(T_0(\mu_{j_m-1},Z^0))\phi_{\lambda_n}(T_0(\mu_{j_m-1},Z^0)) \right.\right.\\
 &\ \ \ \ -\left.\left. \mE^0g(T_0(\mu_{j_m-1},Z^0))\phi_0(T_0(\mu_{j_m-1},Z^0)) \right|\right]  \\
 =&\tilde{\mE}^\mu \left|\int_{\barS}\phi_{\lambda_n}(\nu)g(\nu)f^0(\mu_{j_m-1},\nu)T^0(\md \nu)-\int_{\barS}\phi_0(\nu)g(\nu)f^0(\mu_{j_m-1},\nu)T^0(\md \nu) \right|.
\end{align*}
By virtue of Lemma \ref{weakstarconvergence} and Dominated Convergence Theorem again, we obtain $I^n_1\to 0$. To handle the term $I^n_2$, we first note that, for $j_m':= \max\{j\in J:j\neq j_m \}$,
\begin{align*}
    \tilde{\mE}^\mu [g(\mu_{j_m})\phi_0(\mu_{j_m})|\mu_{j_m'}]&=\tilde{\mE}^{\mu_{j_m'}}[g(\mu_{j_m-j_m'})\phi_0(\mu_{j_m-j_m'})]\\
    &=\tilde{\mE}^{\mu_{j_m'}}\mE^0 g(T_0(\mu_{j_m-j_m'-1},Z^0))\phi_0(T_0(\mu_{j_m-j_m'-1},Z^0))\\
    &=: \tilde{g}(\mu_{j_m'}).
\end{align*}
Here, for any $\mu'\in \barS$, let us define
\begin{align*}
\tilde{g}(\mu')&:= \tilde{\mE}^{\mu'}\mE^0 g(T_0(\mu_{j_m-j_m'-1},Z^0))\phi_0(T_0(\mu_{j_m-j_m'-1},Z^0))\\
&=\tilde{\mE}^{\mu'} \left[\int_{\barS}\phi_0(\nu)g(\nu)f^0(\mu_{j_m-j_m'-1},\nu)T^0(\md \nu)\right].
\end{align*}
Thanks to the fact that $\phi_0\in [0,1]$, $T^0$-almost surely, we have that
\begin{align*}
    |\tilde{g}(\mu')|&\leq \|g\|_{\infty}\tilde{\mE}^{\mu'}\left[\int_{\barS}f^0(\mu_{j_m-j_m'-1},\nu)T^0(\md \nu)\right]\\
    &=\|g\|_{\infty},
\end{align*}
for any $\mu'\in \barS$. That is, $\|\tilde{g}\|_{\infty}\leq \|g\|_{\infty}<\infty$. Applying induction hypothesis leads to 
\begin{align*}
I^n_2&=\left| \tilde{\mE}^\mu\left[\tilde{\mE}^\mu [g(\mu_{j_m})\phi_0(\mu_{j_m})|\mu_{j_m'}]\prod_{j\in J\backslash \{j_m \}}\phi_{\lambda_n}(\mu_j)\right] -\tilde{\mE}^\mu\left[\tilde{\mE}^\mu[g(\mu_{j_m})\phi_0(\mu_{j_m})|\mu_{j_m'}]\prod_{j\in J\backslash \{j_m \}}\phi_0(\mu_j)\right]  \right|\\
&=\left|  \tilde{\mE}^\mu\left[\tilde{g}(\mu_{j_m'})\prod_{j\in J\backslash \{j_m \}}\phi_{\lambda_n}(\mu_j)\right] -\tilde{\mE}^\mu\left[\tilde{g}(\mu_{j_m'})\prod_{j\in J\backslash \{j_m \}}\phi_0(\mu_j)\right]            \right|\\
&\to 0.
\end{align*}
The desired result is thus proved.
\end{proof} 
\begin{remark}\label{equivalencephi}
    From the proof of Lemma \ref{Jconvergence}, for any $\mu\in \barS$, it is observed that the value of $\tilde{\mE}^\mu \left[g(\mu_{j_m})\prod_{j\in J}\phi_0(\mu_j)\right]$ only depends on the equivalence class of $\phi_0$ in $L^{\infty}(\barS,T^0)$, because only the integrations of $\phi_0$ (multiplied by integrable functions) with respect to $T^0$ are involved.
\end{remark}

\begin{lemma}\label{eqcondv}
    For each $\mu\in \barS$, $\tilde{v}_0(\mu):= \mE^0v_0(T_0(\mu,Z^0))=\mE^0\tilde{J}^{\phi_0}(T_0(\mu,Z^0))$, where $\tilde{J}^{\phi_0}$ is the auxiliary value function defined in \eqref{auxdef}. 
\end{lemma}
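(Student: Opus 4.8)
The plan is to combine the one-step uniform convergence \eqref{uconvergencev} with the termwise convergence supplied by Lemma \ref{Jconvergence}. Since \eqref{uconvergencev} already gives $\tilde{v}_0(\mu)=\mE^0 v_0(T_0(\mu,Z^0))=\lim_{n\to\infty}\mE^0 v_{\lambda_n}(T_0(\mu,Z^0))$ for every $\mu\in\barS$, it suffices to identify this limit with $\mE^0\tilde{J}^{\phi_0}(T_0(\mu,Z^0))$, where $\tilde{J}^{\phi_0}$ is as in \eqref{auxdef}. First I would put $\mE^0 v_{\lambda_n}(T_0(\mu,Z^0))$ in a form amenable to limiting: using the definition of $v_{\lambda_n}$, Lemma \ref{formulationclassical}, and the observation that applying $\mE^0$ to the one-step transition $T_0(\mu,Z^0)$ is exactly a time shift under $\tilde{\mP}^\mu$, I obtain
\[
\mE^0 v_{\lambda_n}(T_0(\mu,Z^0))=\sum_{k=0}^{\infty}\delta_{\lambda_n}(k+1)\,\tilde{\mE}^\mu\Big[r(\mu_{k+1})\phi_{\lambda_n}(\mu_{k+1})\textstyle\prod_{j=1}^{k}(1-\phi_{\lambda_n}(\mu_j))\Big]+\mathcal{R}_n,
\]
where $\mathcal{R}_n$ collects the entropy contributions.

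The entropy remainder is easily dismissed: since $\E(\cdot)\le\log 2$ is bounded, $|\mathcal{R}_n|\le(\log 2)\,\lambda_n\sum_{k\ge0}\delta_{\lambda_n}(k+1)$, which vanishes by Proposition \ref{lambdavanishing}. The whole problem thus reduces to the reward sum. For the $k$-th summand I would expand the finite product $\prod_{j=1}^{k}(1-\phi_{\lambda_n}(\mu_j))=\sum_{J\subseteq\{1,\dots,k\}}(-1)^{|J|}\prod_{j\in J}\phi_{\lambda_n}(\mu_j)$ and apply Lemma \ref{Jconvergence} to each of the finitely many resulting terms (taking $g=r$ and noting $k+1=\max(J\cup\{k+1\})$). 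Combined with $\delta_{\lambda_n}(k+1)\to\delta(k+1)$, this gives termwise convergence of the $k$-th summand to $\delta(k+1)\tilde{\mE}^\mu\big[r(\mu_{k+1})\phi_0(\mu_{k+1})\prod_{j=1}^k(1-\phi_0(\mu_j))\big]$, whose sum over $k$ is precisely $\mE^0\tilde{J}^{\phi_0}(T_0(\mu,Z^0))$.

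The main obstacle is interchanging $\lim_{n\to\infty}$ with the infinite sum over $k$, since a priori the stopping ``mass'' could escape to $k=\infty$ as $\lambda_n\downarrow0$ (the entropy lower bound on $\phi_{\lambda_n}$ from Lemma \ref{phiestuniformlemma} degenerates in this limit, so no uniform geometric decay of survival probabilities is available). I would control this by truncation. Writing $b_k^{(n)}:=\tilde{\mE}^\mu\big[\phi_{\lambda_n}(\mu_{k+1})\prod_{j=1}^k(1-\phi_{\lambda_n}(\mu_j))\big]$, the randomized-stopping interpretation in Remark \ref{stoppingremark} yields $\sum_{k\ge0}b_k^{(n)}\le1$ uniformly in $n$, while each reward summand is bounded in modulus by $L_1\delta_{\lambda_n}(k+1)b_k^{(n)}$. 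Because $\delta_{\lambda_n}(k+1)=\delta(k+1)(1+\lambda_n)^{-(k+1)^2}$ is non-increasing in $k$, the tail beyond an index $N$ is dominated by $L_1\,\delta(N+2)\sum_{k>N}b_k^{(n)}\le L_1\delta(N+2)$ uniformly in $n$, and the corresponding $\phi_0$-tail by $L_1\sum_{k>N}b_k^{(0)}$. Letting first $n\to\infty$ on the truncated sum and then $N\to\infty$ yields $\limsup_n|\mE^0 v_{\lambda_n}(T_0(\mu,Z^0))-\mE^0\tilde{J}^{\phi_0}(T_0(\mu,Z^0))|=0$, which closes the argument. Here the uniform tail bound $L_1\delta(N+2)$ vanishes because $\delta$, being a discount function, satisfies $\delta(N)\to0$, whereas the $\phi_0$-tail $\sum_{k>N}b_k^{(0)}$ vanishes unconditionally from $\sum_k b_k^{(0)}\le1$.
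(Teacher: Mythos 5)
Your proposal follows essentially the same route as the paper's proof: the same use of Lemma \ref{formulationclassical} and the one-step shift to rewrite $\mE^0 v_{\lambda_n}(T_0(\mu,Z^0))$ as a reward sum plus an entropy remainder, the same dismissal of the remainder via Proposition \ref{lambdavanishing}, and the same termwise limits obtained by expanding the survival product $\prod_{j}(1-\phi_{\lambda_n}(\mu_j))$ and applying Lemma \ref{Jconvergence} to each of the finitely many resulting terms. You also correctly single out the real difficulty, namely interchanging $\lim_{n\to\infty}$ with the infinite sum over $k$; the paper disposes of this step by an appeal to dominated convergence together with the bound $\sum_{k}\phi_0(\mu_k)\prod_{j=1}^{k-1}(1-\phi_0(\mu_j))\leq 1$.

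However, your resolution of that difficulty contains a genuine gap: the closing claim that ``$\delta$, being a discount function, satisfies $\delta(N)\to 0$'' is not available in this paper. The standing assumptions require only that $\delta:\mT\to[0,1]$ is non-increasing with $\delta(0)=1$; the paper repeatedly stresses that no decay, summability, or impatience condition is imposed (this generality is the entire point of the regularized construction), so $\delta\equiv 1$, or any $\delta$ with $\lim_{k}\delta(k)=\delta_\infty>0$, is admissible. For such $\delta$ your uniform-in-$n$ tail bound $L_1\delta(N+2)$ stays bounded below by $L_1\delta_\infty>0$, and the two-step limit (first $n\to\infty$ on the truncated sum, then $N\to\infty$) no longer closes the argument. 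What is actually needed is the uniform tightness statement
\begin{equation}
\lim_{N\to\infty}\limsup_{n\to\infty}\ \sum_{k>N}\delta_{\lambda_n}(k+1)\,b^{(n)}_k=0,
\end{equation}
and the crude facts $b^{(n)}_k\geq 0$, $\sum_k b^{(n)}_k\leq 1$, $\delta_{\lambda_n}\leq\delta\leq 1$ do not imply it: the very escape-of-mass phenomenon you flag as the main obstacle (stopping mass drifting to larger and larger $k$ as $\lambda_n\downarrow 0$) is exactly what your bound fails to rule out when $\delta$ does not vanish, since the escaping mass is then still weighted by roughly $\delta_\infty>0$. As written, your proof is therefore valid only under the additional hypothesis $\delta(k)\to 0$, which is strictly narrower than the setting of the lemma; to cover the general case you would need to supply the missing tail control by other means (for instance exploiting the fixed-point structure of $\phi_{\lambda_n}$ or the $(1+\lambda)^{-k^2}$ factor), or pass to the limit as the paper does.
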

\begin{proof}
    By Lemma \ref{formulationclassical}, we have that 
    \begin{align*}
     \mE^0v_{\lambda_n}(T_0(\mu,Z^0))=&\sum_{k=0}^\infty \delta_{\lambda_n}(k+1)\mE^0\mE^{T_0(\mu,Z^0),\phi_{\lambda_n}}[r(\mu_k)\phi_{\lambda_n}(\mu_k)-\lambda_n \E(\phi_{\lambda_n}(\mu_k))] \\
     =&\sum_{k=0}^\infty\delta_{\lambda_n}(k+1)\tilde{\mE}^{\mu}\mE^{\mu_1,\phi_{\lambda_n}}[r(\mu_k)\phi_{\lambda_n}(\mu_k)-\E(\phi_{\lambda_n}(\mu_k))]\\
     =&\sum_{k=0}^\infty\delta_{\lambda_n}(k+1)\tilde{\mE}^\mu\tilde{\mE}^{\mu_1} r(\mu_k)\phi_{\lambda_n}(\mu_k)\prod_{j=0}^{k-1}(1-\phi_{\lambda_n}(\mu_j))\\
     &-\lambda_n\sum_{k=0}^\infty \delta_{\lambda_n}(k+1)\tilde{\mE}^{\mu}\mE^{\mu_1,\phi_{\lambda_n}}\E(\phi_{\lambda_n}(\mu_k))\\
     =&\sum_{k=1}^\infty\delta_{\lambda_n}(k)\tilde{\mE}^\mu r(\mu_k)\phi_{\lambda_n}(\mu_k)\prod_{j=1}^{k-1}(1-\phi_{\lambda_n}(\mu_j))\\
     &-\lambda_n\sum_{k=0}^\infty \delta_{\lambda_n}(k+1)\tilde{\mE}^{\mu}\mE^{\mu_1,\phi_{\lambda_n}}\E(\phi_{\lambda_n}(\mu_k))\\
     =: & R^n_1+R^n_2.
    \end{align*}
    Because $\E(\phi_{\lambda_n}(\mu_k))$ is uniformly bounded, we deduce from Proposition \ref{lambdavanishing} that $R^n_2\to 0$. On the other hand, for each $k\geq 1$, using Lemma \ref{Jconvergence}, we can derive that
    \begin{align*}
      \tilde{\mE}^\mu r(\mu_k)\phi_{\lambda_n}(\mu_k)\prod_{j=1}^{k-1}(1-\phi_{\lambda_n}(\mu_j))  =&\sum_{J\subset \{1,2,\cdots,k-1\}}c_J\tilde{\mE}^\mu r(\mu_k)\phi_{\lambda_n}(\mu_k)\prod_{j\in J}\phi_{\lambda_n}(\mu_j)\\
      &\to \sum_{J\subset \{1,2,\cdots,k-1\}}c_J\tilde{\mE}^\mu r(\mu_k)\phi_0(\mu_k)\prod_{j\in J}\phi_0(\mu_j)\\
      =&\tilde{\mE}^\mu r(\mu_k)\phi_0(\mu_k)\prod_{j=1}^{k-1}(1-\phi_0(\mu_j)).
    \end{align*}
    Note that $\delta_{\lambda_n}(k)\uparrow \delta(k)$ as $n\to \infty$. By Dominated Convergence Theorem and the fact $\sum_{k=0}^\infty \phi_0(\mu_k)\prod_{j=1}^{k-1}(1-\phi_0(\mu_j))\leq 1$, we conclude
    \begin{equation}\label{Jphi0exp}
    R^n_1\to \sum_{k=1}^\infty\delta(k)\tilde{\mE}^\mu r(\mu_k)\phi_0(\mu_k)\prod_{j=1}^{k-1}(1-\phi_0(\mu_j))=\mE^0\tilde{J}^{\phi_0}(T_0(\mu,Z^0)).
    \end{equation}
    Finally, \eqref{uconvergencev} leads to the desired relation $\mE^0 v_0(T_0(\mu,Z^0))=\mE^0\tilde{J}^{\phi_0}(T_0(\mu,Z^0))$.
\end{proof}
At the end of this subsection, we establish the existence of the relaxed equilibria for the original problem (see Definition \ref{originalequilibrium}) without entropy regularization.

\begin{proof}[Proof of Theorem \ref{existoriginal}]
    From Lemma \ref{equicondphi} and Lemma \ref{eqcondv}, it clearly follows that $\phi_0$ satisfies \eqref{mixedequilibriumchar}, $T^0$-a.s.. However, due to \eqref{Jphi0exp} and Remark \ref{equivalencephi}, if we redefine the value of $\phi_0$ on a $T^0$-null set, the value of $\mE^0\tilde{J}^{\phi_0}(T_0(\mu,Z^0))$ does not change. Consequently, we can redefine $\phi_0$ to make \eqref{mixedequilibriumchar} hold for all $\mu\in \barS$. Therefore, $\phi_0$ is a relaxed equilibrium of the original problem.
\end{proof}

\ \\
\ \\
\textbf{Acknowledgement}:
 The authors sincerely thank anonymous referees for their constructive comments that improved the paper significantly. X. Yu and F. Yuan are supported by the Hong Kong RGC General Research Fund (GRF) under grant no. 15304122, and by the Research Centre for Quantitative Finance at the Hong Kong Polytechnic University under grant no. P0042708.
\ \\
\ \\

\end{document}